\titleformat*{\section}{\Large\bfseries}
\titleformat{\subsection}[runin]{\large\bfseries}{\thesubsection.}{.5em}{}[.]\titlespacing{\subsection}{0pt}{2ex plus .1ex minus .2ex}{.8em}
\titleformat{\subsubsection}[runin]{\normalfont\bf}{\thesubsubsection.}{.3em}{}[.]\titlespacing{\subsubsection}{0pt}{1ex plus .1ex minus .2ex}{.5em}
\titleformat{\paragraph}[runin]{\normalfont\itshape}{\theparagraph.}{.3em}{}[.]\titlespacing{\paragraph}{0pt}{1ex plus .1ex minus .2ex}{.5em}
\definecolor{darkred}{rgb}{0.9,0,0.3}
\definecolor{darkblue}{rgb}{0,0.3,0.9}
\definecolor{vdarkred}{rgb}{0.7,0,0.2}
\definecolor{vdarkblue}{rgb}{0,0.2,0.7}
\numberwithin{equation}{section}
\numberwithin{figure}{section}
\theoremstyle{plain} 
\newtheorem{theorem}{Theorem}[section]
\newtheorem*{theorem*}{Theorem}
\newtheorem{lemma}[theorem]{Lemma}
\newtheorem*{lemma*}{Lemma}
\newtheorem{corollary}[theorem]{Corollary}
\newtheorem*{corollary*}{Corollary}
\newtheorem{proposition}[theorem]{Proposition}
\newtheorem*{proposition*}{Proposition}
\newtheorem*{conjecture*}{Conjecture}
\theoremstyle{definition} 
\newtheorem*{definition*}{Definition}
\newtheorem*{example*}{Example}
\newtheorem{remark}[theorem]{Remark}
\newtheorem*{remark*}{Remark}
\newtheorem*{assumption*}{Assumption}
\renewcommand{\b}[1]{\boldsymbol{\mathrm{#1}}} 
\newcommand{\bb}{\mathbb} 
\renewcommand{\cal}{\mathcal}
\newcommand{\ul}[1]{\underline{#1} \!\,} 
\newcommand{\ol}[1]{\overline{#1} \!\,} 
\newcommand{\E}{\mathbb{E}}
\newcommand{\R}{\mathbb{R}}
\newcommand{\C}{\mathbb{C}}
\newcommand{\N}{\mathbb{N}}
\newcommand{\e}{\mathrm{e}}
\newcommand{\G}{\widetilde{G}}
\renewcommand{\i}{\hat{\imath}}
\renewcommand{\j}{\hat{\jmath}}
\renewcommand{\k}{\hat{k}}
\renewcommand{\l}{\hat{l}}
\newcommand{\ii}{\mathrm{i}}
\newcommand{\dd}{\mathrm{d}}
\newcommand*{\deq}{\mathrel{\vcenter{\baselineskip0.65ex \lineskiplimit0pt \hbox{.}\hbox{.}}}=}
\newcommand*{\eqd}{=\mathrel{\vcenter{\baselineskip0.65ex \lineskiplimit0pt \hbox{.}\hbox{.}}}}
\renewcommand{\leq}{\leqslant}
\renewcommand{\le}{\leqslant}
\renewcommand{\geq}{\geqslant}
\renewcommand{\epsilon}{\varepsilon}
\newcommand{\psum}{\sideset{}{^*}\sum}
\DeclareMathOperator{\tr}{Tr}
\DeclareMathOperator{\var}{Var}
\DeclareMathOperator{\supp}{supp}
\DeclareMathOperator{\im}{Im}
\newcommand*{\rom}[1]{\expandafter\@slowromancap\romannumeral #1@}
\title{\bf \Large Edge universality of sparse Erd\H{o}s-R\'{e}nyi digraphs\vspace{0.5em}}
\author{Yukun He\footnote{Shanghai Center for Mathematical Sciences, Fudan University.
		 Email:	\href{mailto:heyukun@fudan.edu.cn}{heyukun@fudan.edu.cn}. Supported by National Key R\&D Program of China
		No.\,2023YFA1010400 and NSFC No.\,12322121.}
	\vspace{1em}
}
\begin{document}
\maketitle

\begin{abstract}
\vspace{0.2cm}
Let $\cal A$ be the adjacency matrix of the Erd\H{o}s-R\'{e}nyi directed graph $\mathscr G(N,p)$. We denote the eigenvalues of $\cal A$ by $\lambda_1^{\cal A},...,\lambda^{\cal A}_N$, with $|\lambda_1^{\cal A}|=\max_i|\lambda_i^{\cal A}|$. For $N^{-1+o(1)}\leq p\leq 1/2$, we show that
\[
\max_{i=2,3,...,N} \bigg|\frac{\lambda_i^{\cal A}}{\sqrt{Np(1-p)}}\bigg| =1+O(N^{-1/2+o(1)})
\]
with very high probability. In addition, we prove that near the unit circle, the local eigenvalue statistics of $\cal A/\sqrt{Np(1-p)}$  coincide with those of the real Ginibre ensemble. As a by-product, we also show that all non-trivial eigenvectors of $\cal A$ are completely delocalized.

For Hermitian random matrices, it is known that the edge statistics are sensitive to the sparsity: in the very sparse regime, one needs to remove many noise random variables (which affect both the mean and the fluctuation) to recover the Tracy-Widom distribution \cite{EKYY1,EKYY2,LS1,HLY,HK20,Lee21,HY22}. Our results imply that, compared to their analogues in the Hermitian case, the edge statistics of non-Hermitian sparse random matrices are more robust.

\end{abstract}

\section{Introduction}

In this paper, we consider the Erd\H{o}s-R\'enyi directed graph $\mathscr G(N,p)$, i.e.\,a directed graph on $N$ vertices, and each edge is included in the graph with probability $p$, independently from every other edge. We denote the adjacency matrix of $\mathscr G(N,p)$ by $\cal A$. It is easy to see that $\cal A\in \{0,1\}^{N\times N}$ is a random matrix with independent entries satisfying 
\[
\cal A_{ij}=\begin{cases}
	1 \quad \mbox{with probability $p$}\\
	0 \quad \mbox{with probability $1-p$} 
\end{cases}
\]
for all $i,j$. For numerical convenience, we introduce the normalized matrix
\begin{equation}  \label{1.1}
A=\sqrt{\frac{1}{Np(1-p)}}\,\cal A
\end{equation}
so that $\var({A}_{ij})=N^{-1}$. From the circular law\cite{Bai97,TV10,wood,RT19,BR19,GT10}, we know that when $pN\gg 1$, the limiting spectral density of ${A}$ converges to the uniform law on the unit disc of the complex plane.

One of the most important questions for the Erd\H{o}s-R\'enyi ensemble is to study its extreme
eigenvalue statistics. Since the entries of the adjacency matrix have positive expectations, its largest eigenvalue (in magnitude) is very large and far away from the rest of the spectrum. We are therefore interested in the probability distribution of the eigenvalues of A near the unit disc.

The matrix ${A}$ has typically $N^2p$ nonzero entries, and in the regime $p\asymp1$, ${A}$ is a \emph{dense matrix}, as a nontrivial portion of its entries are not zero. Under the four moment matching condition, it was proved in \cite{TV5} that the local statistics of a dense non-Hermitian random matrix coincide with those of the Ginibre ensemble. This is known as the \emph{universality of non-Hermitian random matrices}. Without the four moment matching condition, the local circular law was proved for matrices with uniform variance profile in\cite{BYY14,BYY142,Y1}, and for matrices with general variance profile in \cite{AEK18}. The spectral radius of non-Hermitian random matrices was determined in \cite{AEK5}. Near the spectral edge, the universality of non-Hermitian random matrices was established in \cite{CEK3}.

In the regime $p\ll 1$, which is more interesting in terms of graphs, the majority of the entries of ${A}$ are 0. In other words, ${A}$ is a \emph{sparse matrix}. For sparse non-Hermitian random matrices, there is by far no result on the local eigenvalue statistics. In this paper, we prove the edge universality for ${A}$ in the whole regime $N^{-1+o(1)}\leq p\leq 1/2$. 

For a square matrix $S\in \bb C^{N\times N}$ with eigenvalues $\lambda^S_1,...,\lambda^S_N$, we define its $k$-point correlation function $p_{k}^S$ through
\begin{equation} \label{1.2}
	\int_{\bb C^k} F(z_1,...,z_k) p_{k}^S(z_1,...,z_k) \dd z_1 \, \dots \dd z_k= {N \choose k}^{-1}\bb E \psum_{i_1,...,i_k=1}^{N} F(\lambda^S_{i_1},...,\lambda^S_{i_k})\,,
\end{equation}
for any smooth compactly supported $F: \bb C^k \to \bb C$, and $\sum^*$ is shorthand for distinct sum. For the real Ginibre ensemble $W$, and $\b w=(w_1,...,w_k)$, $\b z=(z_1,...,z_k)\in \bb C^k$, the microscopic scaling limit of $p_k^W$ exists such that
\[
\lim_{N\to \infty} p_k^W\Big(w_1+\frac{z_1}{N^{1/2}},...,w_k+\frac{z_k}{N^{1/2}}\Big)\eqd {\mathrm p}_{\b w}(\b z)\,.
\]
For the detailed formula of ${\mathrm p}_{\b w}(\b z)$, one may refer to \cite{BS2009}. We may now state our first main result.

\begin{theorem} \label{main theorem}
Fix $\tau>0$ and assume $p\in [N^{-1+\tau},1/2]$. Fix $k\in \bb N_+$. Let $\b w =(w_1,...,w_k)\in \bb C^k$ with $|w_1|=\cdots=|w_k|=1$. Then for any smooth compactly supported $F: \bb C^k \to \bb C$, we have
\[
\lim_{N \to \infty}   \int_{\bb C^k} F(\b z)\bigg[p_k^A\Big(\b w+\frac{\b z}{N^{1/2}}\Big)-\mathrm{p}_{\b w}(\b z)\bigg] \dd \b z=0\,.
\]
\end{theorem}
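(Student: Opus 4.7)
The plan is to proceed via Girko's Hermitization, which translates eigenvalue statistics of $A$ into singular value statistics of the shifts $A-z$. For a test function $F$ as in the theorem, Green's identity on $\bb C$ gives
\begin{equation*}
\frac{1}{N}\sum_{i=1}^{N} F\pb{N^{1/2}(\lambda_i^A - w_j)} = \frac{1}{4\pi N}\int_{\bb C}\Delta_z F\pb{N^{1/2}(z-w_j)}\,\log\absb{\det(A-z)}^2 \,\dd m(z),
\end{equation*}
where $m$ denotes Lebesgue measure on $\bb C$. The log-determinant is accessed through the Hermitization
\begin{equation*}
H^z = \begin{pmatrix} 0 & A-z \\ (A-z)^* & 0 \end{pmatrix}, \qquad G^z(\zeta) = (H^z-\zeta)^{-1},
\end{equation*}
and the identity $\log\abs{\det(A-z)} = -\tfrac{1}{2}\int_0^\infty \im \tr G^z(\ii\eta)\,\dd\eta + \text{const}$. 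So the task reduces to comparing this Girko functional between $A$ and a matching real Ginibre matrix $W$, uniformly in $z$ in a small disc around each $w_j$, and then appealing to the known edge statistics for $W$.

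The next step would be to establish an optimal local law for $G^z$. When $|z|$ is close to $1$, the self-consistent density $\rho^z$ on the Hermitized side vanishes at the origin with a cubic cusp, $\rho^z(x)\asymp|x|^{1/3}$; this is exactly the cusp singularity highlighted by the author. I would aim to prove an entrywise local law for $G^z(\ii\eta) - m^z(\ii\eta)\,I$ down to $\eta \gg N^{-1}$ using a cumulant expansion for the Bernoulli entries. In the sparse regime the higher cumulants of $A_{ij}$ are large inverse powers of $p$ and potentially dangerous; however, the cubic suppression of $\rho^z$ near $0$ should yield extra factors of $\eta^{1/3}$ in the estimates and make the expansion close for all $p \geq N^{-1+\tau}$. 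This is how I would interpret the author's first novelty.

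Given the local law in the cusp regime, the global Girko integral $\int_0^\infty \dd\eta$ still nominally requires control at macroscopic scales $\eta \sim 1$, which is where the effects of the large entrywise mean $p/\sqrt{Np(1-p)}$ are most visible. The author's second novelty suggests circumventing this by integrating by parts in $\eta$ to transfer the $\eta$-integration onto the test-function side of the Girko identity; the remaining spectral information is then needed only at small $\eta$, where the cusp estimates apply. For the third novelty, rather than centering the matrix and dealing with its rank-one perturbation, I would apply a Bernoulli integration-by-parts directly inside the Girko integral; the resulting terms proportional to $\bb E A_{ab}$ generate new products of resolvent entries whose expectations satisfy the \emph{same} self-consistent hierarchy, evaluated at shifted arguments, and this self-similarity should let them cancel between $A$ and $W$ to the precision needed for Theorem~\ref{main theorem}.

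The hardest step, I expect, is the optimal local law for $G^z$ in the sparse setting at the cusp, uniformly in $z$ in a small neighbourhood of the unit circle. Because the Girko formula multiplies $\tr G^z(\ii\eta)$ against $\Delta_z F(N^{1/2}(z-w_j))$, whose Laplacian carries a prefactor of order $N$, any error in the local law must beat $N^{-1}$ after integration against $\dd m(z)\,\dd\eta$; this forces the fluctuation-averaging estimates to be sharp at the very boundary of their applicability, simultaneously in the spectral parameter $\eta$, the cusp scale, and the sparsity $p$. A secondary challenge is verifying that the cumulant hierarchy produced by Bernoulli integration by parts really does close up in the self-similar way the introduction promises; the rank-one mean contribution is structurally different from the fluctuation contribution, and making them match against the Ginibre hierarchy after matching second moments is a delicate combinatorial task that appears to be the genuine technical heart of the paper.
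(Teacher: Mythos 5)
Your skeleton does match the paper's at a high level (Girko Hermitization, a cusp-aided local law proved by cumulant expansion, integration by parts to avoid large spectral scales, comparison with Ginibre), but three load-bearing steps are missing or would fail as described. First, the Girko integral runs all the way down to $\eta=0$, and a local law valid only for $\eta\gg N^{-1}$ says nothing about $\log|\det(A-z)|$ there: you need a lower bound on the least singular value of $A-z$, uniformly over a net of shifts $z$, which the paper imports from the small-ball estimate of \cite{TV08} (see \eqref{345}), together with a bound on the expected number of singular values below $N^{-3/4-\delta}$ --- and that count is itself obtained by comparison with the Ginibre side, not from the local law. Second, at the edge the optimal local law only gives $|\ul{\G}-m|\prec (N\eta)^{-1}$, so on the critical window $\eta\asymp N^{-3/4}$ the Girko functional is merely $O_\prec(1)$; universality cannot be read off from the local law alone. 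The actual smallness comes from a quantitative Green-function comparison along the interpolating flow $B(t)$ (Lemma \ref{lemma5.3}, leading to \eqref{7.16}), whose proof relies on the index-matching/self-similar resolvent expansions and the isotropic bounds of Lemma \ref{lemma4.7}; your ``Bernoulli integration by parts inside the Girko integral'' gestures at this but supplies no mechanism by which the hierarchy closes in the sparse regime.

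Third, you cannot compare $A$ directly with $W$ ``after matching second moments'': $A$ carries the rank-one mean $f\,\b e\b e^*$ with $f\asymp q\to\infty$, which both detaches one eigenvalue and injects terms such as $\sum_\alpha \G_{\alpha\j}$ into every self-consistent equation. The paper deals with this by (a) isotropic-type estimates (Proposition \ref{prop5.2} (ii), Lemma \ref{lemma4.7}) in which the largeness of $f$ is turned into a gain by dividing the self-consistent relation by $f$, and (b) a two-step comparison: the flow interpolates between $A$ and $W+f\b e\b e^*$, and orthogonal invariance of $W$ identifies the latter spectrally with $W+f\b e_1\b e_1^*$, which is then compared to $W$ by a rank-one resolvent computation (Lemma \ref{lemma5.4}). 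You explicitly flag this mean contribution as ``the genuine technical heart'' but leave it unresolved, so the proposal as written does not yield the theorem. A minor correction: the integration by parts that avoids large $\eta$ is in the shift $w$ against the test function, converting $\nabla^2 f_{w_*}$ into $\partial_{\bar w}f_{w_*}$ and then using $\partial_w\ul{\G}=\frac{\ii}{2N}\sum_i\partial_\eta\G_{i'i}$ so that the $\eta$-integral is evaluated exactly, leaving a single boundary term at $\eta^*$ controlled by Proposition \ref{prop5.1}; it is not an integration by parts in $\eta$.
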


Theorem \ref{main theorem} shows that the edge universality of $\mathscr{G}(N,p)$ holds as long as the expected degree of each vertex has a polynomial growth in $N$. On the other hand, its Hermitian analogue is not true. Let $ A^H$ be the (rescaled) adjacency matrix of the undirected Erd\H{o}s-R\'enyi graph $\mathscr G^H(N,p)$ with normalization $\var(A^H_{ij})=N^{-1}$. In \cite{EKYY1,EKYY2,LS1}, it was proved that for $Np \geq N^{1/3+o(1)}$, the second largest eigenvalue of $A^H$ satisfies
\begin{equation}
	N^{2/3}(\lambda_2^H-\bb E \lambda_2^H)\overset{d}{\longrightarrow} \mathrm{TW}_1\,,
\end{equation}
where TW$_1$ is the Tracy-Widom distribution for GOE \cite{TW1,TW2}. For $N^{o(1)}\leq Np \leq N^{1/3-o(1)}$, it was shown in \cite{HLY,HK20} that 
\[
\sqrt{\frac{N^2p}{2}}(\lambda_2^H-\bb E \lambda_2^H)\overset{d}{\longrightarrow} \cal N(0,1)\,.
\]
In other words, there is a phase transition at $Np\asymp N^{1/3}$, and the edge universality fails if $Np \leq N^{1/3-o(1)}$. It was later observed in \cite{Lee21,HY22} that when $Np\asymp N^{\varepsilon}$, there are $\Omega(\e^{1/\varepsilon})$ number of noise random variables that outscale the Tracy-Widom distribution. These noise terms affect both the mean and fluctuation of $\lambda^H_2$. For small $\varepsilon$, there is by far no efficient way to calculate them explicitly, and even $\bb E \lambda_2^H$ is not precisely known.

In this paper, we say an event $\Omega$ holds with very high probability if for any $D>0$, $1-\bb P(\Omega)=O_D(N^{-D})$. Our second main result proves the optimal rigidity estimate of the spectral radius of $A$, as well as the complete delocalization of the eigenvectors.

\begin{theorem} \label{radius and delocalization}
Fix $\tau>0$ and assume $p\in [N^{-1+\tau},1/2]$. Let $\lambda_1,\lambda_2,...,\lambda_N$ be the eigenvalues of $A$ with $|\lambda_1|=\max_i |\lambda_i|$.
\begin{enumerate}
	\item For any fixed $\varepsilon>0$, we have
	\begin{equation} \label{rigidity}
	\max_{ 2\leq i \leq N} |\lambda_i|=1+O(N^{-1/2+\varepsilon})
	\end{equation}
with very high probability.	
\item Suppose $\b u\in \bb C^{N}$ satisfies $A \b u=\lambda\b  u$ for some $\lambda \in \bb C$ with $|\lambda|\leq 2$. Then for any fixed $\varepsilon>0$, we have $\|\b u\|_\infty =O (N^{-1/2+\varepsilon}\|\b u\|)$ with very high probability. 
\end{enumerate}

\end{theorem}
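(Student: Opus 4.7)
The plan is to Hermitize and reduce both parts of Theorem \ref{radius and delocalization} to a quantitative local law for the resolvent
\[
G^z(w)\deq (H^z-w)^{-1},\qquad H^z\deq \begin{pmatrix}0 & A-z\\ (A-z)^* & 0\end{pmatrix}.
\]
Specifically, I would build the proof on an averaged and isotropic local law of the shape
\[
\max_{a,b}\bigl|G^z_{ab}(\ii\eta)-M^z_{ab}(\ii\eta)\bigr|\prec N^{\varepsilon}(N\eta)^{-1/2},
\]
valid uniformly for $|z|\leq 2$ and $\eta\geq N^{-1+o(1)}$, together with a coarse smallest-singular-value bound $\sigma_{\min}(A-z)\geq N^{-K}$ on a polynomial grid of points $z$ with $|z|\in[1/2,2]$. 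The local law is the substance of the paper and is where the three novelties in the abstract enter: the cusp of $\im m^z$ at the origin when $|z|=1$ is turned into an \emph{improving} factor in the self-consistent stability analysis, integration by parts inside Girko's formula avoids tracking the Green function at mesoscopic scales, and self-similarity of the self-consistent equations absorbs the large diagonal expectation $\bb E A_{ij}=\sqrt{p/(N(1-p))}$.

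For part (i), I would combine Girko's Hermitization with Green's identity. For any smooth cutoff $f\colon\bb C\to \bb R$ supported in $\{1+N^{-1/2+\varepsilon}\leq |z|\leq 2\}$,
\[
\sum_i f(\lambda_i) \;=\; \frac{1}{4\pi}\int_{\bb C}(\Delta f)(z)\,\log\det\bigl((A-z)^*(A-z)\bigr)\,\dd^2 z,
\]
and one represents
\[
\log\det\bigl((A-z)^*(A-z)\bigr) \;=\; \log\det\bigl((A-z)^*(A-z)+T^2\bigr) \;-\; \int_0^T \im\tr G^z(\ii\eta)\,\dd\eta.
\]
On $\eta\in [N^{-1+\varepsilon},T]$ the local law replaces $\im\tr G^z(\ii\eta)$ by $N\im m^z(\ii\eta)$ at cost $\prec \eta^{-1}$; the deterministic integral contributes the expected count, namely zero to leading order for $|z|>1$, and the remaining fluctuation is controlled by the integration-by-parts trick, avoiding mesoscopic Green function estimates. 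The tail $\eta<N^{-1+\varepsilon}$ is handled by the $\sigma_{\min}$ bound on the grid together with Lipschitz continuity of $z\mapsto \log\det(A-z)$. Choosing $f\equiv 1$ on a small disc around a putative eigenvalue $\lambda_i$ with $|\lambda_i|\geq 1+N^{-1/2+\varepsilon}$ produces a contradiction, and a union bound over the grid covering $\{1+N^{-1/2+\varepsilon}\leq |z|\leq 2\}$ finishes the critical annulus. The far annulus $\{2\leq |z|\leq \sqrt{Np}/2\}$ is settled by the crude circular-law-type bound $\|A-\bb E A\|\leq C$.

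For part (ii), suppose $A\b u=\lambda\b u$ with $\|\b u\|=1$. Then $\wt{\b u}\deq \frac{1}{\sqrt 2}\begin{pmatrix}0 \\ \b u\end{pmatrix}\in\bb C^{2N}$ is a unit null-eigenvector of $H^\lambda$, and for every $i\in\qq{1,N}$ and $\eta>0$ the spectral theorem gives
\[
\im G^\lambda_{N+i,N+i}(\ii\eta) \;=\; \sum_k \frac{\eta\,|\phi_k(N+i)|^2}{\mu_k^2+\eta^2} \;\geq\; \frac{|u_i|^2}{2\eta}.
\]
Taking $\eta=N^{-1+\varepsilon}$ and bounding the left-hand side by the isotropic local law, $\im G^\lambda_{N+i,N+i}(\ii\eta)\leq |M^\lambda_{N+i,N+i}(\ii\eta)|+N^{-\varepsilon/2}=O(1)$, yields $|u_i|^2=O(N^{-1+\varepsilon})$ uniformly in $i$, which is the claimed complete delocalization.

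The chief obstacle is clearly the local law itself. Three features conspire against standard techniques: sparsity on the scale $p\sim N^{-1+\tau}$, so that entrywise fluctuations are of order $(Np)^{-1/2}\gg N^{-1/2}$; the nonzero entrywise mean, giving a rank-one perturbation far larger than the spectral scale; and the cusp in $\im m^z$ at $E=0$ when $|z|=1$, all the way down to $\eta\sim N^{-1+o(1)}$ required for the sharp $N^{-1/2}$ rigidity window. In the Hermitian sparse edge setting this combination forces the removal of many subleading noise random variables and destroys Tracy-Widom for $Np\leq N^{1/3-o(1)}$; here, the cusp-as-improvement, the integration-by-parts inside Girko's formula, and the self-similarity of the self-consistent equations together make the non-Hermitian problem robust enough that the local law survives down to $p\sim N^{-1+\tau}$. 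Once that local law is in place, parts (i) and (ii) follow as outlined above.
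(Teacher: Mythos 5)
Your part (ii) argument coincides with the paper's: Hermitize, observe that $\begin{pmatrix}\b 0\\ \b u\end{pmatrix}$ is a null vector of $\widetilde{H}_\lambda$, use the spectral decomposition lower bound $\im\G_{i'i'}(\ii\eta)\geq |u_i|^2/\eta$, and combine with the entrywise local law at $\eta=N^{-1+\varepsilon}$. (A small slip: $\begin{pmatrix}\b 0\\ \b u\end{pmatrix}$ already has unit norm, so the normalization $\tfrac{1}{\sqrt2}$ should be dropped; it does not affect the conclusion.) Your part (i) only addresses the upper bound of \eqref{rigidity}. The statement $\max_{2\leq i\leq N}|\lambda_i|=1+O(N^{-1/2+\varepsilon})$ is two-sided, and the lower bound --- that at least one non-trivial eigenvalue sits within $N^{-1/2+\varepsilon}$ of the unit circle --- is genuinely needed and is not touched in your proposal. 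In the paper this lower bound is extracted from a local circular law at edge scale (Theorem \ref{thmcircularlaw}), which itself relies on the full Girko machinery, the integration-by-parts trick, Proposition \ref{prop5.1}, and the small ball probability input \cite{TV08}; none of this is a formal consequence of an entrywise local law, and your outline does not give a substitute.

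For the upper bound you also take a different, more expensive route. The paper's proof (Proposition \ref{prop4.5} and Corollary \ref{cor4.7}) is a direct counting argument: it proves a \emph{stronger} averaged law $\ul{\G}-m\prec (N^{1+\nu}\eta)^{-1}$ for $|w|\geq 1+N^{-1/2+\delta}$, which beats the naive $(N\eta)^{-1}$, and then the observation that a zero eigenvalue of $\widetilde{H}_w$ would force $\im\ul{\G}(\ii\eta)\geq \tfrac{1}{2N\eta}$ gives a contradiction after a union bound over a grid of $w$'s; the far region $|w|\geq \delta^{-1}$ is handled by $\|B\|=O(1)$. Your route via Girko's formula, Green's identity and a contradiction with a test function supported in the annulus would require the unnormalized error in the log-potential representation to be $o(1)$ for test functions at scale $N^{-1/2}$; this is not implied by the entrywise shape $(N\eta)^{-1/2}$ you posit (which only gives an averaged error of order $(N\eta)^{-1}$, i.e.\ a $T_{j,2}$-type bound that is $O_\prec(1)$, not $o(1)$). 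In other words, the upper bound also hinges on the extra power of $N^{-\nu}$ in the outside local law --- a point your outline does not identify --- and the Girko-formula reformulation, while natural, does not remove this requirement.
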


\begin{remark} \label{rmk3.1}
To simplify the presentation, we assume that all matrix elements of $A$ have identical variance
$1/N$. As in \cite{EYY1,EKYY1}, one may however easily generalize this condition and require that the diagonal elements of $A$
vanish. Thus one may for instance consider Erd\H{o}s-R\'enyi digraphs in which a vertex cannot link to itself.
\end{remark}

The main results imply that the edge statistics of Erd\H{o}s-R\'enyi digraphs are very robust: with the simple rescaling \eqref{1.1}, both the spectral radius and extreme eigenvalue fluctuations coincide with those of the real Ginibre ensemble. The phenomenon that non-Hermitian random matrices have more regular edge statistics than Hermitian matrices has also been observed in the literature. For instance, the convergence of spectral radius of non-Hermitian random matrices only requires the existence of the second moment \cite{BCG21,BCCT,BZ20}; in the Hermitian case, in order to have the convergence of the extreme eigenvalues, stronger conditions are needed both for sparse matrices \cite{ADK,BBK1,TY21} and for matrices with $\alpha$-stable entries \cite{LY}.

\subsection{Proof outline and new ideas}
\subsubsection*{The benefits of cusp singularity} 
Comparing to the dense case, the main obstacle in the sparse regime is the slow decay of the higher order terms. Our \textit{first key novelty} is turning another well-known obstacle in non-Hermitian matrices, cusp singularity, to our advantage. This completely eliminates the sparsity contribution near the edge.

More precisely, let $\widetilde{H}_w\in \bb C^{2N\times 2N}$ be the shifted Hermitization of $A$ defined in \eqref{Her} below. We study the spectrum $\{\lambda_1,...,\lambda_N\}$ of $A$ via Girko's Hermitization formula
\begin{equation} \label{1.5}
	\frac{1}{N}\sum_i f_{w_*}(\lambda_i)=\frac{\ii }{4\pi N}\int_{\bb C}  \int_0^{\infty} \nabla^2 f_{w_*}(w) \tr (\widetilde{H}_w-\ii \eta)^{-1} \dd \eta\, \dd^2 w\,,
\end{equation} 
where $f\in C_c^\infty(\bb C)$ is fixed, $f_{w_*}(w)\deq Nf(N^{1/2}(w-w_*))$ and $|w_*|=1$. The main step is thus to analyze the Stieltjes transform of $H_w$, namely
\[
\widetilde{g}\deq \frac{1}{2N} \tr \G\,, \quad \mbox{where} \quad\G\equiv \G_w(\ii \eta)\deq (\widetilde{H}_w-\ii\eta)^{-1}\,.
\]
The quantity $\widetilde{g}$ is expected to be close to a deterministic $m\equiv m(w,\ii \eta)$, where $m$ is the solution of 
\[
P(m)\deq m^3+2\ii \eta m^2+(1-\eta^2-|w|^2)m+\ii \eta=0
\] 
with $\im m>0$. It can be shown that
\[
\widetilde{g}-m=O\bigg(\frac{P(\widetilde{g})}{P'(m)}\bigg)\,,
\] 
and the key to understanding \eqref{1.5} is to get a good estimate of $P(\widetilde{g})$. Comparing to the bulk case, the local law near the unit circle is known to possess extra difficulties, due to the \textit{cusp singularity} \cite{EKS2020,AEK5}, i.e.
\begin{equation*} 
 P'(m)=3m^2+4\ii \eta m +(1-\eta^2-|w|^2)\asymp |1-|w||+\eta^{2/3}\,.
\end{equation*}
When $w$ is near the unit circle, the self-consistent equation is highly unstable, which requires a very precise bound of $P(\widetilde{g})$. The smallness of $P'(m)$ origins from that of $m$, i.e.\begin{equation} \label{smallm}
m=\ii \im m=O\big(|1-|w||^{1/2}+\eta^{1/3}\big)\,.
\end{equation}

Our observation here is that the cusp singularity, in particular \eqref{smallm}, can in fact help us on estimating higher order terms in the sparse regime. For instance, when we compute $\bb E P(\widetilde{g})$ via cumulant expansion (Lemma \ref{cumulant}), we get
\begin{equation} \label{cuexpand}
\bb E P(\widetilde{g})=O(N\cal C_4(A_{12})\bb E\widetilde{g}^3)+O(N\cal C_6(A_{12})\bb E\widetilde{g}^5)+\dots+\mbox{error}\,,
\end{equation}
where $\cal C_{k}$ denotes the $k$-th cumulant (here due to the algebraic structure of the Green function, the terms associated with odd cumulants are always small enough). Thanks to \eqref{smallm}, we have
\[
N\cal C_4(A_{12})\bb E\widetilde{g}^3\approx N\cal C_4(A_{12})m^3=O\bigg(\frac{|1-|w||^{3/2}+\eta}{Np}\bigg)=O\bigg(P'(m)\frac{|1-|w||^{1/2}+\eta^{1/3}}{Np}\bigg)\,.
\]
Thus this term will not affect the estimate of $\widetilde{g}-m$. In addition, the second term on RHS of \eqref{cuexpand} is even smaller, due to the increasing power of $\widetilde{g}$. The same type of smallness also occurs when we compute $\bb E |P(\widetilde{g})|^n$, which suggests that the fluctuation of $\widetilde{g}$ is also insensitive to the sparsity. As a result, it turns out that the cusp singularity is an \textit{advantage} rather than an \textit{obstacle} when studying sparse non-Hermitian matrices.

Let us make a comparison with the Hermitian case. We denote the Stieltjes transform of $A^H$ by $g^H\deq N^{-1} \tr (A^H-z)^{-1}$. It is known that $g^{H}$ can be approximated by the Stieltjes transform $m^H$ of the semicircle density, which satisfies
\[
P^H(m^H)\deq 1+zm^H+(m^H)^2=0 \,, \quad \im m^H>0\,.
\]
If we compute $\bb E P^H(g^H)$, we get
\[
\bb E P^H(g^H)=O(N\cal C_4(A^H_{12}))\bb E (g^H)^4)+O(N\cal C_6(A^H_{12})\bb E (g^H)^6)+\cdots+\mbox{error}\,.
\]
In the Hermitian case, the real part of the Stieltjes transform is no longer small near the spectral edge. Instead, we have $|m^H|\asymp 1$, and $N\cal C_4(A^H_{12}))\bb E (g^H)^4\asymp N^{-1}p^{-1}$, which is not negligible for small $p$. In fact, when $p$ is close to $N^{-1}$, we also observe nonomittable fluctuation terms, and they all needed to be added into $P^H$ to form a new self-consistent equation of $g^H$. We refer the readers to \cite{Lee21,HY22} for more details.

Another difficulty in the sparse regime is to utilize the contributions of (more than 2) off-diagonal entries of $\widetilde{G}$. To this end, we prove a generalized Ward identity (Lemma \ref{lemma4.4}), whose proof relies on the fact that the sum of Green functions preserves its form under differentiation (see \eqref{4q10}).

By exploring the cusp-singularity and performing careful estimates of $\bb E |P|^n$ (Proposition \ref{prop4.1}), it can be shown that near the unit circle, we have
\begin{equation} \label{1.9}
	|\widetilde{g}-m| \leq C N^{\varepsilon}\Big(\frac{1}{N\eta}+\frac{\eta^{1/3}}{Np}\Big)
\end{equation}
with very high probability. Thus for $\eta \leq N^{-3/4}$, we get the optimal estimate $|\widetilde{g}-m| =O (N^{-1+\varepsilon}\eta^{-1})$ with very high probability, regardless of the value of $p$.

\subsubsection*{Integration by-parts for the shift variable}
Our \textit{second key novelty} is the use of integration by-parts inside Girko's Hermitization, namely
\[
\int\int\nabla^2 f_{w_*}(w) \widetilde{g}\, \dd \eta \dd^2 w=-4\int\int \partial_{\bar{w}} f_{w_*}(w) \partial_w \widetilde{g} \,\dd \eta \dd^2w\,.
\]
This is a crucial step in our proof: \eqref{1.5} requires the understanding of $\widetilde{g}$ for all $\eta>0$, while the unimprovable bound \eqref{1.9} is  only sufficient for $\eta\leq N^{-3/4}$. In other words, estimates through original Hermitization will be too large for us. To this end, we write
\begin{equation} \label{1.10}
\begin{aligned}
&\,\int_{\bb C} \int_{N^{-3/4}}^{\infty} \nabla^2 f_{w_*}(w) \widetilde{g}\, \dd \eta \dd^2 w=-4\int_{\bb C} \int_{N^{-3/4}}^{\infty} \partial_{\bar{w}} f_{w_*}(w) \partial_w \widetilde{g} \,\dd \eta \dd^2w\\
=&\,-\frac{2\ii }{N}\int_{\bb C}\int_{N^{-3/4}}^\infty \partial_{\bar{w}} f_{w_*} \sum_{i=1}^N \partial_\eta\G_{i+N,i} \,\dd \eta \dd^2 w=\frac{2\ii}{N} \int_{\bb C}  \partial_{\bar{w}} f_{w_*} \sum_{i=1}^N \G_{i+N,i} (\ii N^{3/4})\,\dd^2 w
\end{aligned}
\end{equation}
where in the second step we used $\partial_w \widetilde{g}=\frac{1}{2N}\sum_{i}(\G^2)_{i+N,i}=\frac{\ii}{2N} \sum_{i}\partial_\eta \G_{i+N,i}$. We introduce \eqref{1.10} basing on two observations. Trivially, as $\|\nabla^2 f_{w_*}\|_1=O(N)$ and $\|\partial_{w}f_{w_*}\|_1=O(N^{1/2})$, the use of integration by parts improves the estimate by a factor of $N^{-1/2}$. In addition, we are able to prove that 
$$
\bigg|\frac{1}{N}\sum_{i=1}^N \G_{i+N,i} (\ii N^{3/4})+\frac{1+m^2}{w}\bigg|\leq N^{-1/2+\varepsilon}\,,
$$
with very high probability (see Proposition \ref{prop5.1}). As a comparison, the trace only satisfies $ |\widetilde{g}(\ii N^{-3/4})-m|\leq N^{-1/4+\varepsilon}$.  In other words, after integration by parts, the Green function satisfies a stronger large deviation estimate. The combination these observations allows us to avoid the treatment of $\widetilde{g}-m$ at larger spectral scales. 

We remark that the idea of integration by parts inside Girko's Hermitization and estimating the derivatives of the Green function w.r.t.\,the shift has the potential to be applied to other problems. For instance, it is later used in \cite{CEX23} to study the distribution of spectral radius in the dense case.

\subsubsection*{The non-zero expectation}
Our \textit{third key novelty} is a self-similarity of the self-consistent equations of certain Green functions, which wipes out the effect of large expectations of the matrix entries.

Up to this point, we have not considered the fact that $A$ has positive expectations, and $\widetilde{H}_w$ is a rank-two perturbation of its centered version $H_w$ \eqref{Her}. As a result, there are in fact additional terms in the estimate of $P(\widetilde{g})$, e.g.
\begin{equation} \label{1.11}
\bb EA_{12}\cdot\sum_{i=1}^N \sum_{\alpha=N+1}^{2N}\G_{i\alpha}\,,
\end{equation}
(see also \eqref{5.23}). As $\bb EA$ is large, it is challenging to eliminate the effect of \eqref{1.11} for edge statistics. In fact, even for the Hermitian model (i.e.\,undirected Erd\H{o}s-R\'enyi graphs), the Tracy-Widom law was only obtained for the centered matrix when $p\leq N^{-2/3}$ \cite{HLY,HY22}.

To deal with this problem, we make use of the fact the rank-two perturbation is close to a projection, and this allows us to form new self-consistent equations for \eqref{1.11}, which yields the desired estimate for the expectation terms (see Proposition \ref{prop5.2}(ii) and Lemma \ref{lemma4.7}). The method presented here also applies to the Hermitian case.

\subsubsection*{Comparisons with Gaussian models} The above steps, together with the small ball probability estimate \cite{TV08}, allow one to prove Theorem \ref{radius and delocalization} as well as
\[
\frac{1}{N}\sum_i f_{w_*}(\lambda_i)-\frac{1}{\pi}\int_{|w|\leq 1} f_{w_*}(w) \dd^2w =O(N^{\varepsilon})
\]
with very high probability. To establish the edge universality, we need to study \eqref{1.5} near the critical regime $\eta \sim N^{-3/4}$ in more detail. To achieve this, we use the approach of Green function flow \cite{LS1,CEK3}. Here we again face the issue that $A$ is not centered. We solve it by using a two-step comparison. More precisely, let $W$ denote the real Ginibre ensemble. We first compare $\widetilde{H}_w$ and the Hermitization of  $W+\bb EA$ (Lemma \ref{lemma5.3}). We then compare the Hermitizations of $W$ and $W'=W+N\bb E A_{12} (1,0,...,0)^*(1,0,...,0)$, and conclude the proof with the fact that $W+\bb EA$ and $W'$ have the same distribution. In the comparison step we also make use of the isotropic estimate Lemma \ref{lemma4.7}.

\subsection*{Organization}

The paper is organized as follows. In Section \ref{sec4} we introduce the notations used in this paper. In Section \ref{sec3} we exploit the cusp fluctuation and prove strong local law for the centered model $H_w$ near the edge (Theorem \ref{thmHstrong}). In Section \ref{sec4.4}, for the non-centered matrix $\widetilde{H}_w$, we obtain entrywise local law in the whole spectrum (Theorem \ref{theoremA}) and strong local law outside the spectral domain (Proposition \ref{prop4.5}). These results also establish the upper bound in Theorem \ref{radius and delocalization} (i) as well as Theorem \ref{radius and delocalization} (ii). In Section \ref{sec5} we prove Theorem \ref{main theorem}. In addition, we prove a local law for $A$ near the edge (Theorem \ref{thmcircularlaw}), which yields the lower bound of Theorem \ref{radius and delocalization} (i).

\subsection*{Conventions} Unless stated otherwise, all quantities depend on the fundamental large parameter $N$, and we omit this dependence from our notation. We use the usual big $O$ notation $O(\cdot)$, and if the implicit constant depends on a parameter $\alpha$ we indicate it by writing $O_\alpha(\cdot)$. For random variables $X$ (possibly complex) and $Y \geq0$, we write $X \prec Y$, or equivalently $X=O_{\prec}(Y)$, if for any fixed $\varepsilon,D>0$,
\[
\bb P(|X|\geq Y N^{\varepsilon}) =O_{\varepsilon,D} (N^{-D})\,.
\]
We write $X\asymp Y$ if $X =O(Y)$ and $Y=O(X)$. We write $X \ll Y$ to mean $X=O_\varepsilon(YN^{-\varepsilon})$ for some fixed $\varepsilon>0$.

\subsection*{Acknowledgment}
The author would like to thank FromSoftware and Miyazaki Hidetaka for the hospitality in Yharnam.

\section{Notations and preliminaries} \label{sec4}
Let $A$ be defined as in \eqref{1.1}. For the rest of this paper, we use the parameters
\[
q\deq \sqrt{Np(1-p)} \quad \mbox{and} \quad \xi \deq \log_N 2q\,.
\]
We always assume $\xi\in [\tau/2,1/2]$ with $\tau$ defined as in Theorem \ref{main theorem}. We denote the centered adjacency matrix $B$ by $B\deq A-\bb EA$. It is easy to see that
$$
A = B + f \b e \b e^*\,,
$$
where $\b e \deq N^{-1/2}(1,1,\dots,1)^*$, and $f\asymp q$. We have $\var(B_{ij})=N^{-1}$ and 
\begin{equation} \label{2.1}
	\bb E |B_{ij}|^k=O_k(N^{-1}q^{-k+2})
\end{equation}
uniformly for all $i,j$ and $k\geq 3$. 

For $w \in \bb C$, we define the shifted Hermitizations of $B$ and $A$ by
\begin{equation} \label{Her}
\quad H_w\deq
\begin{pmatrix}
0&  B-w\\
B^*-\bar{w}&0
\end{pmatrix}\,, \quad \mbox{and} \quad \widetilde{H}_w\deq
\begin{pmatrix}
0&  A-w\\
A^*-\bar{w}&0
\end{pmatrix} \,.
\end{equation}
In addition, we abbreviate
\begin{equation} \label{H}
\quad H\deq H_0 \quad \mbox{and} \quad \kappa\deq ||w|-1|\,.
\end{equation}
 For $z=E+\ii \eta$ and $\eta>0$, we define the Green functions by
\begin{equation} \label{2.4}
G\equiv G_w(z)\deq  (H_w-z)^{-1}  \quad \mbox{and} \quad \widetilde{G}\equiv \widetilde{G}_w(z)\deq  (\widetilde{H}_w-z)^{-1}\,.
\end{equation}
We have the resolvent identities
\[
I+G(z-H_w)=I+(z-H_w)G=0 \quad \mbox{and} \quad I+\G(z-\widetilde{H}_w)=I+(z-\widetilde{H}_w)\G=0\,.
\]

In the sequel, we use the convention that the indices satisfy
\[
i,j,k,...\in \{1,2,...N\}\,, \quad\alpha,
\beta,\gamma...\in \{N+1,...,2N\}\quad \mbox{and} \quad\i,\j,\k, ,... \in \{1,2,...,2N\}\,.
\]
As a result,
\[
\sum_i\equiv\sum_{i=1}^N\,,\quad \sum_{\alpha}\equiv\sum_{\alpha=N+1}^{2N}\quad \mbox{and} \quad \sum_{\i}\equiv \sum_{\i=1}^{2N}\,.
\]
Furthermore, we set $\i'=(\i+N-1$  ({mod} $2N$))+1. In particular, $i'\deq i+N$ and  $\alpha'\deq \alpha-N$. We have the abuse of notations
\[
\sum_{i} f(i,i')\equiv \sum_{i=1}^N f(i,i+N) \quad \mbox{and}\quad \sum_{\alpha} f(\alpha',\alpha)\equiv \sum_{\alpha=N+1}^{2N}f(\alpha-N,\alpha)\,.
\]
We abbreviate
\begin{equation*}
\partial_{\i\j} F\deq \frac{\partial F}{\partial H_{\i\j}}
\end{equation*}
for differentiable functions $F$ of $H_w$ and $\widetilde{H}_w$. It is easy to see that
\begin{equation} \label{diff}
	\partial_{\i\j} G_{\k\l}=-G_{\k\i}G_{\j\l}-G_{\k\j}G_{\i\l} \quad \mbox{and}\quad \partial_{\i\j} \widetilde{G}_{\k\l}=-\widetilde{G}_{\k\i}\widetilde{G}_{\j\l}-\widetilde{G}_{\k\j}\widetilde{G}_{\i\l}
\end{equation}
whenever $H_{\i\j}\not\equiv0$. For a square matrix $Q \in \bb R^{n\times n}$ of any size, we denote its normalized trace by 
\begin{equation} \label{trace}
	\ul{Q}\deq n^{-1}\tr Q\,.
\end{equation}
 Let us denote the limiting density law of $H_w$ by $\varrho_w$. We denote the Stieltjes transform of $\varrho_w$ at $z$ by $m\equiv m(w,z)$. It satisfies
\begin{equation}\label{mmm}
-\frac{1}{m}=z+m-\frac{|w|^2}{z+m}\,,\quad \im m>0\,.
\end{equation}
We also define
\[
\frak m\equiv \frak m(w,z)\deq -\frac{m}{z+m}\,.
\]
The deterministic approximation of $G$ is defined by $M\equiv M(w,z)\in \bb C^{2N\times 2N}$, which satisfies
\begin{equation} M\deq
	\begin{pmatrix}
		mI_{N}& w\frak mI_{N}\\
		\bar{w}\frak mI_{N}&mI_{N}
	\end{pmatrix}\,,
\end{equation}
where $I_N$ is the identity matrix in $\bb R^{N\times N}$. The next lemma collects some elementary facts whose proofs we omit. 

\begin{lemma}  \label{lemma4.11} 
	\begin{enumerate}  
		\item We have the Ward identity
		\begin{equation} \label{ward}
			\sum_{\i}|G_{\i\j}|^2=\frac{\im G_{\j\j}}{\eta}\leq \frac{|G_{\j\j}- m|+\im m}{\eta}\,.
		\end{equation}
	
	\item We have
	\begin{equation} \label{sumialpha}
		\sum_i G_{ii}=\sum_\alpha G_{\alpha\alpha}\,,
	\end{equation}
	and for $z=\ii \eta$,
	\[
	G_{i\alpha}=\overline{G_{\alpha i}}\,, \quad G_{ij}=-\overline{ G_{ji}}\,, \quad \mbox{and} \quad G_{\alpha\beta}=-\ol{G_{\beta\alpha}}\,.
	\]
	
	\item Parts (i) and (ii) remain valid when we replace $G$ by $\widetilde{G}$.
	
	\item For $z=\ii \eta$ and $0< \eta\leq 1$, the quantity $m\equiv m(w, \ii\eta)$ satisfy
	\begin{equation} \label{um}
		m=\ii \im m\asymp \begin{cases}
			\kappa^{1/2}+\eta^{1/3} \quad &\mbox{if} \quad |w|\leq 1\\
			\tfrac{\eta}{\kappa+\eta^{2/3}} \quad &\mbox{if} \quad |w|>1\,.
		\end{cases} 	
	\end{equation}
	\end{enumerate}
\end{lemma}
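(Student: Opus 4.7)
The plan is to handle the four items in order, since each reduces to a standard structural computation.

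For (i), I would write
\[
\sum_{\i}|G_{\i\j}|^2 = (G^*G)_{\j\j} = \tfrac{1}{\eta}\im G_{\j\j}
\]
using the resolvent identity $G-G^* = 2\ii\eta\, G^*G$ (valid for any Hermitian $H_w$ and $z=E+\ii\eta$); the follow-up bound is just $\im G_{\j\j} = \im(G_{\j\j}-m)+\im m \leq |G_{\j\j}-m|+\im m$. Part (iii) requires no separate argument: $\widetilde{H}_w$ has the same Hermitian $2\times 2$ block structure as $H_w$, so the arguments for (i) and (ii) transfer verbatim after substituting $A$ for $B$.

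For (ii), I would exploit the block structure. Setting $X=B-w$ and applying Schur complement to $H_w-\ii\eta$ shows that the two diagonal blocks of $G(\ii\eta)$ are
\[
\ii\eta\,(\eta^2+XX^*)^{-1} \qquad \text{and}\qquad \ii\eta\,(\eta^2+X^*X)^{-1}.
\]
Since $X$ is square, $XX^*$ and $X^*X$ have identical spectra with multiplicity, so their traces coincide, giving $\sum_i G_{ii}=\sum_\alpha G_{\alpha\alpha}$. For the three conjugation identities at $z=\ii\eta$, I introduce $J=\diag(I_N,-I_N)$. The block structure yields $JH_wJ=-H_w$, hence $JG(\ii\eta)J=-G(-\ii\eta)$; combined with the Hermiticity relation $G(\ii\eta)^*=G(-\ii\eta)$, one reads off
\[
\overline{G_{\i\j}(\ii\eta)} = G_{\j\i}(-\ii\eta) = -J_{\i\i}J_{\j\j}\,G_{\j\i}(\ii\eta),
\]
and the three cases (both indices in the upper block, both in the lower block, or one of each) produce exactly the claimed signs.

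For (iv), I substitute the ansatz $m=\ii s$ with $s>0$ into \eqref{mmm} at $z=\ii\eta$. The equation becomes real and reduces to the cubic
\[
s(\eta+s)^2 = \eta + s(1-|w|^2),
\]
so any root with $\im m>0$ is automatically purely imaginary. One checks a posteriori that $s\gg\eta$ holds throughout the relevant range (indeed $s\geq \eta^{1/3}\gg\eta$), so the equation simplifies at leading order to $s^3 + s(|w|^2-1)\asymp \eta$. For $|w|\leq 1$ this reads $s^3\asymp \eta + s\kappa$, giving $s\asymp \max(\kappa^{1/2},\eta^{1/3})\asymp \kappa^{1/2}+\eta^{1/3}$. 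For $|w|>1$ it reads $s^3 + s\kappa \asymp \eta$, giving $s\asymp \min(\eta^{1/3},\eta/\kappa)\asymp \eta/(\kappa+\eta^{2/3})$. The only point that needs genuine care is confirming that the sign condition $\im m>0$ picks out the branch with $s>0$ from the cubic in each sub-regime; everything else is routine linear algebra on the block structure.
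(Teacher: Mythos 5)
The paper gives no proof of this lemma (``whose proofs we omit''), so there is nothing to compare against line by line; I will just assess your argument.

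Parts (i)--(iii) are correct and standard. One small imprecision: the trace identity $\sum_i G_{ii}=\sum_\alpha G_{\alpha\alpha}$ in (ii) is used in the paper for general $z=E+\ii\eta$ (e.g.\ in the proof of Theorem \ref{theorem 4.1}), not only for $z=\ii\eta$. Your Schur-complement computation is written only for $z=\ii\eta$, but the same argument goes through verbatim for general $z$: the diagonal blocks are $z\,(XX^*-z^2)^{-1}$ and $z\,(X^*X-z^2)^{-1}$, and $XX^*,X^*X$ are isospectral for square $X$, so the traces agree. Worth stating the general form. The $J=\diag(I_N,-I_N)$ argument for the conjugation identities and the transfer to $\widetilde{G}$ in (iii) are correct.

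Part (iv) reaches the right conclusion, but your intermediate justification contains a genuine sign error. You assert that ``$s\geq\eta^{1/3}\gg\eta$ holds throughout the relevant range,'' but this is false when $|w|>1$: there one has $s\asymp\eta/(\kappa+\eta^{2/3})\leq\eta^{1/3}$, with strict smallness once $\kappa\gg\eta^{2/3}$, and $s$ can even be $\ll\eta$ when $\kappa\asymp 1$ (e.g.\ $\eta=10^{-2}$, $|w|=2$ gives $s\approx 3\cdot10^{-3}<\eta<\eta^{1/3}$). Consequently the reduction $(\eta+s)^2\asymp s^2$, and hence the passage to the truncated cubic $s^3+s(|w|^2-1)\asymp\eta$, is not justified as stated. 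Your final asymptotic is nonetheless correct, but for a different reason: when $|w|>1$ and $\kappa\gg\eta^{2/3}$, both $s(\eta+s)^2$ and $s^3$ are dominated by $s(|w|^2-1)$, so the relevant balance is $s\kappa\asymp\eta$ in the full cubic as well. The cleanest fix is to drop the $s\gg\eta$ claim entirely and argue directly: the map $s\mapsto s(\eta+s)^2+s(|w|^2-1)$ is strictly increasing on $(0,\infty)$ from $0$ to $\infty$, so there is a unique positive root, and one verifies $\asymp$ by plugging in the candidate $s_0=\kappa^{1/2}+\eta^{1/3}$ (for $|w|\leq 1$) or $s_0=\eta/(\kappa+\eta^{2/3})$ (for $|w|>1$) and checking that the left side evaluated at $c\,s_0$ and $C\,s_0$ straddles $\eta$ for suitable constants. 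Finally, the sentence ``the equation becomes real and reduces to a cubic, so any root with $\im m>0$ is automatically purely imaginary'' is stated as if it were automatic; it actually relies on the uniqueness of the solution with $\im m>0$ (the Stieltjes transform of $\varrho_w$). You acknowledge this at the end, but it should be said explicitly that uniqueness is what rules out non-imaginary roots in the upper half-plane.
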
 

\paragraph{Cumulant expansion} Recall that for a real random variable $h$, all of whose moments are finite, the $k$-cumulant of $h$ is
\begin{equation*}
	\cal C_k(h)\deq(-\mathrm{i})^k\bigg(\frac{\dd^{k}}{\dd t^k}\log\mathbb{E}[e^{\mathrm{i}th}]\bigg)\Bigg|_{t=0}.
\end{equation*}
We shall use a standard cumulant expansion from \cite{KKP, Kho2, HK}. The proof was given in e.g.\,\cite[Appendix A]{HKR}. %
\begin{lemma} \label{cumulant}
	Let $f:\R\to\C$ be a smooth function, and denote by $f^{(k)}$ its $k$th derivative. Then, for every fixed $\ell \in\N$, we have 
	\begin{equation}\label{eq:cumulant_expansion}
		\mathbb{E}\big[h\cdot f(h)\big]=\sum_{k=0}^{\ell}\frac{1}{k!}\mathcal{C}_{k+1}(h)\mathbb{E}[f^{(k)}(h)]+\cal R_{\ell+1},
	\end{equation}	
	assuming that all expectations in \eqref{eq:cumulant_expansion} exist, where $\cal R_{\ell+1}$ is a remainder term (depending on $f$ and $h$), such that for any $t>0$,
	\begin{equation} \label{R_l+1}
		\cal R_{\ell+1} = O(1) \cdot \bigg(\E\sup_{|x| \le |h|} \big|f^{(\ell+1)}(x)\big|^2 \cdot \E \,\big| h^{2\ell+4} \mathbf{1}_{|h|>t} \big| \bigg)^{1/2} +O(1) \cdot \bb E |h|^{\ell+2} \cdot  \sup_{|x| \le t}\big|f^{(\ell+1)}(x)\big|\,.
	\end{equation}
\end{lemma}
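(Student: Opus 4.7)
The plan is to derive the expansion by combining the moment--cumulant identity with a Taylor expansion of $f$. By differentiating the cumulant generating function
\[
\log \phi(t) = \sum_{k \ge 1} \frac{\cal C_k(h)}{k!}(\ii t)^k\,, \qquad \phi(t) \deq \bb E[e^{\ii t h}]\,,
\]
and using $\phi'(t) = \phi(t)\cdot(\log\phi)'(t)$, one recovers the classical moment--cumulant relation
\[
\bb E[h^{m+1}] = \sum_{k=0}^{m}\binom{m}{k}\,\cal C_{k+1}(h)\,\bb E[h^{m-k}]
\]
by matching coefficients of $(\ii t)^m$. This is precisely the assertion \eqref{eq:cumulant_expansion} applied to the monomial $f(h) = h^m$ with $\ell \geq m$, and with vanishing remainder.

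For general smooth $f$, I would Taylor-expand $f$ at the origin to order $\ell$:
\[
f(h) = \sum_{j=0}^{\ell}\frac{f^{(j)}(0)}{j!}h^j + R_{\ell+1}(h)\,, \qquad R_{\ell+1}(h) = \frac{h^{\ell+1}}{\ell!}\int_0^1 (1-s)^\ell f^{(\ell+1)}(sh)\,\dd s\,,
\]
and likewise each derivative $f^{(k)}(h)$ for $k \leq \ell$. Plugging these expansions into the two sides of \eqref{eq:cumulant_expansion} and invoking the moment--cumulant identity, the polynomial contributions cancel exactly, and the discrepancy $\cal R_{\ell+1}$ becomes a finite linear combination of $\bb E[h R_{\ell+1}(h)]$ and of terms $\cal C_{k+1}(h)\bb E[\rho_k(h)]$ for $k \leq \ell$, where each $\rho_k$ is a Taylor remainder of $f^{(k)}$ involving $f^{(\ell+1)}$.

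The main (and essentially routine) obstacle is packaging these remainders into the precise form \eqref{R_l+1}. I would split each remainder expectation according to $\{|h| \leq t\}$ and $\{|h| > t\}$. On $\{|h| \leq t\}$, Taylor's theorem controls $R_{\ell+1}(h)$ and each $\rho_k(h)$ pointwise by a constant multiple of $|h|^{\ell+1}\sup_{|x| \leq t}|f^{(\ell+1)}(x)|$; combined with $\bb E|h|^{\ell+2} < \infty$, this yields the second term in \eqref{R_l+1}. On $\{|h| > t\}$, the sup over $[-t,t]$ no longer applies, so I would instead bound $|f^{(\ell+1)}(sh)|$ pointwise by $\sup_{|x| \leq |h|}|f^{(\ell+1)}(x)|$ and apply Cauchy--Schwarz together with $\bb E[h^{2\ell+4}\mathbf{1}_{|h|>t}]^{1/2}$, reproducing the first term of \eqref{R_l+1}. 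Since the cumulants $\cal C_{k+1}(h)$ are themselves controlled by the moments of $h$, the $\cal C_{k+1}(h)\bb E[\rho_k(h)]$ contributions absorb into these two bounds with multiplicative constants depending only on $\ell$.
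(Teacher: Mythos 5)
The paper does not give its own proof of Lemma~\ref{cumulant}; it cites \cite[Appendix A]{HKR}, so there is no in-text argument to compare against. Your route --- deriving the moment--cumulant recursion from $\phi'=\phi(\log\phi)'$, Taylor-expanding $f$ and each $f^{(k)}$ at the origin, cancelling the polynomial parts via that recursion, and bounding the residual --- is a correct and fairly standard elementary proof, and the algebraic half is clean: after the reorganization the remainder is exactly
\[
\cal R_{\ell+1}=\bb E\big[h\,R_{\ell+1}(h)\big]-\sum_{k=0}^{\ell}\frac{\cal C_{k+1}(h)}{k!}\,\bb E\big[\rho_k(h)\big]\,,
\]
with $R_{\ell+1}$ and $\rho_k$ the Taylor remainders of $f$ and of $f^{(k)}$ at order $\ell$ and $\ell-k$.

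The one place you go too fast is the final sentence, where you assert that the $\cal C_{k+1}(h)\,\bb E[\rho_k(h)]$ contributions ``absorb into these two bounds.'' For $\bb E[h\,R_{\ell+1}(h)]$ the split on $\{|h|\le t\}$ versus $\{|h|>t\}$ with Cauchy--Schwarz works exactly as you say. For the cumulant terms, however, the prefactor $\cal C_{k+1}(h)$ is a \emph{deterministic constant} sitting outside the expectation, and after Cauchy--Schwarz on $\{|h|>t\}$ you are left with $|\cal C_{k+1}(h)|\,(\bb E|h|^{2\ell-2k+2}\mathbf{1}_{|h|>t})^{1/2}$, which is not visibly of the form $O(1)\,(\bb E|h|^{2\ell+4}\mathbf{1}_{|h|>t})^{1/2}$: the constant $|\cal C_{k+1}(h)|$ is controlled by the \emph{full} moment $\bb E|h|^{k+1}$, not by anything truncated at $t$. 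This can be repaired, but it needs an extra step that your write-up hides: split $\bb E|h|^{k+1}\le t^{k+1}+\bb E[|h|^{k+1}\mathbf{1}_{|h|>t}]$. The piece $t^{k+1}$ pairs with the truncated moment using $t\le|h|$ on the event, giving $t^{k+1}(\bb E|h|^{2\ell-2k+2}\mathbf{1}_{|h|>t})^{1/2}\le(\bb E|h|^{2\ell+4}\mathbf{1}_{|h|>t})^{1/2}$; for the other piece apply Lyapunov's inequality to the conditional law of $|h|$ on $\{|h|>t\}$ to conclude
\[
\bb E[|h|^{k+1}\mathbf{1}_{|h|>t}]\,\big(\bb E[|h|^{2\ell-2k+2}\mathbf{1}_{|h|>t}]\big)^{1/2}\le\bb P(|h|>t)\,\big(\bb E[|h|^{2\ell+4}\mathbf{1}_{|h|>t}]\big)^{1/2}\,.
\]
On $\{|h|\le t\}$ the bound $|\cal C_{k+1}(h)|\,\bb E|h|^{\ell-k+1}\le C_\ell\,\bb E|h|^{\ell+2}$ follows by H\"older as you expect. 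With this added step your argument is complete.
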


The following result gives bounds on the cumulants of the entries of $B$, whose proof follows from \eqref{2.1} and the homogeneity of the cumulants.
\begin{lemma} \label{Tlemh}
	For every fixed $k \geq3$ we have
	\begin{equation*}
		\cal C_{k}(B_{ij})=O_{k}(1/(Nq^{k-2}))
	\end{equation*}
	uniformly for all $i,j$.
\end{lemma}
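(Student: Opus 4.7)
The plan is to exploit the homogeneity and translation invariance of cumulants together with the fact that the entries of $\cal A$ are (shifted and rescaled) Bernoulli variables. First, write
\[
B_{ij}=q^{-1}\bigl(\cal A_{ij}-p\bigr),
\]
so that by translation invariance (for $k\geq 2$) and homogeneity $\cal C_k(\lambda X)=\lambda^k\cal C_k(X)$, one has
\[
\cal C_k(B_{ij})=q^{-k}\,\cal C_k(\cal A_{ij}).
\]
This reduces the problem to bounding the cumulants of an honest Bernoulli$(p)$ variable.

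Next, I would invoke the standard fact that $|\cal C_k(\cal A_{ij})|\leq C_k\, p(1-p)$ for every fixed $k\geq 2$. This can be seen directly from the cumulant generating function $K(t)=\log(1-p+pe^t)$, whose derivatives at $t=0$ are polynomials in $p$ that vanish at $p\in\{0,1\}$ for $k\geq 2$, and hence are divisible by $p(1-p)$. Alternatively, one may use the moment--cumulant formula, express $\cal C_k(B_{ij})$ as a polynomial in the centered moments $\E B_{ij}^j$ for $j\leq k$, and note that since $\E B_{ij}=0$ only set partitions of $\{1,\dots,k\}$ whose blocks all have size $\geq 2$ contribute; then the block moments are controlled by \eqref{2.1} and the bound $q^2/N\leq 1$ handles the combinatorial prefactors.

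Combining the two inputs with the identity $q^2=Np(1-p)$ gives
\[
|\cal C_k(B_{ij})|\leq C_k\,\frac{p(1-p)}{q^{k}}=\frac{C_k}{Nq^{k-2}},
\]
which is the claimed bound, uniform in $i,j$. There is no genuine obstacle here; the whole content of the lemma is the book-keeping that verifying $p(1-p)/q^k=1/(Nq^{k-2})$, which is immediate from the definition $q=\sqrt{Np(1-p)}$. The only point deserving a line of justification is the Bernoulli cumulant bound, which is a classical computation.
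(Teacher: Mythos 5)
Your proof is correct and is essentially the argument the paper has in mind, since the paper omits the proof and just remarks that it "follows from \eqref{2.1} and the homogeneity of the cumulants": your main route, writing $\cal C_k(B_{ij})=q^{-k}\cal C_k(\cal A_{ij})$ by shift-invariance and homogeneity and bounding the Bernoulli cumulant by $C_k\,p(1-p)$ (divisibility of the polynomial $\cal C_k$ in $p$ by $p(1-p)$), together with $q^2=Np(1-p)$, fills in exactly that remark, and your alternative via the moment--cumulant formula with no singleton blocks and \eqref{2.1} is the other standard way to phrase the same computation. Nothing further is needed.
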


\section{Local law for $H_w$.} \label{sec3}

In this section\footnote{Figure \ref{fig:proof3} describes the relations between different results in Section \ref{sec3}. The key steps are marked in blue. We shall make similar introductions to Sections \ref{sec4.4} and \ref{sec5} in Figures \ref{fig:proof4.2}, \ref{fig:proof4.3} and \ref{fig:proofthm1.1}.}, we focus on the centered model $H_w$ . We shall first prove a weak local law on the whole spectrum (Theorem \ref{theorem 4.1}), and then establish a strong local law near the spectral edge (Theorem \ref{thmHstrong}). The proof of Theorem \ref{theorem 4.1} is rather standard, and the main technical steps in showing Theorem \ref{thmHstrong} are Lemmas \ref{lemma4.4} and \ref{lemma4.3}.

Lemma \ref{lemma4.4} is a generalized War-identity, which aims at utilizing the contribution of 4 off-diagonal entries of the Green function. The main idea behind the proof is that the sum $\sum_{\i} |G_{\i \j}|^4$ preserves its structure under differentiations w.r.t.\,$H$. 

Lemma \ref{lemma4.3}, in particular \eqref{4.32}, estimates the higher order terms in our computation. When a term has many off-diagonal entries of $G$, it can be estimated using Lemma \ref{lemma4.4}; when a term mainly consists of diagonal entries of $G$, we make use of the fact that $G_{\i \i}$ is small near the edge. The source of this smallness is the cusp-singularity.

Combining the above results yields Proposition \ref{prop4.1}, an estimate of the self-consistent equation $P(\ul{G})$. Theorem \ref{thmHstrong} is then deduced from Proposition \ref{prop4.1} by a standard stability analysis argument.

\begin{figure}[ht]
	\begin{tikzpicture}[>=stealth,every node/.style={shape=rectangle,draw,rounded corners, minimum width=2.9cm,},]
		\node[very thick, ] (l2) {
			Theorem~\ref{thmHstrong}};

		\node[very thick, minimum width=3.3cm] (l5)[right=of l2]{Proposition~\ref{prop4.1} };

		\node[very thick] (l7) [right=of l5]{Theorem~\ref{theorem 4.1}};
		\node[very thick] (l8) [below=of l7, , fill=blue!30]{Lemma~\ref{lemma4.3}};
		\node[very thick] (l6) [right=of l8, fill=blue!30]{Lemma~\ref{lemma4.4}};
		
			\node[very thick] (l9) [below=of l6]{Lemma~\ref{lempartialP}};
		
		\node[very thick] (l10) [right=of l7]{Proposition~\ref{prop4.2}};

		
		\draw[->,  line width=.6mm] (l5) to[out=180,in=0] (l2);
		
		\draw[->,  line width=.6mm] (l6) to[out=180,in=0] (l8);
		\draw[->,  line width=.6mm] (l7) to[out=180,in=0] (l5);
		\draw[->,  line width=.6mm] (l8) to[out=180,in=-4] (l5);

		\draw[->,  line width=.6mm] (l10) to[out=180,in=0] (l7);
		
				\draw[->,  line width=.6mm] (l9) to[out=180,in=-3mm] (l8);
		
	\end{tikzpicture}
	\caption{The structure of Section \ref{sec3}}
	\label{fig:proof3}
\end{figure}
\subsection{Weak local law for $H_w$}
For fixed $\delta>0$, we define the domain
\begin{equation} \label{Ddelta}
\b D_\delta \deq \{ (w,z)\in \bb C^2: |w|\leq \delta^{-1},\, z=E+\ii \eta,\, |E|\leq \delta^{-2},  N^{-1+\delta}\leq \eta \leq \delta^{-1}\}\,.
\end{equation}
We shall show that the random matrix $H_w$ satisfies the local density law, which says that its eigenvalue distribution is close to the deterministic $\varrho_w$ in \eqref{mmm}, down to spectral scales containing slightly more than one eigenvalue. This local density law is formulated using the Green function, whose individual entries are controlled by large deviation bounds. The detailed statements are as follows.

\begin{theorem}\label{theorem 4.1}
Fix $\delta\in (0,\xi/100)$. We have
\begin{equation} \label{4.8}
\max_{\i\j}\big|	G_{\i\j}-M_{\i\j} \big|\prec \frac{1}{(N\eta)^{1/6}}+\frac{1}{q^{1/3}}  \quad \mbox{and} \quad \max_{\i \j}|G_{\i\j}|\prec 1
\end{equation}
uniformly for $(w,z)\in \b D_\delta$.
\end{theorem}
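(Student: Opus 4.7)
The plan is to derive an approximate scalar self-consistent equation for the normalized trace $\ul G \deq (2N)^{-1}\tr G$ and, in parallel, for the individual entries of $G-M$, and then to invert it using the cubic stability of $P$ near the cusp.

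\textbf{Step 1 (self-consistent equation).} Starting from $(H_w-z)G=I$, I apply the cumulant expansion of Lemma~\ref{cumulant} to each expectation $\bb E[(H_w)_{\i\k}G_{\k\j}]$, use \eqref{diff} to differentiate $G$, and bound the cumulants of $B_{ij}$ via Lemma~\ref{Tlemh}. The Gaussian (second cumulant) contributions reproduce exactly the deterministic Dyson equation satisfied by $M$, i.e.\ the relation \eqref{mmm}; the odd cumulants drop out by the block structure of $H_w$ and the parity properties in Lemma~\ref{lemma4.11}(ii); and even cumulants of order $\geq 4$ produce explicit corrections of size $N^{-1}q^{-(k-2)}$ times products of Green function entries.

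\textbf{Step 2 (error estimate).} Using the Ward identity \eqref{ward} to collapse any sum $\sum_{\i}|G_{\i\j}|^2$ into $\im G_{\j\j}/\eta$, together with a standard high-moment/Markov estimate for the fluctuation $\ul G-\bb E\ul G$, I get
\begin{equation*}
P(\ul G) \;=\; O_\prec\!\Big(\tfrac{1}{\sqrt{N\eta}} + \tfrac{1}{q}\Big)\,,
\end{equation*}
the two terms reflecting respectively the leading Gaussian fluctuation and the leading (fourth) sparse cumulant contribution.

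\textbf{Step 3 (stability and entrywise bound).} For $(w,z)\in \b D_\delta$, \eqref{um} gives $m=\ii\,\im m$ with $|m|\asymp \kappa^{1/2}+\eta^{1/3}$, so $P'(m) = 3m^2+2\ii\eta m + (1-\eta^2-|w|^2)$ can be arbitrarily small and the worst regime for inverting $P$ is the purely cubic one $\Lambda^3\asymp P(\ul G)$, with $\Lambda\deq \ul G-m$. Cube-rooting yields
\begin{equation*}
|\ul G-m|\;\prec\;\tfrac{1}{(N\eta)^{1/6}}+\tfrac{1}{q^{1/3}}\,.
\end{equation*}
The entrywise bound in \eqref{4.8} follows by running the same cumulant expansion directly for $G_{\i\j}-M_{\i\j}$ (an isotropic version of Step 1), and the a priori $|G_{\i\j}|\prec 1$ then follows from $|M_{\i\j}|=O(1)$ in $\b D_\delta$.

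\textbf{Implementation and main obstacle.} The whole argument is run inside a continuity argument in $\eta$: start at $\eta=\delta^{-1}$ where $\|G\|\leq \delta$ is trivial, and descend along a polynomially fine $\eta$-grid, using deterministic Lipschitz continuity of $G$ in $\eta$ to transfer high-probability bounds. The main obstacle is Step 3: because the self-consistent equation is only \emph{cubically} stable at the cusp, I cannot tolerate an $O(|\Lambda|)$ term on the right-hand side of $P(\ul G)=\cdots$. This forces a careful tracking of the smallness $|m|\asymp \kappa^{1/2}+\eta^{1/3}$ (as flagged in the introduction): any Green function factor produced by the cumulant expansion, once centered around $M$, contributes an extra factor of $|m|$ or $\eta^{1/3}$, giving precisely the cusp-fluctuation gain that lets the bootstrap close at the advertised weak-law thresholds.
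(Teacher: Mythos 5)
Your overall strategy (cumulant expansion of $(H_w-z)G=I$, passing to a scalar cubic self-consistent equation, cube-root stability, continuity/bootstrap in $\eta$) is the same as the paper's, and your Step~2 error budget $P(\ul G)\prec (N\eta)^{-1/2}+q^{-1}$ leading to $(N\eta)^{-1/6}+q^{-1/3}$ is the correct mechanism. However, there are several inaccuracies, and the ``main obstacle'' you flag is in fact not an obstacle for Theorem~\ref{theorem 4.1} at all.

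First, Step~3 repeatedly uses facts that hold only on the imaginary axis, but Theorem~\ref{theorem 4.1} is stated on all of $\b D_\delta$, where $z=E+\ii\eta$ with $|E|\le\delta^{-2}$ general. Lemma~\ref{lemma4.11}(iv) gives $m=\ii\,\im m$ and $|m|\asymp\kappa^{1/2}+\eta^{1/3}$ \emph{only} for $z=\ii\eta$; for generic $z\in\b D_\delta$ neither holds, and $P'(m)$ is \emph{not} forced to be small. Similarly you wrote $P'(m)=3m^2+2\ii\eta m+(1-\eta^2-|w|^2)$, which is the purely imaginary-$z$ form; on $\b D_\delta$ one has $3m^2+4zm+1+z^2-|w|^2$. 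The paper's stability step is simpler and works for general $z$: after Taylor-expanding around $m$ one gets the cubic \eqref{4.199}, and the only structural input is $\im(3m+2z)>2\eta>0$, from which the cube-root bound $g-m\prec((1+\phi)\cal E)^{1/3}$ follows by a standard cubic-stability lemma. No cusp-fluctuation tracking of $|m|$ is used or needed.

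Second, the remark in Step~1 that odd cumulants ``drop out by the block structure of $H_w$ and the parity properties'' is not correct. For Bernoulli entries $\cal C_3(B_{ij})=O(N^{-1}q^{-1})\ne 0$, and in the Hermitized model these contributions survive the expansion; they are \emph{estimated}, not eliminated. (The paper's introduction is also careful on this point: it says the odd-cumulant terms are ``small enough'', not zero.) This does not change the final rate, since the third-cumulant terms are indeed absorbed into the $q^{-1}$ budget, but your explanation for why would be misleading if left as written.

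Third, your ``main obstacle'' paragraph conflates Theorem~\ref{theorem 4.1} with the strong local law of Proposition~\ref{prop4.1}. The concern that ``I cannot tolerate an $O(|\Lambda|)$ term'' and the claim that this ``forces a careful tracking of $|m|\asymp\kappa^{1/2}+\eta^{1/3}$'' describe the machinery of Section~\ref{sec3.2}, not the proof of the weak entrywise law. Proposition~\ref{prop4.2} gives $\cal E=(1+\phi)^4\bigl((N\eta)^{-1/2}+q^{-1}\bigr)$; this is cube-rooted as is, and the $(1+\phi)$-powers are harmless in the bootstrap. The cusp-fluctuation gain enters only when one needs $\ul G-m\prec(N\eta)^{-1}$ near $z=\ii\eta$, which is a later and strictly harder statement.

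Finally, the paper's route from the scalar bound back to the entrywise one is not a direct ``isotropic cumulant expansion'' as you propose, but instead the $2\times2$ linear system \eqref{888} coming from the resolvent identity together with the already-obtained scalar bound; its determinant $|(z+m)^2-|w|^2|^{-1}=|\frak m|=O(1)$ makes the inversion uniform on $\b D_\delta$. Your direct-isotropic alternative would also work, but the linear-system step is cleaner and is needed anyway to identify $M$.
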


The next result is the probabilistic step in showing Theorem \ref{theorem 4.1}. The proof is a standard process using Lemma \ref{cumulant}. The proof of a similar result can be found in \cite[Theorem 1.5(i)]{HKR}; we omit the details.

\begin{proposition} \label{prop4.2}
Fix $\delta\in (0,\xi/100)$ and $\nu \in (0,\delta/100)$. Let $(w,z)\in \b D_\delta$. Suppose that $\max_{\i\j}|G_{\i\j}-M_{\i\j}|\prec \phi$ for some deterministic $\phi\in [N^{-1},N^{\nu}]$ at $(w,z)$. Then at $(w,z)$ we have
	\[
	\max_{\hat\imath \hat\jmath}\big|(HG)_{\hat\imath\hat\jmath}+\ul{G}G_{\hat{\imath}\hat\jmath} \big|\prec (1+\phi)^4\cdot \bigg(\sqrt{\frac{1}{N\eta}}+\frac{1}{q}\bigg)\eqd 
	{\cal E}\,.
	\]
	Here $H=H_0$, as defined in \eqref{H}, and the normalized trace $\ul{G}$ was defined in \eqref{trace}.
\end{proposition}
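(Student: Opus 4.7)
The proposed approach is the high-moment method. For any fixed $p \in \bb N$, one bounds $\bb E \abs{Z_{\i\j}}^{2p}$ by a constant multiple of $\cal E^{2p}$, where $Z_{\i\j} \deq (HG)_{\i\j} + \ul G G_{\i\j}$; the estimate then follows from Markov's inequality together with a union bound over $\i,\j \leq 2N$, combined with the trivial deterministic bound $\abs{G_{\i\j}}\leq \eta^{-1}$. Writing $(HG)_{\i\j} = \sum_\k H_{\i\k} G_{\k\j}$, where the sum runs over $\k$ in the block complementary to $\i$ (so $H_{\i\k}$ is a nontrivial independent entry), one applies Lemma \ref{cumulant} in the variable $H_{\i\k}$ to each term of
\[
\bb E \abs{Z_{\i\j}}^{2p} = \sum_\k \bb E \qb{H_{\i\k} G_{\k\j} Z^{p-1} \ol Z^{\,p}} + \bb E\qb{\ul G G_{\i\j} Z^{p-1} \ol Z^{\,p}}\,.
\]

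The leading second-cumulant term uses $\cal C_2(H_{\i\k}) = 1/N$ together with \eqref{diff} to produce
\[
-\tfrac{1}{N}\sum_\k \bb E\qb{(G_{\k\i} G_{\k\j} + G_{\k\k} G_{\i\j}) Z^{p-1} \ol Z^{\,p}} + (\text{derivatives landing on }Z^{p-1} \ol Z^{\,p})\,.
\]
The $G_{\k\k} G_{\i\j}$ contribution, by \eqref{sumialpha}, equals $-\ul G G_{\i\j} \bb E[Z^{p-1}\ol Z^{\,p}]$ and exactly cancels the $\ul G G_{\i\j}$ explicitly present in $Z_{\i\j}$. The off-diagonal residue $\frac{1}{N}\sum_\k G_{\k\i} G_{\k\j}$ is bounded by $(1+\phi)/(N\eta)$ via Cauchy--Schwarz and the Ward identity \eqref{ward}, which is absorbed into $\sqrt{1/(N\eta)}$. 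The terms in which the derivative lands on $Z^{p-1}\ol Z^{\,p}$, upon pairing with their complex conjugates from $\partial_{\i\k}\ol Z$ in a parallel factor of $\ol Z$, realize the Gaussian fluctuation scale $\frac{1}{N^2}\sum_\k \abs{G_{\k\j}}^2 \prec (1+\phi)/(N^2\eta)$, whose square root yields the $\sqrt{1/(N\eta)}$ contribution to $\cal E$ after Young's inequality closes the recursion.

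Higher cumulants invoke Lemma \ref{Tlemh}: $\cal C_{n+1}(H_{\i\k}) = O(N^{-1} q^{1-n})$ for $n \geq 2$. Expanding $\partial_{\i\k}^{\,n}(G_{\k\j} Z^{p-1}\ol Z^{\,p})$ by repeated use of \eqref{diff} produces a sum of products of Green-function entries with strictly lower powers of $Z$. After summing in $\k$, off-diagonal factors are absorbed by Ward and contribute extra smallness in $\eta$, while the ``fully diagonal'' piece $\cal C_{n+1} \sum_\k G_{\k\k}^{\,n} G_{\i\j}$ is of order $q^{1-n} \ul G^{\,n} G_{\i\j}$, maximized at $n=2$, and contributes the $1/q$ part of $\cal E$. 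The cumulant series is truncated at $\ell$ large depending on $\delta$ and $p$, and the remainder $\cal R_{\ell+1}$ from \eqref{R_l+1} is handled by cutting off $\abs{H_{\i\k}} \leq t \asymp 1$ using the Bernoulli tail of $B_{ij}$.

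The main technical obstacle is the bookkeeping of derivatives falling on the high-power factor $Z^{p-1}\ol Z^{\,p}$, which reintroduce $(HG)$-type expressions with one fewer power of $Z$. The standard resolution is a Young self-bounding argument, gathering all terms into $\bb E\abs{Z}^{2p} \leq (C\cal E)^{2p} + \tfrac{1}{2}\bb E\abs{Z}^{2p}$, with the second term absorbed into the left-hand side. A secondary subtlety is verifying that the diagonal $n=2$ contribution truly reaches $1/q$ and is not further suppressed by a hidden Ward factor; this uses that $\sum_\k G_{\k\k}^{\,2}$ is of order $N$ (not smaller) once the a priori $\phi$-bound is available. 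Once the recursion is closed, choosing $2p$ large depending on the desired power of $N$ produces the $\prec$-bound, along lines parallel to \cite[Theorem 1.5(i)]{HKR}.
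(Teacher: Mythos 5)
Your proposal matches the paper's intended argument, which is omitted and referred to \cite[Theorem 1.5(i)]{HKR}: a high-moment estimate of $Z_{\i\j} = (HG)_{\i\j}+\ul{G}G_{\i\j}$ via cumulant expansion in $H_{\i\k}$, with the second-cumulant diagonal piece cancelling $\ul G G_{\i\j}$ through \eqref{sumialpha}, Ward-identity absorption of the off-diagonal residues, higher cumulants controlled by Lemma~\ref{Tlemh}, and a H\"older/Young self-bounding recursion closed after a union bound in $\i,\j$.

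Two inaccuracies in the bookkeeping, neither fatal but worth correcting. First, the variance-type term is $\frac{1}{N}\sum_\k|G_{\k\j}|^2\prec(1+\phi)/(N\eta)$, not $\frac{1}{N^2}\sum_\k|G_{\k\j}|^2$; the $\sqrt{1/(N\eta)}$ in $\cal E$ appears because this term drops \emph{two} powers of $Z$, so after H\"older the recursion $\bb E|Z_{\i\j}|^{2p}\lesssim A\,\bb E|Z_{\i\j}|^{2p-2}+\dots$ returns $Z_{\i\j}\prec A^{1/2}+\dots$. Second, and more substantively, the ``fully diagonal'' monomial $G_{\k\k}^{\,n}G_{\i\j}$ does \emph{not} arise at $n=2$: expanding via \eqref{diff},
\[
\partial^2_{\i\k}G_{\k\j}=2G_{\k\i}^2G_{\k\j}+2G_{\k\k}G_{\i\i}G_{\k\j}+2G_{\k\i}G_{\k\k}G_{\i\j}+2G_{\k\k}G_{\i\k}G_{\i\j}\,,
\]
and every monomial still carries an off-diagonal $G$ with a $\k$-index, so after $\frac{1}{N}\sum_\k$ the third-cumulant contribution is $\prec(1+\phi)^{5/2}q^{-1}(N\eta)^{-1/2}$, i.e.\ it \emph{is} further suppressed by a Ward factor, contrary to your remark. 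The first monomial with all $\k$-indices diagonal, $G_{\k\k}^2G_{\i\i}G_{\i\j}$, appears at $n=3$ and contributes $O((1+\phi)^4/q^2)$. Since $1/q^2\leq 1/q$, the stated $\cal E$ is still achieved (in fact you obtain something slightly sharper), but the justification that ``$\sum_\k G_{\k\k}^{\,2}$ is of order $N$ \dots\ contributes the $1/q$ part'' misidentifies the term's origin.
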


Having Proposition \ref{prop4.2} at hand, Theorem \ref{theorem 4.1} then follows from a straightforward stability analysis argument. 

\begin{proof}[Proof of Theorem \ref{theorem 4.1}]
	Let us denote $	g\deq \ul{G}$, and $\frak g\deq N^{-1} \sum_{i} G_{i' i}$. Suppose that $\max_{\i\j}|G_{\i\j}-M_{\i\j}|\prec \phi$ at $(w,z)\in \b D_\delta$. Triangle inequality and \eqref{um} imply $\max_{\i \j}|G_{\i\j}|\prec 1+\phi$. Proposition \ref{prop4.2} and the resolvent identity imply that
	\begin{equation} \label{4.133}
		1+z g+g^2+w\frak g\prec \cal E \quad \mbox{and} \quad z \frak g +g\frak g+\bar{w}g\prec \cal E\,.
	\end{equation}
	Multiplying $z+g$ and $w$ on the first and second relation of \eqref{4.133} respectively, together with $|z|=O(1)$ and $\max_{\i \j}|G_{\i\j}|\prec 1+\phi$, we have
	\[
	g^3+2z g^2+(1+z^2-|w|^2)g+z \prec (1+\phi)\,\cal E\,.
	\]
As $m$ satisfies $m^3+2z m^2+(1+z^2-|w|^2)m+z=0$, by Taylor expansion, we get a cubic equation for $g-m$, namely
	\begin{equation} \label{4.199}
		(g-m)^3+(3m+2z)(g-m)^2+(3m^2+4z m+1+z^2-|w|^2)(g-m)\prec  (1+\phi)\,\cal E\,.
	\end{equation}
Note that $\im (3m+2z)>\im 2z=2\eta>0$. A simple analysis of the cubic equation \eqref{4.199} implies
	\begin{equation} \label{4.25}
		g-m\prec  ((1+\phi)\cal E)^{1/3}\,.
	\end{equation}
To estimate the entries of $G$, we can use Proposition \ref{prop4.2}, \eqref{4.25}, and the resolvent identity  to obtain
\begin{equation} \label{888}
	\delta_{i \j}+(z +m) G_{i\j}+wG_{i'\j} \prec (1+\phi)^2\cal E^{1/3}\,, \quad \delta_{i' \j}+\bar{w}G_{i\j} +(z +m) G_{i'\j}\prec (1+\phi)^2\cal E^{1/3}\,.
\end{equation}
 We can view \eqref{888} as a system of linear equations, with unknowns $G_{i\j}$ and $G_{i'\j}$. Note that its determinant satisfies
\[
|(z+m)^2-|w^2||^{-1}=\Big|\frac{m}{z+m}\Big|=|\frak m|=\Big|\frac{1+zm+m^2}{w^2}\Big|=O(1)\,,
\]
where we use the second and fourth term to get the estimate for the cases $|w|\leq 1/2$ and $|w|\geq 1/2$ respectively. Solving \eqref{888} for $G_{i\j}$ and $G_{i'\j}$, we get
\begin{equation} \label{4/26}
	\max_{\i\j}|G_{\i\j}-M_{\i\j}| \prec (1+\phi)^2\cal E^{1/3}
\end{equation}
at $(w,z)$, provided that $\max_{\i\j}|G_{\i\j}-M_{\i\j}|\prec \phi$. In addition, \eqref{4/26} also implies $\max_{\i}|G_{\i\i}|\prec 1$ at $(w,z)$. By a deterministic monotonicity result (see e.g.\,\cite[Lemma 10.2]{BK16}), one can show that
\[
\max_{\i \j}|G_{\i\j}|\prec N^{\nu}
\]
for all $(w,\tilde{z})$, where $\tilde{z}=E+\ii \tilde{\eta}$, $\widetilde{\eta} \in [N^{-\nu}\eta,\eta)$.  Then the proof can be concluded through a standard bootstrap argument (see e.g.\cite[Section 4.2]{HKR}).
\end{proof}

A straight-forward consequence (see e.g.\cite[Corollary 2.4]{AEK5}) of Theorem \ref{theorem 4.1} is the following complete eigenvector delocalization of $H_w$.

\begin{corollary}\label{lem3.3}
Let $w$ satisfy $|w|\leq \delta^{-1}$ for some fixed $\delta>0$, and let $\b u_1,...,\b u_{2N}\in \bb C^{2N}$ be the $L^2$-normalized eigenvectors of $H_w$. Then $\max_i\|\b u_i\|_\infty \prec N^{-1/2}$. 
\end{corollary}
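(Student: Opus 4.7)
The corollary should follow from Theorem \ref{theorem 4.1} via the standard extraction of delocalization bounds from diagonal Green function estimates. The key identity is the spectral decomposition
\[
\im G_{\i\i}(E + \ii \eta) \;=\; \sum_{k=1}^{2N} \frac{\eta\, |\b u_k(\i)|^2}{(\lambda_k - E)^2 + \eta^2},
\]
valid for every index $\i \in \{1,\dots,2N\}$. Specializing to $E = \lambda_k$ and retaining only the $k$-th summand yields the one-line bound
\[
|\b u_k(\i)|^2 \;\leq\; \eta\, \im G_{\i\i}(\lambda_k + \ii \eta).
\]
This is the entire bridge between the diagonal entries of $G$ and pointwise control on the eigenvectors.

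Next, I would invoke the uniform bound $\max_{\i}|G_{\i\i}| \prec 1$ from Theorem \ref{theorem 4.1}, choose $\eta = N^{-1+\delta'}$ for an arbitrarily small $\delta' > 0$, and take the maximum over $\i$. This immediately produces $\|\b u_k\|_\infty^2 \prec N^{-1+\delta'}$, i.e.\ $\|\b u_k\|_\infty \prec N^{-1/2}$, since the extra $N^{\delta'}$ is absorbed into $\prec$. Evaluating the local law at the random point $z = \lambda_k + \ii \eta$ is justified by a standard $N^{-C}$-net argument in $E$ combined with the crude Lipschitz estimate $|\partial_E G_{\i\i}| \leq \eta^{-2}$, which turns the uniformity over $\b D_\delta$ asserted by Theorem \ref{theorem 4.1} into uniformity over random $E$.

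The only step requiring input beyond Theorem \ref{theorem 4.1} is the requirement $|\lambda_k| \leq \delta^{-2}$ built into $\b D_\delta$. I would close this via a preliminary operator norm bound $\|H_w\|_{\mathrm{op}} \leq \|B - wI_N\|_{\mathrm{op}} \leq \|B\|_{\mathrm{op}} + |w|$, using $\|B\|_{\mathrm{op}} \prec 1$; in the polynomial sparsity regime $Np \geq N^\tau$ this follows from a routine truncation-and-moment computation (leveraging $|B_{ij}| = O(1/q)$ and the cumulant bounds of Lemma \ref{Tlemh}), so all eigenvalues of $H_w$ fall in the range of $\b D_\delta$ with overwhelming probability once $\delta$ is chosen small enough relative to $\|B\|_{\mathrm{op}} + |w|$. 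Establishing this operator norm bound is the only (mild) obstacle; once it is in hand, the spectral-decomposition step above applies uniformly to every $k$, and the corollary follows.
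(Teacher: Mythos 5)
Your argument is correct and is precisely the standard route the paper has in mind when it calls the corollary a ``straight-forward consequence'' of Theorem \ref{theorem 4.1}: spectral decomposition of $\im G_{\i\i}$ at $z=\lambda_k+\ii\eta$ with $\eta=N^{-1+\delta'}$, the bound $\max_{\i}|G_{\i\i}|\prec 1$, a net-plus-Lipschitz step to handle the random spectral parameter, and a rough bound $\|H_w\|\leq\|B\|+|w|=O(1)$ (the same moment-method input the paper invokes elsewhere) to keep the eigenvalues inside $\b D_\delta$. No gaps.
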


\subsection{Strong local law for $H_w$ near the spectral edge} \label{sec3.2}
For fixed $\delta>0$, we define the spectral domains near the edge
\begin{equation} \label{3.11}
\b S^{(1)}_{\delta}\deq \{(w,\ii\eta) \in \bb C\times \ii \bb R: ||w|-1|\leq N^{-1/2+\delta}, N^{-1+\delta}\leq \eta \leq N^{-3/4+\delta}\}
\end{equation}
and
\begin{equation} \label{3.111}
	\b S^{(2)}_\delta \deq \{(w,\ii\eta)\in \bb C\times \ii\bb R: 1+N^{-1/2+\delta}\leq|w|\leq \delta^{-1}, N^{-1+\delta}\leq \eta \leq N^{-3/4+\delta}\}\,.
\end{equation}
We also set $\b S_\delta \deq \b S_\delta^{(1)} \cup \b S_\delta^{(2)}$. In other words, we are now only considering the Stieltjes transform of $H_w$ at $z=\ii \eta$. Applying Corollary \ref{lem3.3}, we can improve \eqref{ward} to
\begin{equation} \label{wardd}
	\sum_{\i}|G_{\i\j}|^2=\frac{\im G_{\j\j}}{\eta}\prec \frac{\im \ul{G}}{\eta} \prec \frac{|\ul{G}- m|+\im m}{\eta}\,,
\end{equation}
and as $m(w,\ii\eta)$ is purely imaginary, we also have
\begin{equation} \label{4q2}
	G_{\i\i}=\ii \im G_{\i\i} \prec |\ul{G}|\leq |\ul{G}-m|+\im m\,.
\end{equation} 
In sections \ref{sec3.2} -- \ref{sec4.3} we shall prove the following result.

\begin{theorem} \label{thmHstrong}
	Fix $\delta\in (0,\xi/100)$, $\nu \in (0,\delta/100)$. We have the following results.
	
	(i) For $(w,\ii\eta)\in \b S^{(1)}_\delta$, we have
	\[
	\ul{G}-m \prec \frac{1}{N\eta}\,.
	\]
	
	(ii) When $(w,\ii\eta)\in \b S^{(2)}_\delta$, we have the stronger estimate
	\begin{equation}\label{luguanyan}
		\ul{G}-m\prec \frac{1}{N^{1+\nu}\eta}\,.
	\end{equation}

\end{theorem}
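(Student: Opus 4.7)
The plan is to establish a tight self-consistent equation $P(g)\approx 0$ for $g\deq\ul G$ and then invert the cubic $P$, using the near-cusp stability $|g-m|\lesssim |P(g)|/|P'(m)|$ with $|P'(m)|\asymp\kappa+\eta^{2/3}$. The decisive new inputs over Theorem \ref{theorem 4.1} are the pointwise diagonal smallness \eqref{4q2} and the enhanced Ward identity \eqref{wardd}: Corollary \ref{lem3.3} together with $m=\ii\im m$ yield $|G_{\i\i}|\prec|m|\asymp\kappa^{1/2}+\eta^{1/3}$, and consequently $\sum_{\i}|G_{\i\j}|^2\prec|m|/\eta$. These are the two algebraic mechanisms---powered by the cusp smallness $|m|\leq N^{-1/4+O(\delta)}$ on $\b S^{(1)}_\delta$---through which every higher-order cumulant correction will be absorbed.

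To estimate $\E|P(g)|^{2n}$ for any fixed $n\in\bb N$, I would combine the entrywise resolvent identity $(H_w-\ii\eta)G=I$ with the cumulant expansion of Lemma \ref{cumulant}, polarising $|P(g)|^{2n}=P(g)\cdot\overline{P(g)}^n P(g)^{n-1}$ so that each expansion step lowers the remaining power of $P(g)$ by one; one closes the off-diagonal block in parallel to recover the $(g,\frak g)$-system as in \eqref{4.133}. The $\cal C_2$-term reproduces the cubic $P$ exactly; each $\cal C_{k+1}$-term with $k\geq 2$ is bounded by $N\cal C_{k+1}(B_{12})=O(q^{-(k-1)})$ times a sum of $(k+1)$-fold products of Green-function entries, with odd-cumulant contributions being negligible by the algebraic relations of Lemma \ref{lemma4.11}(ii). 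Classifying these products into diagonal factors (each $\prec|m|$) and Ward-paired off-diagonal pairs (each pair $\prec|m|/\eta$) shows that every such term is bounded by $N^{-\nu_0}\cdot|P'(m)|/(N\eta)\cdot\E|P(g)|^{2n-1}$ for some $\nu_0=\nu_0(\tau,\delta)>0$; the point is that each extra derivative forces an extra factor of $|m|\leq N^{-1/4+O(\delta)}$, which beats the cumulant penalty $q^{-1}\leq N^{-\tau/2}$ when $\delta$ is chosen small enough relative to $\tau$. A H\"older-type absorption then yields $\E|P(g)|^{2n}\leq(N^\varepsilon|P'(m)|/(N\eta))^{2n}$, whence $P(g)\prec|P'(m)|/(N\eta)$ by Markov.

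Stability of the cubic converts this into part (i): since $|P''(m)|\asymp|m|$, we have $P(g)-P(m)=P'(m)(g-m)\bigl[1+O\bigl(|g-m|/(\kappa^{1/2}+\eta^{1/3})\bigr)\bigr]$ whenever $|g-m|\ll\kappa^{1/2}+\eta^{1/3}$, a regime reached by descending $\eta$ from the large scales controlled by Theorem \ref{theorem 4.1} and bootstrapping on a fine grid. Inverting gives $g-m\prec 1/(N\eta)$. Part (ii) follows by the same scheme: for $|w|\geq 1+N^{-1/2+\delta}$, the dimensionless ratio $|m|^2/|P'(m)|$ that controls the size of the cumulant corrections satisfies $|m|^2/|P'(m)|\lesssim \eta^2/\kappa^3\leq N^{-\delta}$, whereas on $\b S^{(1)}_\delta$ it is of order unity; propagating this extra $N^{-\nu}$ through the inversion yields \eqref{luguanyan}. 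The principal obstacle is the combinatorial bookkeeping: for every contraction pattern produced by the $k$-th derivative of a product of resolvent entries, one has to distribute the resulting $G$-factors into diagonals and Ward pairs and verify the uniform bound above. In particular, diagrams in which many off-diagonal entries $G_{i\alpha}$---which are of order one, not $|m|$---appear without being Ward-paired must be ruled out by a systematic use of the block identities in Lemma \ref{lemma4.11}.
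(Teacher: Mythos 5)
Your high-level strategy matches the paper's: estimate $\E|P(g)|^{2n}$ by cumulant expansion, exploit $|G_{\i\i}|\prec|m|$ and the improved Ward identity $\sum_{\i}|G_{\i\j}|^2\prec(\im m+\Lambda)/\eta$, then invert the cubic with the cusp-type stability. The two cited mechanisms (diagonal smallness and enhanced Ward) are indeed exactly the right inputs, and your scaling intuition for part (ii) via $\eta^2/\kappa^3\leq N^{-\delta}$ is also correct. But there is a genuine gap in the central step, where you write that classifying Green-function factors into diagonals ($\prec|m|$) and Ward pairs ($\prec(\im m+\Lambda)/\eta$) closes the estimate at the level $|P'(m)|/(N\eta)$.

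The obstacle is the second-cumulant fluctuation term, not the higher cumulants. After the $\cal C_2$-term cancels the leading part of $P$, what remains includes $X_{1,1}=\frac{w}{N^2}\sum_{ij}\bb E\,G_{ji'}G_{ji}P^{n-1}$. A single Ward identity over $j$ yields only $X_{1,1}\prec\frac{\im m+\Lambda}{N\eta}\,\bb E|P|^{n-1}$, which is larger by a full factor of $|m|\sim\kappa^{1/2}+\eta^{1/3}$ than the target $\cal E_1\sim\frac{(\im m+\Lambda)^2}{N\eta}$. This missing factor is exactly the cusp gain; it cannot be obtained from the diagonals-versus-Ward-pairs classification, because $G_{ji'}$ is a cross-block off-diagonal entry already fully consumed by the Ward pairing. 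The paper's remedy is the \emph{cusp fluctuation averaging} imported from \cite{EKS2020,AEK5}: one re-expands the factor $G_{ji'}$ via the resolvent identity $wG_{ji'}=-\delta_{ji}-\ii\eta G_{ji}+(GH)_{ji}$, feeds the $(GH)$ piece back through Lemma \ref{cumulant}, and only then does the extra power of $\im m+\Lambda$ appear. This is an iterated expansion step, not a pattern-classification step, and your proposal as written would stall precisely here. The paper even structures the proof around this: it splits $P=(\ii\eta+g)P_1+P_2$ so that $(\ii\eta+g)P_1$ carries a free prefactor $\im m+\Lambda$ and needs no averaging, isolating $P_2$ as the only term requiring the re-expansion.

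Two further points you under-specify. First, several higher-cumulant bounds (e.g.\ the $\cal C_3$-contribution $X_{2,0,1}$) require a fourth-moment Ward-type bound $\frac{1}{N}\sum_{\i}|G_{\i\j}|^4\prec\bigl(\frac{\Lambda+\im m}{N\eta}\bigr)^2+\frac 1N$ (Lemma \ref{lemma4.4}), itself proven by a separate moment-method argument; your proposal does not supply this. Second, the conclusion $g-m\prec 1/(N\eta)$ does not follow in one shot from $P(g)\prec|P'(m)|/(N\eta)$: the error $\cal E_1$ in Proposition \ref{prop4.1} depends on the a priori bound $\Lambda$ for $|g-m|$, so the final result requires the full bootstrap of Section \ref{sec4.3} in which $\Lambda$ is iteratively improved through the three-branched cubic stability bound \eqref{4.26}. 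Your one-step inversion would only produce the optimal rate after that iteration.
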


An immediate consequence of Theorem \ref{thmHstrong} is the optimal upper bound on the spectral radius of $B$.

\begin{corollary}
	Let $\lambda_1^B,...,\lambda_N^B$ be eigenvalues of $B$. Then for any fix $\delta>0$, we have
	\[
	\big(\max_{i} |\lambda_i^B|- 1\big)_+= O(N^{-1/2+\delta})
	\] 
	with very high probability.
\end{corollary}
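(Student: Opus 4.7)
The plan is the standard Hermitization reduction. A number $\lambda\in\bb C$ is an eigenvalue of $B$ if and only if $\sigma_{\min}(B-\lambda)=0$, equivalently $0$ is an eigenvalue of $H_\lambda$ of multiplicity at least two. Thus ruling out eigenvalues of $B$ with $|\lambda|\ge 1+N^{-1/2+\delta}$ amounts to showing, with very high probability, that for every such $w$ the spectrum of $H_w$ stays at a definite distance from the origin. I would extract this from $\im\ul{G}(w,\ii\eta)$ at the critical scale $\eta\sim N^{-1}$, playing the strong local law \eqref{luguanyan} against the mandatory lower bound forced by a near-zero singular value.

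Before the main step I would record the soft a priori bound $\|B\|_{\mathrm{op}}\le 3$ with very high probability, confining all eigenvalues of $B$ to the disc $\{|w|\le 3\}$. This is a soft consequence of the weak law of Theorem \ref{theorem 4.1} at $w=0$: the spectrum of $H_0$ is $\{\pm\sigma_j(B)\}$, the limiting density $\varrho_0$ is supported on $[-2,2]$, and the uniform bound $\max_{\i\j}|G_{\i\j}|\prec 1$ on $\b D_\delta$ excludes singular values of $B$ beyond $2+o(1)$ via a standard contour/grid argument outside the spectrum.

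Now fix $0<\delta'<\min(\delta,1/4)$, set $\eta\deq N^{-1+\delta'}$, and let $\cal N$ be an $N^{-10}$-net of the annulus $\{w\in\bb C:1+N^{-1/2+\delta'}\le |w|\le 3\}$, so $|\cal N|\le N^{25}$. Applying Theorem \ref{thmHstrong}(ii) together with a polynomial union bound (which preserves $\prec$) yields
\[
\max_{w\in\cal N}\bigl|\ul{G}(w,\ii\eta)-m(w,\ii\eta)\bigr|\prec \frac{1}{N^{1+\nu}\eta}=N^{-\nu-\delta'}.
\]
Lemma \ref{lemma4.11}(iv) gives $\im m\asymp \eta/(\kappa+\eta^{2/3})$ with $\kappa=|w|-1$; since $\kappa\ge N^{-1/2+\delta'}$ and $\eta^{2/3}=N^{-2/3+2\delta'/3}$, the choice $\delta'<1/4$ ensures $\kappa\gg\eta^{2/3}$, so $\im m\asymp \eta/\kappa\le N^{-1/2}$ and consequently
\[
\max_{w\in\cal N}\im\ul{G}(w,\ii\eta)\prec N^{-1/2}.
\]

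Finally, suppose for contradiction that $B$ has an eigenvalue $\lambda^B$ with $|\lambda^B|\ge 1+N^{-1/2+\delta}$. By the a priori bound there is $w\in\cal N$ with $|w-\lambda^B|\le N^{-10}$ and $|w|\ge 1+N^{-1/2+\delta'}$. Weyl's inequality for singular values gives $\sigma_{\min}(B-w)\le \sigma_{\min}(B-\lambda^B)+|w-\lambda^B|\le N^{-10}$, so $H_w$ has two eigenvalues $\pm\sigma_{\min}(B-w)$ in $[-N^{-10},N^{-10}]$ and
\[
\im\ul{G}(w,\ii\eta)=\frac{1}{2N}\sum_{\j=1}^{2N}\frac{\eta}{\mu_{\j}^{2}+\eta^{2}}\ge \frac{1}{N}\cdot\frac{\eta}{N^{-20}+\eta^{2}}\ge \frac{1}{2N\eta}=\frac{N^{-\delta'}}{2}.
\]
The previous display translates to $\im\ul{G}(w,\ii\eta)\le N^{-1/2+\epsilon}$ w.v.h.p.\,for every fixed $\epsilon>0$, and $\delta'<1/4$ lets me take $\epsilon<1/2-\delta'$, contradicting the lower bound. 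The proof is routine once Theorem \ref{thmHstrong} is in hand; the only subtlety is the inequality $\kappa\gg\eta^{2/3}$, which keeps us out of the cusp regime $\im m\asymp\eta^{1/3}\sim N^{-1/3+\delta'/3}$ that would be too weak to beat $N^{-\delta'}/2$.
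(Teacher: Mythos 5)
Your main argument is essentially the same as the paper's: at $\eta\sim N^{-1+o(1)}$, the strong local law \eqref{luguanyan} forces $\im\ul{G}$ to be much smaller than the $\frac{1}{2N\eta}$ lower bound that a near-zero singular value of $B-w$ would create, and a union bound over an $N^{-10}$-net (combined with a bounded spectral radius a priori) finishes the proof. Where the paper upgrades the pointwise local law to a uniform one via the $N^3$-H\"older continuity of $\ul{G}$ in $w,\eta$, you instead localize the contradiction by approximating the alleged eigenvalue with a net point and applying Weyl's inequality for the smallest singular value; both mechanisms work, and yours is a clean alternative.

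There are two inaccuracies you should fix. First, the claimed a priori bound $\|B\|_{\mathrm{op}}\le 3$ does \emph{not} follow from the weak law of Theorem~\ref{theorem 4.1}: the error $\frac{1}{(N\eta)^{1/6}}+\frac{1}{q^{1/3}}$, evaluated at the smallest allowed scale $\eta=N^{-1+\delta}$, is of order $N^{-\delta/6}+q^{-1/3}$, which is far larger than the size $\frac{1}{2N\eta}\sim N^{-\delta}$ of the contribution from a putative outlier eigenvalue, so no contradiction can be extracted this way (the failure gets worse at larger $\eta$, where an outlier contributes only $O(1/N)$ to $\im\ul{G}$). The paper obtains $\|H\|=O(1)$ from the moment method (citing \cite[Lemma~4.3]{EKYY1}), and you should do the same. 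Second, the bound $\max_{w\in\cal N}\im\ul{G}(w,\ii\eta)\prec N^{-1/2}$ is wrong: $\im\ul{G}\le\im m+|\ul{G}-m|\prec N^{-1/2}+N^{-\nu-\delta'}$, and since $\nu,\delta'$ are small, the local-law error $N^{-\nu-\delta'}$ dominates. Fortunately $N^{-\nu-\delta'}\ll N^{-\delta'}/2$ still holds (for any fixed $\epsilon<\nu$, $N^{-\nu-\delta'+\epsilon}<N^{-\delta'}/2$ eventually), so the contradiction survives with the corrected bound; you should restate the display accordingly and take $\epsilon<\nu$ rather than $\epsilon<1/2-\delta'$. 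A final cosmetic point: Theorem~\ref{thmHstrong}(ii) must be applied with an internal parameter $\delta_0<\delta'$ so that $\eta=N^{-1+\delta'}$ actually lies in $\b S^{(2)}_{\delta_0}$, since the theorem's domain requires $\eta\ge N^{-1+\delta_0}$.
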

\begin{proof}
Fix $\nu \in (0,\delta/100)$. By Theorem \ref{thmHstrong} (ii) and Lemma \ref{lemma4.11} (iv), together with the fact that $\ul{G}$ is $N^3$-H\"{o}lder continuous in the variables $w$ and $\eta$, we get
	\[
	\im \ul{G} \prec \frac{1}{N^{1+\nu}\eta}
	\] 
	simultaneously for all $(w,\ii\eta)\in \b S^{(2)}_\delta$. This means for $1+N^{-1/2+\delta}\leq|w|\leq \delta^{-1} $, with very high probability, $H_w$ has no zero eigenvalue, and $B-w$ has no zero singular value. Thus with very high probability, no eigenvalue of $B$ lies in the ring $\{w:1+N^{-1/2+\delta}\leq|w|\leq \delta^{-1}\}$.
	
	On the other hand, using the moment method, it is not hard to see that $\|H\|= O(1)$ with very high probability (see e.g.\,\cite[Lemma 4.3]{EKYY1} for the proof of a similar result), and thus the spectral radius of $B$ is also bounded. This finishes the proof.
\end{proof}
For $(w,\ii\eta) \in \b S_\delta$, we denote 
\begin{equation} \label{3.12}
P(x)\equiv P_{w,\eta}(x)\deq x^3+2\ii \eta x^2+(1-\eta^2-|w|^2)x+\ii \eta\,.
\end{equation}
Clearly from \eqref{mmm}, $m$ satisfies $P(m)=0$. The main step in showing Theorem \ref{thmHstrong} is the following strong self-consistent equation of $H_w$, where $w$ is near the spectral edge.

\begin{proposition} \label{prop4.1}
Fix $\delta\in (0,\xi/100)$, and let $(w,\ii\eta)\in \b S_\delta$. Denote $g\deq \ul{G}$. Suppose that $|g-m|\prec \Lambda$ for some deterministic $\Lambda \in [N^{-1},N^{-\delta}]$ at $(w,\ii\eta)$. Then at $(w,
\ii\eta)$ we have
\[
P(g) \prec \frac{(\Lambda+\im {m})^2}{N\eta}+\frac{(\Lambda+\im m)^{1/2}}{N^{5/2}\eta^{5/2}}+\frac{(\Lambda+\im m)^{1/2}\kappa^{3/4}}{N\eta}+\frac{\Lambda^3+(\im {m})^3+\eta+\eta^{1/3}\kappa}{q^2}+\frac{1}{N\eta^{1/3}} \eqd \cal E_1\,.
\]
\end{proposition}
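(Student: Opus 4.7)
The plan is to bound $P(g)$ via the high-moment method: for each fixed $p\in\bb N$, show $\bb E|P(g)|^{2p}\le C_p\cal E_1^{2p}$ and then conclude via Markov's inequality. The starting point is the block form of the resolvent identity $(H_w-\ii\eta)G=I$, which yields
\[
\sum_{i,k} B_{ik}G_{k'i} \;=\; N(1+\ii\eta g) + w\sum_{i} G_{i'i}
\]
and its companion in the $\alpha$-block. Applying the cumulant expansion of Lemma~\ref{cumulant} to the random left-hand side, the $\ell=1$ (variance) piece reproduces the deterministic $Ng^2$ term that combines into the self-consistent equation, while the residual off-diagonal part $\cal F$ and the higher-cumulant contributions $\cal N_3,\cal N_4,\dots$ (controlled by $\cal C_{\ell+1}(B_{ij})$ via Lemma~\ref{Tlemh}, with truncation error \eqref{R_l+1}) constitute the fluctuation. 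Multiplying by $\ii\eta+g$ and eliminating $\mathfrak g=N^{-1}\sum_i G_{i'i}$ exactly as in the proof of Theorem~\ref{theorem 4.1} yields the exact relation
\[
P(g) \;=\; (\ii\eta+g)\bigl(\cal F+\cal N_3+\cal N_4+\dots\bigr)+\cal R,
\]
with $|\ii\eta+g|=O(\im m+\Lambda)$ by \eqref{um}.

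Next I would bound $\bb E|\cal F|^{2p}$ and $\bb E|\cal N_{\ell+1}|^{2p}$ through an iterated high-moment template $\bb E[\,\cdot\,P(g)^{p-1}\ol{P(g)}^p\,]$, repeatedly applying Lemma~\ref{cumulant} to each new $B$-linear factor and closing the recursion. The fluctuation $\cal F\sim N^{-2}\sum_{i,k}G_{k'i}^2$ has its $2p$-th moment controlled by Ward \eqref{wardd}: each pairing yields $\sum_k|G_{k'i}|^2\prec(\Lambda+\im m)/\eta$, which, combined with the diagonal bound \eqref{4q2} and the $(\ii\eta+g)$ prefactor, gives the first term $(\Lambda+\im m)^2/(N\eta)$ of $\cal E_1$. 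The refinement terms $(\Lambda+\im m)^{1/2}/(N^{5/2}\eta^{5/2})$ and $(\Lambda+\im m)^{1/2}\kappa^{3/4}/(N\eta)$ appear after replacing a Ward pair by the generalized Ward identity of Lemma~\ref{lemma4.4} and invoking \eqref{um}; the cusp-intrinsic scale $1/(N\eta^{1/3})$ corresponds to the regime where $\Lambda$ is dominated by $\im m\asymp\eta^{1/3}$.

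The higher-cumulant contributions $\cal N_{\ell+1}$ for $\ell\ge 2$ carry prefactor $\cal C_{\ell+1}(B_{ij})=O(N^{-1}q^{-\ell+1})$ together with $\ell+1$ factors of $G$. Here the cusp singularity is the decisive asset: by \eqref{4q2} every diagonal $G_{\hat\imath\hat\imath}\prec\im m+\Lambda\asymp\kappa^{1/2}+\eta^{1/3}$, so each all-diagonal product of $G$'s brings the small factor $|m|^{\ell+1}$. Mixed diagonal/off-diagonal terms are reduced by Ward at a cost of $(\Lambda+\im m)/\eta$ per off-diagonal pair, and derivatives of the weight $P(g)^{p-1}\ol{P(g)}^p$ inside the high-moment template contribute extra $P'(g)\asymp\kappa+\eta^{2/3}\asymp|m|^2$ smallness. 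Tracking these cancellations carefully at the fourth cumulant ($\ell=3$) produces the term $(\Lambda^3+(\im m)^3+\eta+\eta^{1/3}\kappa)/q^2$ of $\cal E_1$ via \eqref{um}; every cumulant of order $\ge 5$ is strictly subdominant by an extra factor of $(\im m+\Lambda)/q$.

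The main obstacle is the cusp instability of $P$: since $P'(m)\asymp\kappa+\eta^{2/3}$ vanishes in the cusp limit, the bound on $P(g)$ must be substantially sharper than what Proposition~\ref{prop4.2} yields via a single Cauchy--Schwarz step. Achieving this sharpness requires tracking the exact count of diagonal versus off-diagonal $G$-factors in every cumulant contribution and systematically exploiting the smallness of $|m|$ from \eqref{um}; this is precisely the paper's central observation that the cusp singularity is an \emph{asset} in the sparse regime rather than an obstacle. A secondary obstacle is that the high-moment template is iterative: each differentiation of $P(g)^{p-1}\ol{P(g)}^p$ inserts extra copies of $P(g)$ or $P'(g)$, which must be closed by a bootstrap using the a priori bound $|g-m|\prec\Lambda$. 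Once these steps are executed, the estimate $\bb E|P(g)|^{2p}\le C_p\cal E_1^{2p}$ follows and Markov's inequality completes the proof.
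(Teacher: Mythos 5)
Your plan has a structural error that would prevent the proof from closing, and it traces back to the claimed ``exact relation.''

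When you eliminate $\frak g$ from the two block relations arising from $(H_w-\ii\eta)G=I$, the algebra gives
\[
P(g)\;=\;(\ii\eta+g)P_1+P_2,\qquad
P_1=N^{-1}\!\sum_i(HG)_{ii}+g^2,\quad
P_2=-w\Bigl(N^{-1}\!\sum_i(HG)_{i'i}+g\,\frak g\Bigr),
\]
not $P(g)=(\ii\eta+g)(\cal F+\cal N_3+\dots)+\cal R$. Only the $P_1$ fluctuation acquires the prefactor $(\ii\eta+g)$; the fluctuation $P_2$ coming from the $\alpha$-block relation is multiplied by $w$, and since $|w|\asymp1$ there is no free factor $\im m+\Lambda$ attached to it. Your entire estimate relies on tapping $|\ii\eta+g|\prec\im m+\Lambda$ as an overall multiplier of every fluctuation term, but that smallness is simply not available for $P_2$. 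With the prefactor gone, your $\cal F$-bound via Ward gives only $(\Lambda+\im m)/(N\eta)$ instead of $(\Lambda+\im m)^2/(N\eta)$, and similarly for the higher cumulants — in other words, you would only reproduce the weaker rate in Proposition~\ref{prop4.2}, which is insufficient near the cusp.

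The missing piece is precisely what the paper calls cusp fluctuation averaging: in the estimate of the leading contribution to $\bb E P_2 P^{n-1}$ (the term $X_{1,1}$ there, $\sim N^{-2}\sum_{ij}\bb E\,G_{ji'}G_{ji}P^{n-1}$), one rewrites the factor $G_{ji'}$ using the resolvent identity $wG_{ji'}=-\delta_{ji}-\ii\eta G_{ji}+(GH)_{ji}$, which injects a fresh $H$, and then performs a \emph{second} cumulant expansion. It is only after this second expansion that the correct power of $\im m+\Lambda$ (and of $1/(N\eta)$) emerges. Your proposal correctly identifies the role of $|m|$ smallness via \eqref{4q2} and of the generalized Ward identity Lemma~\ref{lemma4.4} in the higher-cumulant estimates, and these observations do match the paper's treatment of $X_r$, $r\geq2$; but the acknowledgement that ``the bound on $P(g)$ must be substantially sharper'' is left unsupported — no mechanism is offered that would produce the needed extra factor on the $P_2$ side, so the argument does not go through as written.
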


The next estimate will be useful in the subsequent steps. The proof is a straight-forward application of \eqref{diff}, \eqref{wardd} and \eqref{4q2}, and we omit the details here.

\begin{lemma} \label{lempartialP}
	Let us adopt the assumptions of Proposition \ref{prop4.1}. Then
	\[
	P'(g) \prec (\im m+\Lambda+\eta+\kappa^{1/2})^2 \quad \mbox{ and } \quad P''(g) \prec \im m+\Lambda+\eta\,.
	\]
	For any fixed integer $r\geq 1$, we have
	\[
	\partial_{\i\j}^r P(g) \prec  \frac{(\im m+\Lambda+\eta+\kappa^{1/2})^2 (\im m+\Lambda)}{N\eta} \prec  (\im m+\Lambda+\kappa^{1/2}+\eta) \cal E_1\,.
	\]
\end{lemma}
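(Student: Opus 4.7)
My plan rests on three inputs: the cubic relation $P(m)=0$, the cusp-singularity asymptotic $P'(m)\asymp\kappa+\eta^{2/3}$ stated in the introduction after \eqref{smallm}, and the pointwise bound $|g|\leq|m|+|g-m|\prec \im m+\Lambda$ following from $m=\ii\,\im m$ (Lemma \ref{lemma4.11}(iv)) together with the hypothesis $|g-m|\prec\Lambda$. Since $P''(g)=6g+4\ii\eta$, the second bound is immediate: $|P''(g)|\prec \im m+\Lambda+\eta$. For $P'(g)$, I split $P'(g)=P'(m)+(P'(g)-P'(m))$; as $P'$ is quadratic, $P'(g)-P'(m)=(g-m)(3g+3m+4\ii\eta)\prec\Lambda(\im m+\Lambda+\eta)$. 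Combining with $P'(m)\asymp\kappa+\eta^{2/3}$, the claim reduces to the algebraic fact $\kappa+\eta^{2/3}\lesssim(\im m+\kappa^{1/2}+\eta)^2$, which I would verify regime-by-regime: on $\b S^{(1)}_\delta$, Lemma \ref{lemma4.11}(iv) gives $(\im m)^2\asymp\kappa+\eta^{2/3}$ directly; on $\b S^{(2)}_\delta$, the constraints $\kappa\geq N^{-1/2+\delta}$ and $\eta\leq N^{-3/4+\delta}$ force $\kappa^{1/2}/\eta^{1/3}\geq N^{\delta/6}$, so $\kappa+\eta^{2/3}\asymp(\kappa^{1/2})^2$.

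For the third bound I would prove by induction on $r$ the auxiliary estimate $\partial_{\i\j}^{r}g\prec(\Lambda+\im m)/(N\eta)$ for every fixed $r\geq 1$. The base case uses \eqref{diff} to write $\partial_{\i\j}g=-(2N)^{-1}((G^2)_{\j\i}+(G^2)_{\i\j})$, after which Cauchy--Schwarz and \eqref{wardd} give
\[
|(G^2)_{\i\j}|\leq\Bigl(\sum_{\k}|G_{\i\k}|^2\Bigr)^{1/2}\Bigl(\sum_{\k}|G_{\k\j}|^2\Bigr)^{1/2}\prec\frac{\Lambda+\im m}{\eta}.
\]
The inductive step is analogous: differentiating once more via \eqref{diff} produces a constant (in $r$) number of terms, each a product of entries of $G$ in which exactly one factor depends on the summed-over index, and the same Cauchy--Schwarz$+$Ward bound applies while the remaining factors are $O_\prec(1)$ by Theorem \ref{theorem 4.1}. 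Applied to $P(g)$ via Fa\`a di Bruno, this expands $\partial_{\i\j}^r P(g)$ into a bounded number of terms of the shape $P^{(l)}(g)\prod_t\partial_{\i\j}^{s_t}g$ with $l\in\{1,2,3\}$ and $\sum_t s_t=r$. The $l=1$ contribution $P'(g)\,\partial_{\i\j}^r g$ already matches the claimed target through (1); for $l\geq 2$, at least two derivative factors bring additional copies of $(\Lambda+\im m)/(N\eta)\leq 1$, which only improve the estimate, and $(\im m+\Lambda+\eta)(\im m+\Lambda)\leq(\im m+\Lambda+\kappa^{1/2}+\eta)^2$ absorbs the $l=2,3$ contributions into the target as well.

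The main obstacle is really the regime-by-regime check in the first paragraph: in the $|w|>1$ regime $\im m\asymp\eta/(\kappa+\eta^{2/3})$ is too small to absorb the cusp scale on its own, which is precisely why the target bound carries $\kappa^{1/2}$ as a separate summand. Once this algebraic point is confirmed, everything else is routine bookkeeping with \eqref{diff}, Ward, and the boundedness of $G$.
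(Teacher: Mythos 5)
Your argument is correct and fills in exactly the ``straight-forward application of \eqref{diff}, \eqref{wardd} and \eqref{4q2}'' that the paper describes and omits: the bounds on $P'(g)$ and $P''(g)$ come from $|g|\leq|m|+|g-m|\prec\im m+\Lambda$ (using $m=\ii\,\im m$) together with $P'(m)\asymp(\im m)^2+\eta/\im m\asymp\kappa+\eta^{2/3}$, and the derivative bound reduces, via Fa\`a di Bruno, to $\partial_{\i\j}^s g\prec(\Lambda+\im m)/(N\eta)$, which you obtain from \eqref{diff}, Cauchy--Schwarz and \eqref{wardd}. One phrasing slip worth flagging: in your inductive step each term in the $\k$-sum carries \emph{two} factors with the index $\k$ (not one) --- precisely the pair that Cauchy--Schwarz and Ward bundle into a $(G^2)_{\cdot\cdot}$ entry of size $\prec(\Lambda+\im m)/\eta$ --- while the remaining factors indexed in $\{\i,\j\}$ are $O_\prec(1)$. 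You bypass the diagonal refinement \eqref{4q2} (bounding all stray $G$-entries crudely by $O_\prec(1)$), which is admissible here because the $l=1$ Fa\`a di Bruno term already produces the full target $(\im m+\Lambda+\eta+\kappa^{1/2})^2(\im m+\Lambda)/(N\eta)$ and the $l=2,3$ terms pick up extra factors of $(\Lambda+\im m)/(N\eta)\leq 1$; the remaining link $\frac{(\im m+\Lambda+\eta+\kappa^{1/2})^2(\im m+\Lambda)}{N\eta}\prec(\im m+\Lambda+\kappa^{1/2}+\eta)\cal E_1$, which you did not check, is a short case comparison against the $\frac{(\Lambda+\im m)^2}{N\eta}$, $\frac{(\Lambda+\im m)^{1/2}\kappa^{3/4}}{N\eta}$ and $\frac{1}{N\eta^{1/3}}$ pieces of $\cal E_1$ according to whether $\kappa^{1/2}\leq\im m+\Lambda$ or not.
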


In the sequel, we first prove a prior estimate, Lemma \ref{lemma4.4}, in Section \ref{sec3.3}. We then prove Proposition \ref{prop4.1} in Section \ref{sec3.4}. Finally, we deduce Theorem \ref{thmHstrong} from Proposition \ref{prop4.1} in Section \ref{sec4.3}.

\subsection{A generalized Ward identity} \label{sec3.3}
We have the following generalization of \eqref{wardd}.
\begin{lemma} \label{lemma4.4}
	Let us adopt the assumptions of Proposition \ref{prop4.1}. Then
	\begin{equation} \label{4.4.1}
\max_{\j}	\frac{1}{N}\sum_{\i} |G_{\i \j}|^4 \prec  \Big(\frac{\Lambda+\im m}{N\eta}\Big)^2+\frac{1}{N}\eqd \cal E_2\,.
	\end{equation}
\end{lemma}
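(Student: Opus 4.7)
The plan is to combine the spectral decomposition of $H_w$ with the eigenvector delocalization of Corollary~\ref{lem3.3}. Denoting by $(\lambda_a)$ and $(\b u_a)$ the eigenvalues and $L^2$-normalized eigenvectors of $H_w$, I would use $G_{\i\j}=\sum_a u_a(\i)\overline{u_a(\j)}/(\lambda_a-\ii\eta)$ to expand
\[
Y_\j\deq \sum_\i |G_{\i\j}|^4=\sum_{a,b,c,d}\frac{\overline{u_a(\j)}u_b(\j)\overline{u_c(\j)}u_d(\j)}{(\lambda_a-\ii\eta)\overline{(\lambda_b-\ii\eta)}(\lambda_c-\ii\eta)\overline{(\lambda_d-\ii\eta)}}\,R_{abcd},
\]
where $R_{abcd}\deq \sum_\i u_a(\i)\overline{u_b(\i)}u_c(\i)\overline{u_d(\i)}$. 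Corollary~\ref{lem3.3} gives the pointwise bound $|R_{abcd}|\prec N^{-1}$, while a further use of the orthonormality $\sum_\i u_a(\i)\overline{u_b(\i)}=\delta_{ab}$ yields the sharper $L^2$-type identity $\sum_{c,d}|R_{abcd}|^2=R_{aabb}\prec N^{-1}$.

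The contribution of the ``diagonally paired'' configurations $(a=b,\,c=d)$ and $(a=d,\,b=c)$ is immediate: substituting $|R_{aacc}|,|R_{abba}|\prec N^{-1}$ and using the spectral Ward identity $\sum_a|u_a(\j)|^2/|\lambda_a-\ii\eta|^2=\im G_{\j\j}/\eta\prec(\Lambda+\im m)/\eta$ (which follows from \eqref{wardd}--\eqref{4q2}), these paired terms are bounded by $\prec N^{-1}\bigl((\Lambda+\im m)/\eta\bigr)^2$, matching the target after dividing by $N$.

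For the remaining non-pairing configurations, I would apply Cauchy--Schwarz in the $(c,d)$ sum using the $L^2$-identity $\sum_{c,d}|R_{abcd}|^2=R_{aabb}$, and then handle the outer $(a,b)$ sum via a second Cauchy--Schwarz combined with the Parseval identity $\sum_{a,b}|S_{ab}|^2=Y_\j$ for $S_{ab}\deq\sum_\i|G_{\i\j}|^2u_a(\i)\overline{u_b(\i)}$ (a consequence of the unitary invariance of the Frobenius norm). This produces a self-improving bound that leads to $Y_\j\prec(\Lambda+\im m)^2/(N\eta^2)+1$, with the additive $1/N$ in \eqref{4.4.1} absorbing the fallback diagonal contributions $|G_{\j\j}|^4$ and $|G_{\j\pm N,\j}|^4$, each of which is $\prec 1$ but enters only as a single summand.

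The principal technical obstacle is extracting the $N^{-1}$ improvement in the non-pairing contribution: a naive Cauchy--Schwarz yields only the trivial $Y_\j\le\bigl(\sum_\i|G_{\i\j}|^2\bigr)^2\prec(\Lambda+\im m)^2/\eta^2$, which falls short of the claim by a factor of $N$. Recovering this improvement is the essential content of this generalized Ward identity, and rests on combining the orthonormality of the eigenvectors of $H_w$ with the pointwise delocalization bound from Corollary~\ref{lem3.3}.
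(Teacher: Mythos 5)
The approach does not work, and the obstacle is more fundamental than a missing technical step. Once $H_w$ is fixed, the spectral decomposition $G_{\i\j}=\sum_a u_a(\i)\overline{u_a(\j)}/(\lambda_a-\ii\eta)$ is a tautology: setting $p_a\deq \overline{u_a(\j)}/(\lambda_a-\ii\eta)$ one has $\sum_a p_a u_a(\i)=G_{\i\j}$, and your tensor sum $\sum_{a,b,c,d}p_a\overline{p_b}p_c\overline{p_d}R_{abcd}$ is therefore identically equal to $Y_\j=\sum_\i|G_{\i\j}|^4$ with no information gained. In particular, after your first Cauchy--Schwarz in $(c,d)$ you obtain precisely $\sum_{c,d}p_c\overline{p_d}R_{abcd}=S_{ab}$, so $Y_\j=\sum_{a,b}p_a\overline{p_b}S_{ab}$; your second Cauchy--Schwarz combined with $\sum_{a,b}|S_{ab}|^2=Y_\j$ then gives $|Y_\j|\le\bigl(\sum_a|p_a|^2\bigr)Y_\j^{1/2}$, which rearranges to $Y_\j\le\bigl(\sum_a|p_a|^2\bigr)^2=(\im G_{\j\j}/\eta)^2$. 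That is exactly the trivial Ward bound you yourself identified as falling a factor of $N$ short; the ``self-improving'' iteration terminates at this fixed point and does not improve.

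The deeper issue is that the target estimate is a genuinely probabilistic statement that cannot follow from the deterministic data (orthonormality of $u_a$, the $L^\infty$ bound $\|u_a\|_\infty\prec N^{-1/2}$, and the Ward identity). Those inputs give pointwise control $|R_{abcd}|\prec N^{-1}$ and the averaged control $\sum_{c,d}|R_{abcd}|^2=R_{aabb}\prec N^{-1}$, but the sum over $(c,d)$ is weighted by $p_c\overline{p_d}$, and nothing in the deterministic structure prevents this weighting from concentrating on those $(c,d)$ for which $R_{abcd}$ is at its extreme $\asymp N^{-1}$. Equivalently: the column $(G_{\i\j})_\i$ could, consistently with complete eigenvector delocalization, have its $\ell^2$-mass $\im G_{\j\j}/\eta$ concentrated on $O(1)$ indices, in which case $Y_\j\asymp(\im G_{\j\j}/\eta)^2$ saturates the trivial bound. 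The paper's Lemma~\ref{lemma4.4} rules this out, but only with high probability and only because of the independence of the entries of $B$. Correspondingly, the paper proves it by a moment-method argument: set $\cal G=N^{-1}\sum_i|G_{i\j}|^4$, take $\bb E\cal G^n$, use the resolvent identity $\bar w G_{i\j}=-\delta_{i'\j}-\eta G_{i'\j}+(HG)_{i'\j}$ to expose a factor of $H$, and apply the cumulant expansion (Lemma~\ref{cumulant}); the Gaussian-like term $W_{1,1}$ together with \eqref{um} reproduces (up to lower order) the main piece $-m\cdot(\cdots)$, the term $W_{1,0}$ yields the self-improving factor $\Phi\cal E_2^{1/4}\bb E\cal G^{n-5/4}$, and the higher cumulants are killed by powers of $q^{-1}$. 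None of that structure is visible in the eigenbasis, which is why the proposal cannot close.
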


\begin{proof}
Fix an index $\j$. Let us denote $\cal G\deq N^{-1}\sum_{i} |G_{i \j}|^4$ and $\cal G_*\deq\max_{\j} N^{-1}\sum_{\i} |G_{\i\j}|^4$. Fix a positive integer $n$. We shall prove the lemma by showing that
	\begin{equation} \label{4q6}
		\bb E \cal G^n \prec \sum_{a=1}^n (\cal E_2+N^{-\delta}\Phi)^a\bb E \cal G^{n-a}+\Phi \cal E_2^{1/4} \bb E \cal G^{n-5/4}
	\end{equation}
provided $\cal G_* \prec \Phi$ for some deterministic $\Phi \in [N^{-1},N^4]$. More precisely, \eqref{4q6} and H\"{o}lder's inequality imply
\[
\bb E \cal G^{n} \prec \sum_{a=1}^n (\cal E_2+N^{-\delta}\Phi+\Phi^{4/5}\cal E_2^{1/5})^a(\bb E \cal G^{n})^{(n-a)/n}\,.
\]
Since $n$ is arbitrary, we get $\cal G \prec \cal E_2+N^{-\delta}\Phi+\Phi^{4/5}\cal E_2^{1/5}$. Similarly, we can also show that
\[
\frac{1}{N}\sum_{\alpha} |G_{\alpha \j}|^4 \prec \cal E_2+N^{-\delta}\Phi+\Phi^{4/5}\cal E_2^{1/5}\,.
\] 
Take the maximum over $\j$, we have
\begin{equation} \label{4q7}
	\cal G_* \prec \cal E_2+N^{-\delta}\Phi+\Phi^{4/5}\cal E_2^{1/5}
\end{equation}
provided $\cal G_* \prec \Phi$. Iterating \eqref{4q7} yields the desired result.

Let us turn to the proof of \eqref{4q6}. By the resolvent identity, we have $\bar{w}G_{i\j}=-\delta_{i'\j}-\ii\eta G_{i'\j}+(HG)_{i'\j}$. As $|w|\geq 1/2$, we have
\begin{align}
	\bb E \cal G^n=&\,\frac{1}{N\bar{w}} \sum_i \bb E G_{i\j}G^{*2}_{\j i} (-\delta_{i'\j}-\ii\eta G_{i'\j}+(HG)_{i'\j})\cal G^{n-1}\nonumber\\	
	=&\,\frac{1}{N\bar{w}}\sum_{i} \bb E G_{i\j}G^{*2}_{\j i}  (HG)_{i'\j}\cal G^{n-1}+O_{\prec}(N^{-1})\bb E \cal G^{n-1} \label{4.7}\\	
	=&\,\frac{1}{N\bar{w}}\sum_{r=1}^{\ell}\sum_{ik} \frac{1}{r!}\cal C_{r+1}(H_{i'k}) \bb E\partial_{i'k}^r (G_{i\j}G^{*2}_{\j i} G_{kj}\cal G^{n-1})+\cal R_{\ell+1}+O_{\prec}(N^{-1})\bb E \cal G^{n-1} \nonumber\,,
\end{align}
where in the first step we used \eqref{wardd}, and in the second step we used Lemma \ref{cumulant}. 

The second term on RHS of \eqref{4.7}  is the remainder term. Following a standard argument (e.g. \cite[Section 4.3]{HK}), one can show that for any fixed $D>0$, there is a fixed $\ell>0$ such that $\cal R_{\ell+1} =O(N^{-D})$. For the rest of the paper, we shall always assume the remainder term is negligible for large enough $\ell$. As a result,
	\begin{align}
	&\bb E \cal G^n=\frac{1}{N\bar{w}}\sum_{r=1}^{\ell}\sum_{ik} \frac{1}{r!}\cal C_{r+1}(H_{i'k}) \bb E\partial_{i'k}^r (G_{i\j}G^{*2}_{\j i} G_{k\j}\cal G^{n-1})+O(N^{-n})+O_{\prec}(N^{-1})\bb E \cal G^{n-1}\nonumber\\
	&= \frac{1}{N\bar{w}}\sum_{r=1}^{\ell}\sum_{r_1=0}^r\sum_{ik}\frac{1}{r!}\cal C_{r+1}(H_{i'k})  {r\choose r_1}\bb E\partial_{i'k}^{r_1} (G_{i\j}G^{*2}_{\j i} G_{k\j})\partial_{i'k}^{r-r_1}(\cal G^{n-1})+O(N^{-n})+N^{-1}\bb E \cal G^{n-1}\nonumber\\
	&\eqd \sum_{r=1}^{\ell}\sum_{r_1=0}^rW_{r,r_1}+O(N^{-n})+O_{\prec}(N^{-1})\bb E \cal G^{n-1}\label{4.99}\,,
	\end{align}
where in the second step we used Lemma \ref{Tlemh}. Note that by \eqref{diff}, we have the estimate
\begin{equation} \label{4q10}
\partial^l_{i'k}\cal G \prec \cal G_* \prec \Phi \quad \mbox{and} \quad N^{-2}\sum_{ik}\bb E\partial_{i'k}^{l} (G_{i\j}G^{*2}_{\j i} G_{k\j}) \prec \cal G_*\prec \Phi
\end{equation}
for all fixed $l\geq 0$. Now for fixed $(r,r_1)$, let us estimate $W_{r,r_1}$. We split into three cases.

\textit{Case 1.} When $r\geq 2$, by Lemma \ref{Tlemh} and \eqref{4q10}
\begin{equation} \label{4w11}
	\begin{aligned}
		W_{r,r_1} &\prec \frac{1}{N^2q^{r-1}} \sum_{ik} \bb E |\partial_{i'k}^{r_1} (G_{i\j}G^{*2}_{\j i} G_{k\j})\partial_{i'k}^{r-r_1}(\cal G^{n-1})|\\
		&\prec  \frac{1}{q^{r-1}}\cdot \Phi\cdot \sum_{a=1}^{n\wedge (r+1)}\Phi^{a-1} \bb E \cal G^{n-a} \prec  \sum_{a=1}^{n}q^{-a/3}\Phi^a \bb E \cal G^{n-a}\prec \sum_{a=1}^{n}(N^{-\delta}\Phi)^a \bb E \cal G^{n-a}\,.
	\end{aligned}
\end{equation}
Here in the last two steps we used $3(r-1)\geq r+1\geq a$ and $q^{1/3}=N^{\xi/3}> N^{10\delta}$ respectively.

\textit{Case 2.} When $(r,r_1)=(1,0)$, by \eqref{wardd} and \eqref{4q10} we have
\begin{equation} \label{4q11}
	\begin{aligned}
	W_{1,0} &\prec \frac{1}{N^2} \sum_{ik} \bb E\bigg[ |G_{i\j}G^{*2}_{\j i} G_{k\j}| \cdot \Phi \cal G^{n-2}\bigg] \prec \frac{\Phi}{N} \bigg(\frac{\im m+\phi}{N\eta}\bigg)^{1/2} \sum_i\bb E |G_{i\j}G^{*2}_{\j i}\cal G^{n-2}|\\
	&\prec \Phi \bigg(\frac{\im m+\phi}{N\eta}\bigg)^{1/2} \bb E \cal G^{n-5/4}\leq \Phi \cal E_2^{1/4} \bb E \cal G^{n-5/4}\,.
	\end{aligned}	
\end{equation}
Here in the third step we used H\"{o}lder's inequality.

\textit{Case 3.} When $(r,r_1)=(1,1)$, by \eqref{diff} and \eqref{wardd} we get
\begin{equation} \label{4q13}
	\begin{aligned}
	W_{1,0}&=-\frac{1}{N^2\bar{w}}\sum_{ik} \bb E G_{i\j}G^{*2}_{\j i}G_{i'\j}G_{kk} \cal G^{n-1}+O_{\prec}(\cal E_2)\bb E\cal G^{n-1}\\
	&=-\frac{m}{N\bar{w}}\sum_{i} \bb E G_{i\j}G^{*2}_{\j i}G_{i'\j} \cal G^{n-1}+O_{\prec}(\cal E_2+N^{-\delta}\Phi)\bb E\cal G^{n-1}=O_{\prec}(\cal E_2+N^{-\delta}\Phi)\bb E\cal G^{n-1}\,,
	\end{aligned}
\end{equation}
where in the second step we used Theorem \ref{theorem 4.1}, and in the third step we used Lemma \ref{lemma4.11} (iv) and H\"{o}lder's inequality. Inserting \eqref{4w11} -- \eqref{4q13} into \eqref{4.99}, we get \eqref{4q6} as desired. This finishes the proof.
\end{proof}

\subsection{Proof of Proposition \ref{prop4.1}} \label{sec3.4}
Fix an even integer $n\geq 2$, and abbreviate $P\equiv P(g)$.  Proposition \ref{prop4.1} follows directly from
\begin{equation} \label{3.3}
	\bb E |P|^{n} \prec \sum_{a=1}^{n}\cal E_1^a\bb E |P|^{n-a}
\end{equation}
and H\"{o}lder's inequality. In the sequel, we shall ignore the absolute value on LHS of \eqref{3.3}, which plays no role in the estimates. More precisely, we will show that
\begin{equation} \label{3.4}
	\bb E P^{n} \prec \sum_{a=1}^{n}\cal E_1^a\bb E |P|^{n-a}\,.
\end{equation}
By the resolvent identity and Lemma \ref{lemma4.11} (ii), we can split
\[
P=(\ii \eta+g)\Big(N^{-1}\sum_i(HG)_{ii}+g^2\Big)-w\Big( N^{-1}\sum_i(HG)_{i'i}+gN^{-1}\sum_i G_{i'i}\Big)\eqd (\ii \eta+g)P_1+P_2\,.
\]
The estimate \eqref{3.4} follows from
\begin{equation} \label{noncusp}
	\bb E (\ii \eta+g)P_1P^{n-1} \prec \sum_{a=1}^{n}\cal E_1^a\bb E |P|^{n-a}
\end{equation}
and
\begin{equation} \label{cusp}
	\bb E P_2P^{n-1} \prec \sum_{a=1}^{n}\cal E_1^a\bb E |P|^{n-a}\,.
\end{equation}
Comparing to $P_2$, the term $(\ii \eta+g)P_1$ contains the additional factor $\ii \eta+g\prec \im m+\Lambda$, which contributes extra smallness to the estimate. As a result, the proof of \eqref{cusp} is more involved than that of \eqref{noncusp}. One key idea of showing \eqref{cusp} is the cusp fluctuation averaging introduced in  \cite{EKS2020,AEK5}. We shall only give detailed steps in showing \eqref{cusp}, and omit the proof of the simpler case \eqref{noncusp}.

By Lemma \ref{cumulant}, we have
	\begin{align}
		\bb E P_2P^{n-1}& =-\frac{w}{N}\sum_{i j} \bb E H_{i'j}G_{ji} P^{n-1}-\frac{w}{N} \sum_{i} \bb EgG_{i'i} P^{n-1}\nonumber\\
		&=-\frac{w}{N}\sum_{r=1}^{\ell}\sum_{ij} \frac{1}{r!} \cal C_{r+1}(H_{i'j}) \bb E  \partial_{i'j}^r (G_{ji}P^{n-1})-\frac{w}{N} \sum_{i} \bb EgG_{i'i} P^{n-1}+O_{\prec}(\cal E_1^n)\label{4.5}\\
		&\eqd \sum_{r=1}^\ell X_r-\frac{w}{N} \sum_{i} \bb EgG_{i'i} P^{n-1}+O_{\prec}(\cal E_1^n)\nonumber\,. 
	\end{align}
The proof of \eqref{cusp} then follows from the next lemma.

\begin{lemma} \label{lemma4.3}
	We have
	\begin{equation} \label{4.31}
		X_1-\frac{w}{N} \sum_{i} \bb EgG_{i'i} P^{n-1} \prec  \sum_{a=1}^{n}\cal E_1^a\bb E |P|^{n-a}
	\end{equation}
and
\begin{equation}\label{4.32}
		X_r  \prec  \sum_{a=1}^{n}\cal E_1^a\bb E |P|^{n-a} 
\end{equation}
for $r=2,3,...,\ell$.	
\end{lemma}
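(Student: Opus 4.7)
The plan starts from the cumulant expansion \eqref{4.99} of the excerpt, which splits $\bb E P_2 P^{n-1}$ as $\sum_r X_r - \tfrac{w}{N}\sum_i \bb E g G_{i'i}P^{n-1} + O_\prec(\cal E_1^n)$. The strategy is to identify the Gaussian ($r=1$) cancellation that reproduces the counter-term exactly, and to bound every remaining contribution by $\sum_{a=1}^n\cal E_1^a\bb E|P|^{n-a}$. Three ingredients enter: the ordinary Ward identity \eqref{wardd}, the generalized Ward identity of Lemma \ref{lemma4.4} (the technical heart of cusp fluctuation averaging), and the derivative bounds for $P$ from Lemma \ref{lempartialP}. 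Higher-order terms $X_r$, $r\geq 2$, are handled using the cumulant decay $|\cal C_{r+1}(H_{i'j})|=O(1/(Nq^{r-1}))$ from Lemma \ref{Tlemh}.

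For \eqref{4.31}, apply the Leibniz rule $\partial_{i'j}(G_{ji}P^{n-1}) = (\partial_{i'j}G_{ji})P^{n-1} + G_{ji}\partial_{i'j}P^{n-1}$ and \eqref{diff} to obtain $\partial_{i'j}G_{ji} = -G_{ji'}G_{ji} - G_{jj}G_{i'i}$. The diagonal piece $G_{jj}G_{i'i}$ telescopes via $\sum_j G_{jj} = N\ul{G} = Ng$ (from Lemma \ref{lemma4.11}(ii)), producing exactly $\tfrac{w}{N}\sum_i \bb E gG_{i'i}P^{n-1}$, which cancels the counter-term in \eqref{4.31}. Two residual terms remain. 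The term $-\tfrac{w}{N^2}\sum_{ij}\bb E G_{ji}\partial_{i'j}P^{n-1}$ is handled by iterating the chain rule on $\partial_{i'j}P^{n-1}$, bounding $\partial_{i'j}P \prec (\im m + \Lambda + \kappa^{1/2}+\eta)\cal E_1$ via Lemma \ref{lempartialP}, using Cauchy--Schwarz with Ward to control $\tfrac{1}{N^2}\sum_{ij}|G_{ji}|$, and absorbing $|P|$ factors with H\"{o}lder's inequality; the iteration across $P^{n-2}, P^{n-3},\ldots$ yields the template. The term $\tfrac{w}{N^2}\sum_{ij}\bb E G_{ji'}G_{ji}P^{n-1}$ is the delicate one and is treated via cusp fluctuation averaging, as discussed below.

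For \eqref{4.32} with $r\geq 2$, the cumulant bound $|\cal C_{r+1}(H_{i'j})|=O(1/(Nq^{r-1}))$ provides a scale gain of $q^{-(r-1)}$. Expanding $\partial_{i'j}^r(G_{ji}P^{n-1})$ by the Leibniz rule and \eqref{diff}, each resulting monomial is a product of $r_1+1$ Green function entries from $\partial_{i'j}^{r_1}G_{ji}$ times derivatives of $P^{n-1}$. By the diagonal cusp bound \eqref{4q2} and Theorem \ref{theorem 4.1}, every in-block entry $G_{\hat\imath\hat\imath}$ carries cusp smallness $\im m + \Lambda$, while mixed entries are merely $O(1)$. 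Ward-type averaging on the remaining summed indices and H\"{o}lder's inequality absorb factors of $|P|$. The leading contribution arises at $r=2$ and is absorbed into the sparse term $(\Lambda^3 + (\im m)^3 + \eta + \eta^{1/3}\kappa)/q^2$ of $\cal E_1$; larger $r$ is strictly smaller because $q^{-(r-1)}\ll q^{-2}$, and because the odd-cumulant cancellations alluded to in the excerpt further suppress them through the algebraic structure of $G$.

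The principal obstacle is the term $\tfrac{w}{N^2}\sum_{ij}\bb E G_{ji'}G_{ji}P^{n-1}$. A naive bound via Cauchy--Schwarz and the ordinary Ward identity yields only $\tfrac{\Lambda+\im m}{N\eta}\bb E|P|^{n-1}$, which exceeds $\cal E_1\bb E|P|^{n-1}$ by a factor of $\Lambda + \im m$. The rescue comes from Lemma \ref{lemma4.4}, which supplies the fourth-moment average $\tfrac{1}{N}\sum_i|G_{ij}|^4\prec \cal E_2$; combined with H\"{o}lder's inequality and the conjugation relation $G_{ji'} = \overline{G_{i'j}}$ from Lemma \ref{lemma4.11}(ii), this provides the missing $\Lambda + \im m$ factor through cusp fluctuation averaging. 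It is precisely for this step that Lemma \ref{lemma4.4} was established first in Section \ref{sec3.3}.
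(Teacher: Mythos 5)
Your overall skeleton is correct: the $r=1$ Gaussian term of the cumulant expansion splits as in \eqref{4.9}, the diagonal piece $-G_{jj}G_{i'i}$ telescopes via $\sum_j G_{jj}=Ng$ to reproduce exactly the counter-term, and the remaining work is to bound $X_{1,1}=\tfrac{w}{N^2}\sum_{ij}\bb E G_{ji'}G_{ji}P^{n-1}$, $X_{1,2}=-\tfrac{w(n-1)}{N^2}\sum_{ij}\bb E G_{ji}(\partial_{i'j}P)P^{n-2}$, and the higher-cumulant pieces. The problem is that the mechanism you propose for closing those estimates does not work, and it is not what the paper does.

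For $X_{1,1}$ you propose invoking Lemma \ref{lemma4.4} (the $L^4$ average) together with H\"older. That cannot close the gap. The best such an argument gives is
\[
\frac{1}{N^2}\sum_{ij}|G_{ji'}G_{ji}| \;\le\; \Big(\frac{1}{N^2}\sum_{ij}|G_{ji'}|^2|G_{ji}|^2\Big)^{1/2}\;\prec\; \cal E_2^{1/2}\;\asymp\; \frac{\Lambda+\im m}{N\eta}+\frac{1}{N^{1/2}}\,,
\]
and one checks that neither term is $\prec\cal E_1$: the first misses by a factor $\Lambda+\im m$, and the second requires $\eta\lesssim N^{-3/2}$, which never holds on $\b S_\delta$. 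You also misattribute the ``cusp fluctuation averaging'' to Lemma \ref{lemma4.4}. In the paper that phrase names a different device: the resolvent identity $wG_{ji'}=-\delta_{ji}-\ii\eta G_{ji}+(GH)_{ji}$ is used to convert the lone factor $G_{ji'}$ into $(GH)_{ji}$, which is then re-expanded with a \emph{second} cumulant expansion. It is this second-layer expansion (see \eqref{4.10}--\eqref{4.17}) that produces the extra $\Lambda+\im m$ of smallness; Lemma \ref{lemma4.4} enters only in auxiliary estimates (e.g.\ $X_{2,0,1}$) where an $L^4$ average sits alongside a $q^{-1}$ prefactor. Without the resolvent identity + re-expansion step, the bound on $X_{1,1}$ does not close.

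The same gap appears in your treatment of $X_{1,2}$. Writing $\partial_{i'j}P=-\tfrac{P'(g)}{2N}\big((G^2)_{i'j}+(G^2)_{ji'}\big)$ and then applying Cauchy--Schwarz and Ward directly, as you suggest, yields a bound of order $(\im m+\Lambda+\kappa^{1/2}+\eta)^2\,\tfrac{\im m+\Lambda}{N^2\eta^2}$, which, because of the $\kappa$-dependence, is \emph{not} $\prec\cal E_1^2$ when $\kappa\asymp N^{-1/2+\delta}$ (compare with the term $\tfrac{(\Lambda+\im m)^{1/2}\kappa^{3/4}}{N\eta}$ in $\cal E_1$). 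Again the paper rescues this by applying the resolvent identity to $(G^2)_{ji'}$ and re-expanding, together with the intermediate bound \eqref{4.24} for $\sum_j|G_{ji}(G^2)_{ij}|$. Similarly, for $r=2$ the sub-term $X_{2,0,3}$ cannot be bounded by the cumulant decay alone and also requires the resolvent-identity re-expansion. So the proof is missing the central device of the lemma rather than merely streamlining it; the proposal should be revised to carry out the resolvent identity plus a second cumulant expansion wherever an isolated $G_{\j\i'}$ or $(G^2)_{\j\i'}$ factor appears.
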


In the remaining part of Section \ref{sec3.4} we prove \eqref{4.31} and \eqref{4.32} for $r=2,3$. For $r \geq 4$, the estimates of $X_r$ is easier due to Lemma \ref{lempartialP}, and we omit the details.

\subsubsection{Proof of \eqref{4.31}}  \label{section4.1}
By $\cal C_2(H_{i'j})=N^{-1}$ and \eqref{diff}, we split
\begin{equation}\label{4.9}
	\begin{aligned}
	X_1&=-\frac{w}{N^2}\sum_{ij} \bb E(\partial_{i'j} G_{ji}) P^{n-1}-\frac{w(n-1)}{N^2}\sum_{ij} \bb E G_{ji}(\partial_{i'j} P) P^{n-2}\\
	&=\frac{w}{N}\sum_{i} \bb E gG_{i'i}P^{n-1}+\frac{w}{N^2}\sum_{ij} \bb E G_{ji'}G_{ji}P^{n-1}-\frac{w(n-1)}{N^2}\sum_{ij} \bb E G_{ji}(\partial_{i'j} P) P^{n-2}\\
	&\eqd \frac{w}{N}\sum_{i} \bb E gG_{i'i}P^{n-1}+X_{1,1}+X_{1,2}\,.
		\end{aligned}
\end{equation}
Note that the first term on RHS of \eqref{4.9} gives us the cancellation in \eqref{4.31}. It remains to estimate $X_{1,1}$ and $X_{1,2}$. 

\textit{Step 1. The estimate of $X_{1,1}$.} In this estimate we make use of the cusp fluctuation averaging, which is contained in the factor $G_{ji'}$. The resolvent identity gives
$
wG_{ji'}=-\delta_{ji}-\ii \eta G_{ji}+ (GH)_{ji}$. Thus
	\begin{align}
		X_{1,1}&=-\frac{1}{N^2}\sum_{i} \bb E G_{ii}P^{n-1}-\frac{\ii \eta }{N^2}\sum_{ij} \bb E G_{ji}G_{ij}P^{n-1}+\frac{1}{N^2}\sum_{ij} \bb E (GH)_{ji}G_{ij}P^{n-1}\nonumber\\
		&=\frac{1}{N^2}\sum_{ij\alpha} \bb E G_{j\alpha}H_{\alpha i}G_{ij}P^{n-1}+O_\prec(\cal E_1) \bb E |P|^{n-1} \label{4.10}\\
		&=\frac{1}{N^2}\sum_{r=1}^{\ell}\sum_{ij\alpha} \frac{1}{r!}\cal C_{r+1}(H_{\alpha i})\bb E \partial^r_{\alpha i}(G_{j\alpha}G_{ij}P^{n-1})+O_\prec(\cal E_1) \bb E |P|^{n-1}\eqd \sum_{r=1}^{\ell} X_{1,1,r}+O_\prec(\cal E_1) \bb E |P|^{n-1}\nonumber\,,
	\end{align}
where in the second and third steps we used \eqref{wardd}, \eqref{4q2} and Lemma \ref{cumulant} respectively. We start to estimate the RHS of \eqref{4.10} for the case $r=1$. By \eqref{diff} and \eqref{wardd}, we can split $X_{1,1,1}$ into
\begin{equation} \label{beer}
	\frac{1}{N^3} \sum_{ij\alpha} \bb E\big[ (-G_{ji}G_{\alpha\alpha}G_{ij}-G_{j\alpha}G_{ii}G_{\alpha j})P^{n-1}+(n-1)G_{j\alpha}G_{ij}P^{n-2}(\partial_{\alpha i} P)\big]
+O_\prec(\cal E_1)\bb E|P|^{n-1}\,.
\end{equation}
By \eqref{wardd} and \eqref{4q2}, we get
\begin{equation} \label{4.11}
\frac{1}{N^3} \sum_{ij\alpha} \bb E (-G_{ji}G_{\alpha\alpha}G_{ij}-G_{j\alpha}G_{ii}G_{\alpha j})P^{n-1} \prec \cal E_1 \bb E |P|^{n-1}\,.
\end{equation}
In addition, \eqref{wardd} and Lemma \ref{lempartialP} implies
\begin{equation} \label{4.13}
\hspace{-0.2cm}\frac{1}{N^3} \sum_{ij\alpha} \bb E(n-1)G_{j\alpha}G_{ij}P^{n-2}(\partial_{\alpha i} P) \prec \frac{(\im m+\Lambda+\kappa^{1/2}	)\cal E_1}{N^3}\sum_{ij\alpha} \bb E |G_{ja}G_{ij}P^{n-2}| \prec \cal E_1^2 \bb E|P|^{n-2}\,.
\end{equation}
Combining \eqref{beer}, \eqref{4.11} and \eqref{4.13} results
\begin{equation} \label{4.14}
	X_{1,1,1} \prec \cal E_1 \bb E |P|^{n-1}+\cal E_1^2 \bb E |P|^{n-2}\,.
\end{equation}
For $r\geq 2$, the estimates are similar to those of $r=1$. More precisely, 
\begin{equation} \label{41}
X_{1,1,r} \prec \frac{1}{N^3q^{r-1}}\sum_{r_1=0}^r \sum_{ij\alpha}\bb E \partial^{r_1}_{\alpha i} (G_{j\alpha}G_{ij}) \partial^{r-r_1}_{\alpha j} (P^{n-1})\eqd \sum_{r_1=0}^{r}X_{1,1,r,r_1}\,.
\end{equation}
When $r_1\leq r-1$, by Lemma \ref{lempartialP} we have 
$\partial^{r-r_1}_{\alpha j} (P^{n-1})\prec (\im m+\Lambda+\kappa^{1/2}	)\sum_{a=2}^{n}\cal E_1^{a-1} |P|^{n-a}$, 
and together with \eqref{wardd} we get 
\begin{equation} \label{4.16}
X_{1,1,r,r_1}\prec \frac{\im m+\Lambda}{N\eta}\cdot (\im m+\Lambda+\kappa^{1/2}	)\sum_{a=2}^{n}\cal E_1^{a-1} \bb E |P|^{n-a} \prec \sum_{a=2}^{n}\cal E_1^{a} \bb E |P|^{n-a}\,.
\end{equation}
When $r_1=r$, by \eqref{diff}, we see that in each term of $\partial^{r_1}_{\alpha i} (G_{j\alpha}G_{ij})$, there are either three entries of $G$ that we can apply \eqref{wardd}, or there are only two entries of $G$ that we can apply \eqref{wardd} but there is at least one diagonal entry of $G$ that we can estimate by \eqref{4q2}. As a result, $\sum_{ij\alpha} \partial^{r_1}_{\alpha i} (G_{j\alpha}G_{ij}) \prec N^3 \cal E_1$, and
\begin{equation} \label{4.17}
	X_{1,1,r,r}\prec \cal E_1 \bb E|P|^{n-1}\,.
\end{equation}
Inserting \eqref{4.16} and \eqref{4.17} into \eqref{41} we get $X_{1,1,r} \prec \sum_{a=1}^{n}\cal E_1^{a} \bb E |P|^{n-a}$ for all $r \geq 2$. Together with \eqref{4.10} and \eqref{4.13}, we get
\begin{equation} \label{4.22}
	X_{1,1} \prec \sum_{a=1}^{n}\cal E_1^{a} \bb E |P|^{n-a}\,.
\end{equation}

\textit{Step 2. The estimate of $X_{1,2}$.} By \eqref{diff}, we have
\begin{equation} \label{4.23}
	X_{1,2}=\frac{w(n-1)}{2N^3} \sum_{ij}\bb EG_{ji}\big((G^2)_{ji'}+(G^2)_{i'j}\big) P' P^{2n-2}\,,
\end{equation}
where $P'=P'(\ul{G})$ and the derivative is on the variable $\ul{G}$. By the resolvent identity we have
$
\bar{w}(G^2)_{ji'}=-G_{ji}-\ii \eta (G^2)_{ji}+ (G^2H)_{ji}\,.
$
Together with \eqref{wardd} and Lemma \ref{lempartialP}, we get
\begin{equation*}
	\frac{w}{N^3} \sum_{ij}\bb EG_{ji}(G^2)_{ji'} P' P^{2n-2}
	=\,\frac{w}{N^3\bar{w}}\sum_{ij}\bb EG_{ji}(-\ii \eta(G^2)_{ij}+(G^2H)_{ji}) P' P^{2n-2}+O_\prec(\cal E_1^2) \bb E|P|^{n-2}\,.
\end{equation*}
To estimate the first term on RHS of the above, we apply
\begin{equation} \label{4.24}
	\sum_{j}|G_{ji}(G^2)_{ij}| \leq 	\sum_{\j}|G_{\j i}(G^2)_{i\j}| \leq ((GG^*)_{ii}(G^2G^{*2})_{ii})^{1/2} \prec \frac{\Lambda+\im m}{\eta^2}\,,
\end{equation}
where in the second step we used the resolvent identity and \eqref{4q2}. Thus
	\begin{align}
	&\frac{w}{N^3} \sum_{ij}\bb EG_{ji}(G^2)_{ji'} P' P^{2n-2}
	=\,\frac{w}{N^3\bar{w}}\sum_{ij}\bb EG_{ji}(G^2H)_{ji} P' P^{2n-2}+O_\prec(\cal E_1^2) \bb E|P|^{n-2}\nonumber\\
	=&\,\frac{w}{N^3\bar{w}} \sum_{ij\alpha}\bb EG_{ji}(G^2)_{j\alpha}H_{\alpha i} P' P^{2n-2}+O_\prec(\cal E_1^2) \bb E|P|^{n-2}\label{4.21}\\
	=&\,\frac{w}{N^3\bar{w}}\sum_{r=1}^{\ell} \sum_{ij\alpha}\frac{1}{r!} \cal C_{r+1}(H_{\alpha i}) \partial_{\alpha i}^r(G_{ji}(G^2)_{j\alpha} P'P^{2n-2})+O_\prec(\cal E_1^2) \bb E|P|^{n-2}\eqd \sum_{r=1}^\ell X_{1,2,r}+O_\prec(\cal E_1^2) \bb E|P|^{n-2}\nonumber\,.
	\end{align}
Here in the third step we used Lemma \ref{cumulant}. For fixed $r\geq 1$, we have
\begin{equation} \label{4.48}
X_{1,2,r} \prec \frac{1}{N^4q^{r-1}}\sum_{r_1=0}^r\partial_{\alpha i}^{r_1}(G_{ji}(G^2)_{j\alpha})\partial_{\alpha i}^{r-r_1} (P'P^{2n-2}) \eqd \sum_{r_1=0}^r X_{1,2,r,r_1}\,.
\end{equation}

The remaining steps are similar to the estimates of $X_{1,1,r,r_1}$ in \eqref{41}. More precisely, when $r_1\leq r-1$,  by \eqref{diff}, \eqref{wardd} and Lemma \ref{lempartialP}, we have 
\[
\partial_{\alpha i}^{r-r_1} (P'P^{2n-2}) \prec((\im m)^2+\Lambda^2+\kappa+\eta^2) \sum_{a=3}^n((\im m+\Lambda+\kappa^{1/2}	)\cal E_1)^{a-2} |P|^{n-a}+(\im m+\Lambda+\kappa^{1/2}	)\cal E_1|P|^{n-2}
\]
 Note that each term of $\partial_{\alpha i}^{r_1}(G_{ji}(G^2)_{j\alpha})$ contains the factor $G_{jx}(G^2)_{jy}$ or $G_{jx}G_{jy}$ for some $x,y \in \{i,\alpha\}$, and we can estimate it using \eqref{4.24} or \eqref{wardd} respectively. We get
\begin{equation} \label{4.46}
	X_{1,2,r,r_1}\prec \sum_{a=2}^{n} \cal E_1^a\bb E|P|^{n-a}\,.
\end{equation}
When $r_1=r$, we can split the terms of $\partial_{\alpha i}^{r_1}(G_{ji}(G^2)_{j\alpha})$ into two cases. The first case is when the result contains the factor $G_{jx}(G^2)_{jy}$ for some $x,y \in \{i,\alpha\}$. For this factor, we can estimate it using \eqref{4.24}. In addition, there is at least one off-diagonal entry of $G$ that we can estimate by \eqref{wardd}, or one diagonal entry of $G$ that we can estimate by \eqref{4q2}. The second case is when the result contain $G_{jx}G_{jy}(G^2)_{zw}$ for some $x,y,z,w\in \{i,a\}$, and we can estimate $\sum_j G_{jx}G_{jy}$ and $(G^2)_{zw}$ both by \eqref{wardd}. Together with Lemma \ref{lempartialP}, we have
\begin{equation} \label{4.47}
		X_{1,2,r,r} \prec \cal E_1^2 \bb E|P|^{n-2}\,.
\end{equation}
Inserting \eqref{4.48} -- \eqref{4.47} into  \eqref{4.21}, we obtain
\[
\frac{w}{N^3} \sum_{ij}\bb EG_{ji}(G^2)_{ji'} P' P^{2n-2}\prec  \sum_{a=2}^{n} \cal E_1^a\bb E|P|^{n-a}\,,
\]
which finishes the estimate of the first term on RHS of \eqref{4.23}. The estimate of the other term follow in the same fashion. Thus
\begin{equation} \label{4.49}
	X_{1,2}\prec \sum_{a=2}^{n} \cal E_1^a\bb E|P|^{n-a}\,.
\end{equation}
Inserting \eqref{4.22} and \eqref{4.49} into \eqref{4.9} concludes the proof of \eqref{4.31}.

\subsubsection{The estimate of $X_2$}
Note that $\cal C_3(B_{12})=O(N^{-1}q^{-1})$, and we split
\begin{equation}\label{4.410}
	\hspace{-0.2cm}X_2=-\frac{w}{N}\sum_{ij} \frac{1}{2} \cal C_{3}(H_{i'j}) \bb E  \partial_{i'j}^2 (G_{ji}P^{n-1})
	=O\Big(\frac{1}{N^2q}\Big)\sum_{ij}  \bb E  \partial_{i'j}^{2-r} (G_{ji})\partial_{i'j}^{r}(P^{n-1})\eqd \sum_{r=0}^2 X_{2,r}\,.
\end{equation}
We first consider $X_{2,0}$. By \eqref{diff}, we get
\begin{equation*}
	\begin{aligned}
		X_{2,0} =&\,O\Big(\frac{1}{N^2q}\Big) \sum_{ij}\bb E G_{ji}G_{i'j}^2 P^{n-1}+O\Big(\frac{1}{N^2q}\Big) \sum_{ij}\bb E G_{jj}G_{i'i'}G_{ji} P^{n-1}\\
		&+O\Big(\frac{1}{N^2q}\Big) \sum_{ij}\bb E G_{jj}G_{i'i}G_{i'j} P^{n-1} \eqd X_{2,0,1}+X_{2,0,2}+X_{2,0,3}\,.
	\end{aligned}
\end{equation*}
By H\"{o}lder's inequality and Lemma \ref{lemma4.4}, we have $N^{-1}\sum_{i}|G_{ij}G^2_{i'j}| \prec \cal E_2^{3/4}$, and thus
\begin{equation*}
	X_{2,0,1} \prec q^{-1} \cal E_2^{3/4} \prec \cal E_1 \bb E |P|^{n-1}\,.
\end{equation*}
By \eqref{wardd}, \eqref{4q2} and Cauchy-Schwarz, we get 
\begin{equation*}
	X_{2,0,2}\prec \frac{1}{N^2q} N^2 (\im m+\Lambda)^2 \Big(\frac{\im m+\Lambda}{N\eta}\Big)^{1/2} \bb E |P|^{n-1} \prec \cal E_1 \bb E |P|^{n-1}\,.
\end{equation*}
To estimate $X_{2,0,3}$, note that we have the resolvent identity $wG_{i'j}=-\delta_{ij}-\ii \eta G_{ij}+(HG)_{ij}$. Together with $|w|^{-1}\leq 2$, we get
\begin{equation} \label{123}
	\begin{aligned}
	X_{2,0,3}&=O\Big(\frac{1}{N^2q}\Big) \sum_{ij}\bb E G_{jj}G_{i'i}(HG)_{ij} P^{n-1} +O_{\prec}(\cal E_1)\bb E |P|^{n-1}\\
	&=O\Big(\frac{1}{N^2q}\Big) \sum_{ij\alpha}\bb E G_{jj}G_{i'i}H_{i\alpha}G_{\alpha j} P^{n-1} +O_{\prec}(\cal E_1)\bb E |P|^{n-1}\,.
	\end{aligned}
\end{equation}
Now we apply Lemma \ref{cumulant} to the first term on RHS of \eqref{123} with $h=H_{i\alpha}$, and estimate the results by \eqref{wardd}, \eqref{4q2} and Lemma \ref{lempartialP}. This leads to the estimate $X_{2,0,3} \prec \sum_{a=1}^n \cal E_1^a \bb E|P|^{n-a}$ as desired. As a result, we have
\begin{equation} \label{3.47}
	X_{2,0} \prec \sum_{a=1}^n \cal E_1^a \bb E|P|^{n-a}\,.
\end{equation}
Now let us consider $X_{2,1}$. By \eqref{diff}, \eqref{wardd}, \eqref{4q2} and Lemma \ref{lempartialP}, we see that
\begin{equation} \label{3.48}
	\begin{aligned}
		X_{2,1}&=O\Big(\frac{1}{N^2q}\Big) \sum_{ij}\bb E G_{i'i}G_{jj}(\partial_{i'j}\ul{G})P' P^{n-2}+O\Big(\frac{1}{N^2q}\Big) \sum_{ij}\bb E G_{ji}G_{ji}(\partial_{i'j}\ul{G})P' P^{n-2}\\
		&=O\Big(\frac{1}{N^3q}\Big) \sum_{ij}\bb E G_{i'i}G_{jj}((G^2)_{i'j}+(G^2)_{ji'})P' P^{n-2}+O_{\prec}(\cal E_1^2) \bb E |P|^{n-2}\,.
	\end{aligned}
\end{equation}
To estimate the first term on RHS of \eqref{3.48}, we again use the resolvent identity $w(G^2)_{i'j}=-G_{ij}-\ii \eta (G^2)_{ij}+(HG^2)_{ij}$ and $|w|^{-1}\leq 2$, which lead to 
\begin{equation}  \label{3.49}
	\begin{aligned}
		X_{2,1}
		=O\Big(\frac{1}{N^3q}\Big) \sum_{ij\alpha}\bb E G_{i'i}G_{jj}H_{i\alpha}(G^2)_{\alpha j}P' P^{n-2}+O_{\prec}(\cal E_1^2) \bb E |P|^{n-2}\prec \sum_{a=1}^n \cal E_1^a \bb E|P|^{n-a}\,.
	\end{aligned}
\end{equation}
Here in the second step we used Lemma \ref{cumulant} with $h=H_{i\alpha}$, and estimate the results by \eqref{wardd}, \eqref{4q2} and Lemma \ref{lempartialP}. The second term on RHS of \eqref{3.48} can be estimated in the same fashion. Then let us consider the term $X_{2,2}$, which cam be split into
\[
X_{2,2}=O\Big(\frac{1}{N^2q}\Big)\sum_{ij} \bb E G_{ji} (\partial^2_{i'j} P) P^{n-2}+O\Big(\frac{1}{N^2q}\Big)\sum_{ij} \bb E G_{ji} (\partial_{i'j} P)^2 P^{n-3}\eqd X_{2,2,1}+X_{2,2,2}\,.
\]
By \eqref{diff}, the most dangerous term in $X_{2,2,1}$ is 
\begin{align*}
	&\,O\Big(\frac{1}{N^3q}\Big)\sum_{ij}\bb E G_{ji}G_{i'i'}(G^2)_{jj}P'P^{n-2}\\
	\prec&\, \frac{1}{N^3q} N^2  \Big(\frac{\im m+\Lambda}{N\eta}\Big)^{1/2}\frac{(\im m+\Lambda)^2}{\eta}(\im m+\Lambda+\eta+\kappa^{1/2})^{2} \bb E |P|^{n-2} \prec \cal E_1^2 \bb E |P|^{n-2}\,,
\end{align*}
where in the first step we used \eqref{wardd}, \eqref{4q2} and Lemma \ref{lempartialP}. By estimating other terms in $X_{2,2,1}$ in a similar fashion, we get $X_{2,2,1} \prec \cal E_1^2 \bb E |P|^{n-2}$. In addition,  by Lemma \ref{lempartialP} and \eqref{wardd}, one easily sees that $X_{2,2,2} \prec \cal E_1^3 \bb E |P|^{n-3}$. Thus $X_{2,2} \prec \cal E_1^2 \bb E |P|^{n-2}+\cal E_1^3 \bb E |P|^{n-3}$. Together with \eqref{4.410}, \eqref{3.47} and \eqref{3.49}, we get $X_{2} \prec \sum_{a=1}^n \cal E_1^a \bb E|P|^{n-a}$ as desired.

\subsubsection{The estimate of $X_3$}
Note that $\cal C_4(B_{12})=O(N^{-1}q^{-2})$, and thus
\begin{align}
X_3&=-\frac{w}{N}\sum_{ij} \frac{1}{6} \cal C_{4}(H_{i'j}) \bb E  \partial_{i'j}^3 (G_{ji}P^{n-1})\nonumber\\
&=\sum_{r=0}^3\sum_{ij} O\Big(\frac{1}{N^2q^2}\Big) \bb E  \partial_{i'j}^{3-r} (G_{ji})\partial_{i'j}^{r}(P^{n-1})\eqd \sum_{r=0}^3 X_{3,r}\label{4.40}\,.
\end{align}
Let us first estimate $X_{3,0}$. When we apply three $\partial_{i'j}$ on $G_{ji}$, there will be three types of terms: terms containing only diagonal entries of $G$, terms contain two off-diagonal entries of $G$, and terms containing four diagonal entries of $G$. All these terms can be easily estimated by Lemma \ref{lemma4.4} and \eqref{4q2}. More precisely, we have
	\begin{align}
		X_{3,0}\prec &\,\frac{1}{N^2q^2} \sum_{ij} \bb E| G_{jj}^2G_{i'i}G_{i'i'}P^{n-1}|+\frac{1}{N^2q^2} \sum_{ij} \bb E |G_{i'i}G_{jj}G^2_{i'j} P^{n-1}|\label{4q41}\\
		&+\frac{1}{N^2q^2} \sum_{ij} \bb E |G_{i'i'}G_{jj}G_{i'j}G_{ij} P^{n-1}|+\frac{1}{N^2q^2} \sum_{ij} \bb E |G_{i'j}^3G_{ij} P^{n-1}|\prec \cal E_1 \bb E |P|^{n-1}\,.\nonumber
	\end{align}
Next we estimate of $X_{3,1}$. By \eqref{diff}, we see that
\begin{equation*}
	\begin{aligned}
	X_{3,1}&\prec \frac{1}{N^3q^2}\sum_{ij} \bb E |\partial_{i'j}^2(G_{ji}) P' (G^2)_{i'j}P^{n-2}|\\
	& \prec \frac{1}{N^3q^2}\sum_{ij} \bb E |G_{jj}G_{ii'}G_{i'j} P' (G^2)_{i'j}P^{n-2}|+ \frac{1}{N^3q^2}\sum_{ij} \bb E |G_{jj}G_{i'i'}G_{ij} P' (G^2)_{i'j}P^{n-2}|\\
	&\quad + \frac{1}{N^3q^2}\sum_{ij} \bb E |G_{ij}G_{i'j}^2 P' (G^2)_{i'j}P^{n-2}|\eqd X_{3,1,1}+X_{3,1,2}+X_{3,1,3}\,.
	\end{aligned}
\end{equation*}
By \eqref{4q2} and Lemma \ref{lempartialP}, we have
\begin{align*}
X_{3,1,1}&\prec \frac{(\Lambda+\im m)((\im m)^2+\Lambda^2+\kappa	+\eta^2)}{N^3q^2} \sum_{ij}\bb E |G_{i'j}(G^2)_{i'j}P^{n-2}|\\
&\prec   \frac{(\Lambda+\im m)((\im m)^2+\Lambda^2+\kappa	+\eta^2)}{N^3q^2} \frac{N (\im m+\Lambda)}{\eta^2} \bb E |P|^{n-2}\prec \cal E_1^2 \bb E |P|^{n-2}\,,
\end{align*}
and the same estimate works for $X_{3,1,2}$. In addition, \eqref{wardd} and Lemma \ref{lempartialP} imply
\[
X_{3,1,3} \prec  \frac{(\Lambda+\im m)((\im m)^2+\Lambda^2+\kappa	+\eta^2)}{N^3\eta q^2}\sum_{ij}\bb E |G^2_{i'j}P^{n-2}| \prec \cal E_1^2 \bb E |P|^{n-2}\,.
\]
As a result,
\begin{equation} \label{wq1}
	X_{3,1} \prec \cal E_1^2 \bb E |P|^{n-2}\,.
\end{equation}
Now we estimate $X_{3,2}$. By \eqref{diff}, we have
\begin{equation*}
	\begin{aligned}
		X_{3,2} \prec &\, \frac{1}{N^2q^2}\sum_{ij} \bb E| (\partial_{i'j}G_{ji})(\partial_{i'j} P)^2 P^{n-3}|+\frac{1}{N^2q^2}\sum_{ij} \bb E| (\partial_{i'j}G_{ji})(\partial^2_{i'j} P) P^{n-2}|\\
	\prec &\, \frac{1}{N^4q^2}\sum_{ij} \bb E| G_{i'i}G_{jj}(P'(G^2)_{i'j})^2P^{n-3}|+\frac{1}{N^4q^2}\sum_{ij} \bb E |G_{ji'}G_{ji}(P'(G^2)_{i'j})^2P^{n-3}|\\
		&\,+\frac{1}{N^3q^2}\sum_{ij} \bb E |G_{i'i}G_{jj}\partial_{i'j} (P'(G^2)_{i'j}) P^{n-2}|+\frac{1}{N^3q^2}\sum_{ij} \bb E |G_{ji}G_{ji'}\partial_{i'j} (P'(G^2)_{i'j}) P^{n-2}|\\
		\eqd& X_{3,2,1}+\cdots+X_{3,2,4}\,.
	\end{aligned}
\end{equation*}
By \eqref{wardd}, \eqref{4q2} and Lemma \ref{lempartialP}, we get
\begin{align*}
	X_{3,2,1} &\prec \frac{(\Lambda+\im m)(\Lambda^2+(\im m)^2+\kappa+\eta^2)^2}{N^4q^2} \sum_{ij} \bb E |(G^2)_{i'j}^2P^{n-3}| \\
	&\prec \frac{(\Lambda+\im m)(\Lambda^2+(\im m)^2+\kappa+\eta^2)^2}{N^4q^2} \frac{N(\Lambda+\im m)}{\eta^3}\bb E |P^{n-3}| \prec \cal E_1^3 \bb E |P|^{n-3}
\end{align*}
and similarly we get $X_{3,2,2} \prec \cal E_1^3 \bb E |P|^{n-3}$. In addition, by \eqref{diff}, \eqref{wardd}, \eqref{4q2} and Lemma \ref{lempartialP} we get
\begin{align*} 
	X_{3,2,3}&\prec\frac{1}{N^3q^2}\sum_{ij} \bb E G_{ii'}G_{jj} (G_{jj}(G^2)_{i'i'} +G_{i'i'}(G^2)_{jj})P'P^{n-2}+\cal E_3^2\bb E|P|^{n-2}\\
	&\prec \frac{1}{N^3q^2}\frac{N^2(\im m+\Lambda)^3}{\eta}(\im m+\Lambda+\eta+\kappa^{1/2})^2 \bb E |P|^{n-2}+\cal E_3^2\bb E|P|^{n-2} \prec \cal E_3^2\bb E|P|^{n-2}\,,
\end{align*}
and similarly $X_{3,2,4} \prec \cal E_1^2 \bb E |P|^{n-2}$. As a result, we get 
\begin{equation} \label{4q49}
	\begin{aligned}
X_{3,2}\prec\cal E_3^2\bb E|P|^{n-2}+\cal E_3^3\bb E|P|^{n-3}\,.
	\end{aligned}
\end{equation}
Finally we estimate of $X_{3,3}$. By \eqref{wardd}, \eqref{4.24} and Lemma \ref{lempartialP}, it is easy to see that
\begin{equation} \label{4.41}
	\begin{aligned}
	X_{3,3} &\prec\frac{1}{N^3q^2}\sum_{ij}  \bb E   |G_{ji}(\partial_{i'j}^{3} P) P^{n-2}|+\sum_{a=3}^4\cal E_1^a \bb E|P|^{n-a}\\
	&\prec \frac{1}{N^3q^2}\sum_{ij}  \bb E |  G_{ji}\partial_{i'j}^{2}( P' (G^2)_{i'j}) P^{n-2}|+\sum_{a=3}^4\cal E_1^a\bb E|P|^{n-a}\\
	&\prec \frac{1}{N^3q^2}\sum_{ij}  \bb E  | G_{ji}(\partial_{i'j}^{2} (G^2)_{i'j}) P'P^{n-2}|+\sum_{a=3}^4\cal E_1^a \bb E|P|^{n-a}\,.
	\end{aligned}
\end{equation}
By \eqref{diff}, \eqref{wardd} and \eqref{4q2}, the first term on RHS of \eqref{4.41} is stochastically dominated by 
	\begin{align*}
	\frac{1}{N^3q^2} \sum_{i,j}\bb E \big|G_{ji}\big[(G^2)_{i'i'}G_{jj}G_{i'j}+(G^2)_{jj}G_{i'i'}G_{i'j}+(G^2)_{i'j}G^2_{i'j}+(G^2)_{i'j}G_{i'i'}G_{jj}\big]P'P^{n-2}\big|\prec \cal E_1^2 \bb E |P|^{n-2}
	\end{align*}
Inserting the above into \eqref{4.41} we get 
\begin{equation} \label{4.42}
	X_{3,3} \prec \sum_{a=2}^4\cal E_1^a \bb E|P|^{n-a}\,.
\end{equation}
Combining \eqref{4q41}, \eqref{wq1}, \eqref{4q49} and \eqref{4.42}, we get $X_3 \prec\sum_{a=2}^{4}\cal E_1^a\bb E |P|^{n-a}$ as desired. This finishes the proof of Lemma \ref{lemma4.3}, and thus we conclude the proof of Proposition \ref{prop4.1}.

\subsection{Stability analysis: Proof of Theorem \ref{thmHstrong}}\label{sec4.3} By Proposition \ref{prop4.1}, Taylor expansion and $P(m)=0$, we have
\begin{equation*}
		(g-m)^3+(3m+2\ii \eta)(g-m)^2+(3m^2+4\ii m\eta+1-\eta^2-|w|^2)(g-m)\prec \cal E_1\,.
\end{equation*}
From Lemma \ref{lemma4.11}, we see that
\[
3m^2+4\ii m\eta+1-\eta^2-|w|^2=2m^2+2\ii m\eta-\frac{\ii \eta}{m}\asymp (\im m)^2+\frac{\eta}{\im m}\,.
\]
By a standard stability analysis of cubic equations (e.g. \cite[Lemma 3.10]{EKS2020}) we get
\begin{equation} \label{4.26}
	g-m \prec \min \bigg\{ \cal E_1^{1/3}, \frac{\cal E_1^{1/2}}{(\im m+\eta)^{1/2}}, \frac{\cal E_1}{(\im m)^2+\eta/\im m}\bigg\}\,.
\end{equation}

(i) When $(w,\ii\eta)\in \b S^{(1)}_\delta$, Lemma \ref{lemma4.11} and \eqref{4.26} imply
\begin{equation} \label{4.29}
	g-m\prec \min\bigg\{\cal E_1^{1/3},\frac{\cal E_1}{\kappa+\eta^{2/3}}\bigg\}=\min_{0\leq a\leq 1}\cal E_1^{a/3}\bigg(\frac{\cal E_1}{\kappa+\eta^{2/3}}\bigg)^{1-a}\,.
\end{equation}
In addition, 
\begin{align*} 
	\cal E_1 \prec  \frac{\Lambda^2+(\im m)^2}{N\eta}+\frac{\Lambda^{1/2}+(\im m)^{1/2}}{(N\eta)^{5/2}}+\frac{\Lambda^{1/2}+(\im m)^{1/2}}{N\eta}\kappa^{3/4}+\frac{\Lambda^3+(\im m)^3+\eta+\eta^{1/3}\kappa}{q^2}+\frac{1}{N\eta^{1/3}}\,,
\end{align*}
where $\im m \prec \kappa^{1/2}+\eta^{1/3}$. Combining \eqref{4.29} and the above, we get
\[
g-m \prec \bigg(\frac{\Lambda^2}{N\eta}\bigg)^{1/3}+\frac{1}{N\eta}+ \Big(\frac{\Lambda}{(N\eta)^5}\Big)^{1/6}+\frac{1}{N\eta}+\frac{\Lambda^{1/3}}{(N\eta)^{2/3}}+\frac{\Lambda}{q^{2/3}}+\frac{\kappa^{1/2}+\eta^{1/3}}{q^2}+\frac{1}{N\eta}
\]
provided that $g-m\prec \Lambda$. Iterating the above gives 
\[
g-m \prec \frac{1}{N\eta}+\frac{\kappa^{1/2}+\eta^{1/3}}{q^2} \prec \frac{1}{N\eta}\,,
\]
which is the desired result.

(ii) When $(w,\ii\eta)\in \b S^{(2)}_\delta$, Lemma \ref{lemma4.11} and \eqref{4.26} imply
\begin{equation} \label{4.27}
	g-m \prec \min \bigg\{ \cal E_1^{1/3}, \frac{\cal E_1}{\kappa}\bigg\}= \min \bigg\{ \cal E_1^{1/3},\frac{\cal E_1^{1/2}}{\kappa^{1/4}}, \frac{\cal E_1}{\kappa}\bigg\}\,.
\end{equation}
In addition, Lemma \ref{lemma4.11} shows $\im m=O(\eta/\kappa)$, and thus
\begin{equation*} 
	\cal E_1 \prec \frac{\Lambda^2+(\eta/\kappa)^2}{N\eta}+\frac{\Lambda^{1/2}+(\eta/\kappa)^{1/2}}{(N\eta)^{5/2}}+\frac{\Lambda^{1/2}+(\eta/\kappa)^{1/2}}{N\eta}\kappa^{3/4}+\frac{\Lambda^3}{q^2}+\frac{\eta^3}{q^2\kappa^3}+\frac{\eta}{q^2}+\frac{\eta^{1/3}\kappa}{q^2}+\frac{1}{N\eta^{1/3}}\,.
\end{equation*}
Combining \eqref{4.27} with the above we get
\[
g-m\prec \frac{1}{N^{1+\nu}\eta}+N^{-\nu} \Lambda
\]
provided that $g-m\prec \Lambda$. By iterating the above we get the desired result.

\section{Local law of $\widetilde{H}_w$, delocalization and spectral radius of $A$.} \label{sec4.4}

This section generalizes the results in Section \ref{sec3} to the non-centered model $\widetilde{H}_w$. Consequently, for the adjacency matrix $A$, we establish the complete delocalization of the eigenvectors and the sharp upper bound of the spectral radius. Comparing to $H_w$, our main task in this section is to treat the large expectation of $\widetilde{H}_w$.

Section \ref{sec4.1} proves the averaged local law for $\widetilde{H}_w$. As $\widetilde{H}_w$ is a rank-two perturbation of $H_w$, the result is a simple consequence of Theorems \ref{theorem 4.1} and \ref{thmHstrong}.

Section \ref{sec4.2} proves Theorem \ref{radius and delocalization} (ii) by establishing Theorem \ref{theoremA}, an entriwise local law for $\widetilde{H}_w$. The main technical step is Proposition \ref{prop5.2} (ii), a weak estimate of the Green functions arising from the large expectations of $\widetilde{H}_w$. The proof idea is similar to that of Lemma \ref{lemma4.4}: the Green function we are interested in preserves its structure under differentiations w.r.t.\,$H$.

Section \ref{secradius} proves the upper bound in Theorem \ref{radius and delocalization} (i), by showing a stronger estimate of the Green function outside the spectrum (Proposition \ref{prop4.5}). The main technical result, Lemma \ref{lemma4.7}, is a sharp estimate of the Green functions arising from the large expectation of $\widetilde{H}_w$. To improve from Proposition \ref{prop5.2} (ii) to Lemma \ref{lemma4.7}, we make use of the fact that $\widetilde{H}_w-H_w$ is of rank two. Together with the resolvent identity, we manage to turn the large expectation of $\widetilde{H}_w$ to our advantage, by forming new self-consistent equations for $\sum_{ij} \widetilde{G}_{ij}$ and $\sum_{i\alpha} \widetilde{G}_{i\alpha}$. Armed with Lemma \ref{lemma4.7}, the rest of the proof can essentially be imported from the centered case, Proposition \ref{prop4.1} and Lemma \ref{lemma4.4}.

\begin{figure}
	\begin{tikzpicture}[>=stealth,every node/.style={shape=rectangle,draw,rounded corners, minimum width=2.9cm,},]
		\node[very thick,] (l2) {
			Theorem~\ref{radius and delocalization}\,(ii)};

		\node[very thick, minimum width=2.7cm] (l5)[right=of l2]{Lemma~\ref{lemmadelocalization} };
		
		\node[very thick] (l8) [below=of l5,minimum width=2.7cm]{Lemma~\ref{lemma 5.1}};

		\node[very thick] (l7) [right=of l5]{Theorem~\ref{theoremA}};

		\node[very thick] (l10) [right=of l7, fill=blue!30]{Proposition~\ref{prop5.2}};

		
		\draw[->,  line width=.6mm] (l5) to[out=180,in=0] (l2);
		
		\draw[->,  line width=.6mm] (l7) to[out=180,in=0] (l5);
		\draw[->,  line width=.6mm] (l8) to[out=180,in=-4] (l2);

		\draw[->,  line width=.6mm] (l10) to[out=180,in=0] (l7);
		
	\end{tikzpicture}
	\caption{The structure of Section \ref{sec4.2}}
	\label{fig:proof4.2}
\end{figure}

\begin{figure}
	\begin{tikzpicture}[>=stealth,every node/.style={shape=rectangle,draw,rounded corners, minimum width=2.9cm,},]
		\node[very thick, ] (l2) { \begin{tabular}{c}Corollary~\ref{cor4.7}\\(Upper bound in \\
			Theorem~\ref{radius and delocalization}\,(i))\end{tabular}}; 

		\node[very thick] (l7) [right=of l2]{Proposition~\ref{prop4.5}}; 

		\node[very thick] (l10) [right=of l7]{Lemma~\ref{lemma4.8}}; 
		
			\node[very thick] (l11) [right=of l10, fill=blue!30]{Lemma~\ref{lemma4.7}}; 
			
				\node[very thick] (l12) [below=of l11]{\begin{tabular}{c} Proposition~\ref{prop4.1}\\
			and	Lemma~\ref{lemma4.4}\end{tabular}}; 


		\draw[->,  line width=.6mm] (l7) to[out=180,in=0] (l2);

		\draw[->,  line width=.6mm] (l10) to[out=180,in=0] (l7);
		
				\draw[->,  line width=.6mm] (l11) to[out=180,in=0] (l10);
				
					\draw[->,  line width=.6mm] (l12) to[out=180,in=-4] (l10);
		
	\end{tikzpicture}
	\caption{The structure of Section \ref{secradius}}
	\label{fig:proof4.3}
\end{figure}

\subsection{The average law} \label{sec4.1}
Recall the definition of $\G$ in \eqref{2.4}. We have the following result.

\begin{corollary} \label{cor4.2}
Fix $\delta\in (0,\xi/100)$. Let $\b D_\delta$ and $\b S_\delta$ be defined as in \eqref{Ddelta} and just below \eqref{3.111} respectively. The following estimates hold.

	(i) We have
	\[
	\ul{\G}-m \prec \frac{1}{(N\eta)^{1/6}}+\frac{1}{q^{1/3}}
	\]
	uniformly for $(w,z)\in \b D_{\delta}$.
	
	(ii) We have 
	\[
	\ul{\G}-m \prec \frac{1}{N\eta}
	\]
	uniformly for $(w,\ii\eta)\in \b S_{\delta}$.
\end{corollary}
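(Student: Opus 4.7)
The plan is to exploit the fact that $\widetilde{H}_w$ is a rank-$2$ perturbation of $H_w$, and to transfer the averaged laws for $G$ (from Theorems \ref{theorem 4.1} and \ref{thmHstrong}) to $\widetilde{G}$ essentially for free. Since $A = B + f\b e\b e^*$ with $\b e = N^{-1/2}(1,\dots,1)^*$ and $f\asymp q$, we have
\[
\widetilde{H}_w - H_w \;=\; f\begin{pmatrix} 0 & \b e\b e^* \\ \b e\b e^* & 0 \end{pmatrix},
\]
which is a Hermitian matrix of rank exactly $2$, independently of the (possibly large) value of $f$.

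The main technical input I would invoke is the deterministic rank-inequality for Stieltjes transforms: if $M,\widetilde{M}$ are Hermitian matrices of size $n\times n$ with $\mathrm{rank}(\widetilde{M}-M)\leq k$, then Cauchy interlacing gives
\[
\left|\tfrac{1}{n}\tr(\widetilde{M}-z)^{-1}-\tfrac{1}{n}\tr(M-z)^{-1}\right|\;\leq\;\frac{Ck}{n\,|\im z|}.
\]
Applied with $n=2N$ and $k=2$, this yields the deterministic bound
\[
|\,\ul{\G}-\ul{G}\,|\;\leq\;\frac{C}{N\eta}
\]
uniformly in $(w,z)$ with $\im z=\eta>0$, regardless of the size of $f$. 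This is where the rank-two (as opposed to full rank) structure of the shift $\bb E A$ is essential.

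With this in hand, part (i) follows from Theorem \ref{theorem 4.1}: for $(w,z)\in\b D_\delta$ we have $N\eta\geq N^\delta$, so $(N\eta)^{-1}\leq (N\eta)^{-1/6}$, and hence
\[
|\,\ul{\G}-m\,| \;\leq\; |\,\ul{\G}-\ul{G}\,|+|\,\ul{G}-m\,|\;\prec\; \frac{1}{N\eta}+\frac{1}{(N\eta)^{1/6}}+\frac{1}{q^{1/3}}\;\prec\;\frac{1}{(N\eta)^{1/6}}+\frac{1}{q^{1/3}}.
\]
Part (ii) follows in the same way from Theorem \ref{thmHstrong}: on $\b S^{(1)}_\delta$ the bound is $|\ul{G}-m|\prec 1/(N\eta)$ directly, while on $\b S^{(2)}_\delta$ the stronger estimate $|\ul{G}-m|\prec 1/(N^{1+\nu}\eta)$ trivially implies the same. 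Combined with $|\ul{\G}-\ul{G}|=O(1/(N\eta))$, this gives $|\ul{\G}-m|\prec 1/(N\eta)$ throughout $\b S_\delta$.

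There is essentially no obstacle here; all the probabilistic work has already been done for the centered model $H_w$ in Sections \ref{sec3} (via the cusp-fluctuation self-consistent equation of Proposition \ref{prop4.1}), and the transfer to $\widetilde{H}_w$ costs only the deterministic $1/(N\eta)$ rank-perturbation error, which is always absorbed by the target bounds. The subtler issue, namely that the rank-$2$ shift can nonetheless spoil the \emph{isotropic} estimates of $\widetilde G$ when $|w|$ is close to $1$, is postponed to the next subsection where the shifted self-consistent equation for entries of $\widetilde G$ is analyzed.
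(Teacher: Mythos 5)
Your proposal is correct and follows essentially the same route as the paper: both reduce the corollary to the deterministic low-rank resolvent perturbation bound $|\ul{\G}-\ul{G}|\leq C/(N\eta)$ (the paper writes $\widetilde H_w-H_w$ as three rank-one perturbations and uses Cauchy interlacing plus integration by parts, while you invoke the equivalent rank-two inequality directly) and then conclude by the triangle inequality from Theorems \ref{theorem 4.1} and \ref{thmHstrong}, noting $(N\eta)^{-1}\leq (N\eta)^{-1/6}$ on $\b D_\delta$.
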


\begin{proof}
	Let $w \in \bb C$, $\b 0\deq (0,0,...,0)^*\in \bb R^{N}$ and $\b e\deq N^{-1/2}(1,1,...,1)^*\in \bb R^N$. Then
	\[
	\widetilde{H}_{w}=H_w+f\begin{pmatrix}
		\b e\\ \b e
	\end{pmatrix} \begin{pmatrix}
	\b e^* \ \b e^*
\end{pmatrix}-f\begin{pmatrix}
\b e\\ \b 0
\end{pmatrix} \begin{pmatrix}
\b e^* \ \b 0^*
\end{pmatrix}-f\begin{pmatrix}
\b 0\\ \b e
\end{pmatrix} \begin{pmatrix}
\b 0^* \ \b e^*
\end{pmatrix}\,.
	\]
In other words, $\widetilde{H}_w$ and $H_w$ differs by three rank-one perturbations. Let $\rho_w$ and $\widetilde{\rho}_w$ denote the empirical eigenvalue densities of $H_w$ and $\widetilde{H}_w$ respectively, then Cauchy interlacing theorem implies
\[
|\rho_w(I)-\widetilde{\rho}_w(I)| \leq \frac{3}{2N}\
\]
for any $I \subset \bb R$. Thus using integration by parts we have
\[
\ul{\G}-\ul{G}=\int \frac{\widetilde{\rho}_w(x)-\rho_w(x)}{x-\ii \eta} \dd x=-\int \frac{\widetilde{\rho}_w((-\infty, x])-\rho_w((-\infty,x])}{(x-\ii \eta)^2} \dd x=O(N^{-1})\int \frac{1}{x^2+\eta^2} \dd x \,,
\]
which implies $|\ul{\G}-\ul{G}|\leq C/(N\eta)$. The result then follows from Theorems \ref{theorem 4.1} and \ref{thmHstrong}.
\end{proof}

\subsection{Entrywise law and delocalization} \label{sec4.2}
In this section prove the following entrywise density law for $\widetilde{H}_w$.
\begin{theorem} \label{theoremA}
	Fix $\delta \in (0,\xi/100)$, and let $\b D_\delta$ be defined as in \eqref{Ddelta}. We have
	\begin{equation*} 
		\max_{\i\j}\big|	\widetilde{G}_{\i\j}-M_{\i\j} \big|\prec \frac{1}{(N\eta)^{1/6}}+\frac{1}{q^{1/3}}  \quad \mbox{and} \quad \max_{\i \j}|\widetilde{G}_{\i\j}|\prec 1
	\end{equation*}
	uniformly for $(w,z)\in \b D_\delta$.
\end{theorem}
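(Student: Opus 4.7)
The strategy is to reduce Theorem \ref{theoremA} to the entrywise law for the centered model (Theorem \ref{theorem 4.1}) by comparing $\widetilde{G}$ with $G$ via the rank-two perturbation. Since
\[
\widetilde{H}_w - H_w \;=\; UCU^*, \qquad U = [\b{f}_1,\b{f}_2], \quad \b{f}_1 := N^{-1/2}(\b{1}_N,\b{0}_N)^*,\; \b{f}_2 := N^{-1/2}(\b{0}_N,\b{1}_N)^*,\; C := f\,\sigma_x,
\]
the Sherman--Morrison--Woodbury identity yields, entrywise,
\[
\widetilde{G}_{\i\j} - G_{\i\j} \;=\; -\sum_{a,b \in \{1,2\}} (G\b{f}_a)_{\i}\; K_{ab}\; (\b{f}_b^* G)_{\j}, \qquad K := (C^{-1} + U^* G U)^{-1}.
\]
Thus it suffices to bound (a) the $2\times 2$ matrix $K$ in operator norm, and (b) the projected Green function $G\b{f}_a$ in sup norm.

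For (a), the entries of $U^* G U$ are the averages $\b{f}_a^* G \b{f}_b = N^{-1}\sum G_{\i\j}$ over index blocks, with deterministic counterpart $U^* M U = \bigl(\begin{smallmatrix} m & w\frak m \\ \bar w \frak m & m\end{smallmatrix}\bigr)$. Using the identity $(z+m)^2 - |w|^2 = -(z+m)/m$ derived from \eqref{mmm}, one finds $\det(U^*MU) = \frak m$, which is uniformly of order one on $\b{D}_\delta$. The diagonal contributions to $\b{f}_a^* G \b{f}_b$ collapse to $\ul G$ by the symmetry in Lemma \ref{lemma4.11}(ii) and sit near $m$ by Corollary \ref{cor4.2}; the off-diagonal averages $N^{-1}\sum_{\i \ne \j} G_{\i\j}$ can be controlled by a brief fluctuation averaging step on top of Theorem \ref{theorem 4.1}. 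Since $\|C^{-1}\|=O(q^{-1})$, we conclude $\|K\| = O(1)$.

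For (b), the deterministic value $(M \b{f}_a)_{\i}$ is only of order $N^{-1/2}$, and the naive Ward bound $|(G\b{f}_a)_{\i}| \prec \eta^{-1/2}$ is much too weak. I would establish the semi-isotropic estimate
\[
(G\b{f}_a)_{\i} \;=\; (M\b{f}_a)_{\i} + O_\prec\!\big(N^{-1/2}[(N\eta)^{-1/6}+q^{-1/3}]\big)
\]
by adapting the cumulant expansion of Proposition \ref{prop4.2} to the scalar linear functional $\b{e}_{\i}^* G \b{f}_a$, where $\b{e}_{\i}$ is the $\i$-th standard basis vector. The expansion produces a coupled system of self-consistent relations for $(G\b{f}_a)_{\i}$ and $(G\b{f}_a)_{\i'}$, and a bootstrap/stability analysis paralleling the proof of Theorem \ref{theorem 4.1} closes the estimate. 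Combining (a) and (b) gives $|\widetilde{G}_{\i\j} - G_{\i\j}| \prec N^{-1}[(N\eta)^{-1/3}+q^{-2/3}]$, and Theorem \ref{theorem 4.1} then yields Theorem \ref{theoremA}.

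The main obstacle is the semi-isotropic bound (b). Projecting onto the delocalized vector $\b{f}_a$ produces a sum of $N$ off-diagonal entries that can only be controlled via cancellations, and the linear-functional version of the cumulant expansion generates extra coupled cross-terms (in particular involving $(G^2 \b{f}_a)_{\i}$) that must be handled using Ward identities and the entrywise bounds from Theorem \ref{theorem 4.1}. Moreover, the mean perturbation $f \b{e}\b{e}^*$ has large operator norm ($\asymp q$), and some care is needed to verify that its contribution to the expansion is absorbed into the $C^{-1}$ block of $K$ rather than producing a spurious leading-order term in the self-consistent equation for $G\b{f}_a$.
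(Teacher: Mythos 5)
Your proposal takes a genuinely different route from the paper: you aim to compare $\widetilde{G}$ and $G$ via a Sherman--Morrison--Woodbury identity and reduce everything to the centered model, whereas the paper never goes back to $G$ in this step. The paper proves Proposition~\ref{prop5.2}(ii) (a cumulant-expansion estimate for $\sum_i((H\widetilde{G})_{i\j}+\ul{\widetilde{G}}\widetilde{G}_{i\j})$ etc.\ directly for $\widetilde G$), and then in the resolvent identity \eqref{5.20} the large mean appears as the term $fN^{-1}\sum_\alpha\widetilde{G}_{\alpha\j}$; isolating this term and dividing by the \emph{large} coefficient $f\asymp q$ yields $\sum_\alpha\widetilde{G}_{\alpha\j}\prec f^{-1}(\cdots)+N^{-\delta}\psi$, so the a priori bound $\psi$ comes back with a factor $N^{-\delta}$ and the estimate self-improves. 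This ``large $f$ helps, not hurts'' mechanism is precisely the paper's third stated novelty, and your reduction to the centered $G$ forfeits it entirely.

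The concrete gap is in your step (b). You claim
\[
(G\b{f}_a)_{\i} \;=\; (M\b{f}_a)_{\i} + O_\prec\!\big(N^{-1/2}[(N\eta)^{-1/6}+q^{-1/3}]\big)\,,
\]
which, unraveling the normalisation, is $\sum_j G_{\i j}-m\prec (N\eta)^{-1/6}+q^{-1/3}$. This is much too strong. For the centered model the correct semi-isotropic fluctuation scale, as dictated by the Ward identity and any version of the isotropic local law, is
\[
\b e_\i^* G\b f_a - \b e_\i^* M\b f_a \;\prec\; \sqrt{\frac{\im m}{N\eta}} + \text{sparsity corrections}\,,
\]
so $\sum_j G_{\i j}-m \prec \sqrt{\im m/\eta}$, which for small $\eta$ (say $\eta\sim N^{-1/2}$ with $\im m\asymp 1$) is $\gg 1$, not $\ll 1$ as you assert. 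You arrive at the wrong form by taking the entrywise error from Theorem~\ref{theorem 4.1} and multiplying by $N^{-1/2}$, but the off-diagonal fluctuations do not cancel that aggressively. Of course, even with the correct $\sqrt{\im m/(N\eta)}$ bound the SMW formula would arithmetically still close ($(\sqrt{\im m/(N\eta)}+q^{-1})^2\leq\im m/(N\eta)+q^{-2}\ll(N\eta)^{-1/6}+q^{-1/3}$), so your strategy is not doomed; but then what you actually need to prove is a full semi-isotropic local law for the \emph{sparse centered} Hermitization on all of $\b{D}_\delta$, which is itself a substantial cumulant/bootstrap argument that your sketch glosses over, and which the paper deliberately circumvents by working with $\widetilde{G}$ and extracting the $f^{-1}$ gain. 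A secondary point to fill in: your claim $\|K\|=O(1)$ requires $|\frak m|=\det(U^*MU)$ to be bounded \emph{below} on $\b{D}_\delta$, not merely bounded above as the paper records; this holds with a $\delta$-dependent constant, but it is not addressed.
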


To show Theorem \ref{theoremA}, we need the following probabilistic estimates.
\begin{proposition} \label{prop5.2}
Recall the definition of $H$ in \eqref{H}. Fix $\delta\in (0,\xi/100)$ and $\nu \in (0,\delta/100)$. Let $(w,z)\in \b D_\delta$. Suppose that $\max_{\i\j}|\widetilde{G}_{\i\j}-M_{\i\j}|\prec \phi$ for some deterministic $\phi\in [N^{-1},N^{\nu}]$ at $(w,z)$.
	
	(i)  At $(w,z)$ we have
	\[
	\max_{\hat\imath \hat\jmath}\big|(H\widetilde{G})_{\hat\imath\hat\jmath}+\ul{\widetilde{G}}\widetilde{G}_{\hat{\imath}\hat\jmath} \big|\prec (1+\phi)^4\cdot \bigg(\sqrt{\frac{1}{N\eta}}+\frac{1}{q}\bigg)\eqd 
	{\cal E}\,.
	\]
	Here $H=H_0$, as defined in \eqref{H}.
	
	(ii) Suppose in addition that
	\[
	\max_{\j}\Big|\sum_{i} \widetilde{G}_{i\j}\Big|+\max_{\j}\Big|\sum_{\alpha} \widetilde{G}_{\alpha\j}\Big| \prec \psi
	\]
	for some deterministic $\psi\in [N^{-1},N^{1+\nu}]$ at $(w,z)$. Then at $(w,z)$ we have
	\begin{equation*}
		\begin{aligned}
\cal Q_*&\deq 		\max_{\j}\Big|\sum_{i} \big((H\widetilde{G})_{i\j}+\ul{\widetilde{G}}\widetilde{G}_{i\j}\big)\Big|+\max_{\j}\Big|\sum_{\alpha}\big( (H\widetilde{G})_{\alpha\j}+\ul{\widetilde{G}}\widetilde{G}_{\alpha\j}\big)\Big|\\
&\prec (\im m+\phi) \eta^{-1} +(1+\phi)^2(|m|+\phi)^2Nq^{-2}+\psi\eqd \widetilde{\cal E}\,.
		\end{aligned}
	\end{equation*}
\end{proposition}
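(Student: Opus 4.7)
The plan is to prove both parts by the moment method, applying the cumulant expansion Lemma~\ref{cumulant} to high moments of the quantity to be controlled. Since $\widetilde H_w-H$ is deterministic, the derivatives $\partial_{i\alpha}$ in the entries $H_{i\alpha}=B_{i,\alpha-N}$ act on $\widetilde G$ by the same formula \eqref{diff} as on $G$; because $A$ is real, $\widetilde H_w$ is Hermitian, so $\widetilde G$ obeys the Ward identity and the $\widetilde G$-versions of Lemma~\ref{lemma4.11}.

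Part~(i) is the entrywise analogue of Proposition~\ref{prop4.2}. Fix $\hat\imath,\hat\jmath$ and an even integer $n$, and compute $\bb E|(H\widetilde G)_{\hat\imath\hat\jmath}+\ul{\widetilde G}\widetilde G_{\hat\imath\hat\jmath}|^{2n}$ by applying Lemma~\ref{cumulant} to $\bb E\sum_\k H_{\hat\imath\k}\widetilde G_{\k\hat\jmath}\cdot X$ with $X$ a polynomial of degree $2n-1$. The second cumulant $\cal C_2(H_{\hat\imath\k})=N^{-1}$, acting via $\partial_{\hat\imath\k}\widetilde G_{\k\hat\jmath}=-\widetilde G_{\k\hat\imath}\widetilde G_{\k\hat\jmath}-\widetilde G_{\k\k}\widetilde G_{\hat\imath\hat\jmath}$, produces the diagonal piece $-\ul{\widetilde G}\widetilde G_{\hat\imath\hat\jmath}\cdot X$ which cancels against the second term on the left, together with an off-diagonal piece controlled by Cauchy--Schwarz and Ward by $((\im m+\phi)/(N\eta))^{1/2}$; higher cumulants, with $\cal C_{k+1}\sim N^{-1}q^{1-k}$, yield the $1/q$ piece of $\cal E$ after standard power counting. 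The argument is completely parallel to Proposition~\ref{prop4.2}.

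For Part~(ii), fix $\hat\jmath$ and set $Y:=\sum_i[(H\widetilde G)_{i\hat\jmath}+\ul{\widetilde G}\widetilde G_{i\hat\jmath}]$; the target is $|Y|\prec\widetilde{\cal E}$. Applying Lemma~\ref{cumulant} to $\bb E\sum_{i,\alpha}H_{i\alpha}\widetilde G_{\alpha\hat\jmath}\cdot X$ with $X=Y^{n-1}\overline Y^n$, the second-cumulant part is
\[
-\frac{1}{N}\sum_{i,\alpha}\bb E\,\widetilde G_{\alpha i}\widetilde G_{\alpha\hat\jmath}\cdot X-\bb E\,\ul{\widetilde G}\sum_i\widetilde G_{i\hat\jmath}\cdot X+\frac{1}{N}\sum_{i,\alpha}\bb E\,\widetilde G_{\alpha\hat\jmath}\,\partial_{i\alpha}X,
\]
where we used $\sum_\alpha\widetilde G_{\alpha\alpha}=N\ul{\widetilde G}$, and the middle term cancels against the $\ul{\widetilde G}\widetilde G_{i\hat\jmath}$ piece inside $Y$. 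For the first term, Cauchy--Schwarz followed by Ward applied twice yields
\[
\sum_\alpha\Big|\sum_i\widetilde G_{\alpha i}\Big|^2\leq N\sum_{i,\alpha}|\widetilde G_{\alpha i}|^2\leq N\sum_i\frac{\im\widetilde G_{ii}}{\eta}\prec\frac{N^2(\im m+\phi)}{\eta}
\]
and $\sum_\alpha|\widetilde G_{\alpha\hat\jmath}|^2\prec(\im m+\phi)/\eta$, so this term is $\prec(\im m+\phi)/\eta$, the first contribution to $\widetilde{\cal E}$. The fourth cumulant $\cal C_4\sim(Nq^2)^{-1}$ applied to $\partial_{i\alpha}^3\widetilde G_{\alpha\hat\jmath}$ produces (among sibling terms) the expression $\frac{1}{Nq^2}\sum_{i,\alpha}\widetilde G_{\alpha\alpha}^2\widetilde G_{ii}\widetilde G_{i\hat\jmath}\cdot X$: two diagonal factors give $(|m|+\phi)^2$, the free double sum gives $N^2$, and the remaining entries contribute $(1+\phi)^2$, producing the $N(|m|+\phi)^2(1+\phi)^2q^{-2}$ term.

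The main obstacle is the derivative term $\frac{1}{N}\sum_{i,\alpha}\widetilde G_{\alpha\hat\jmath}\,\partial_{i\alpha}X$. Because $X$ is a polynomial in $Y$ and $\overline Y$, each $\partial_{i\alpha}Y$ is itself a sum of the form $\sum_{i'}\partial_{i\alpha}[(H\widetilde G)_{i'\hat\jmath}+\ul{\widetilde G}\widetilde G_{i'\hat\jmath}]$, and expanding it via \eqref{diff} produces quantities such as $\sum_{i'}\widetilde G_{i'\alpha}$, $\sum_{i'}\widetilde G_{i'i}$, and $\sum_{i'}(H\widetilde G)_{i'\alpha}$. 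The first two are $\prec\psi$ directly by hypothesis (taking $\hat\jmath=\alpha$ or $\hat\jmath=i$), while the third has exactly the form $\cal Q_*$ and is absorbed by bootstrapping the claim itself at a weaker scale; this self-referential use of the hypothesis is what injects the $\psi$ contribution into $\widetilde{\cal E}$. After H\"older's inequality, the estimates combine into a self-improving recursion $\bb E|Y|^{2n}\prec\sum_{a=1}^{2n}\widetilde{\cal E}^a\,\bb E|Y|^{2n-a}$, from which $|Y|\prec\widetilde{\cal E}$ follows. The estimate for $\sum_\alpha[(H\widetilde G)_{\alpha\hat\jmath}+\ul{\widetilde G}\widetilde G_{\alpha\hat\jmath}]$ is symmetric, using $\sum_i\widetilde G_{ii}=N\ul{\widetilde G}$ in place of its block analogue.
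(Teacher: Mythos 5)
Your overall strategy matches the paper exactly: moment method via the cumulant expansion Lemma~\ref{cumulant}, with the cancellation $\frac{1}{N}\sum_\alpha\widetilde G_{\alpha\alpha}=\ul{\widetilde G}$ killing the $\ul{\widetilde G}\sum_i\widetilde G_{i\j}$ piece, the Ward identity producing the $(\im m+\phi)\eta^{-1}$ term, and the hypothesis on $\sum_i\widetilde G_{i\j}$ plus self-bootstrapping supplying the $\psi$ and $\Psi$ contributions. Part~(i) is handled the same way as the paper (mimic Proposition~\ref{prop4.2}). The $Y_1$ analysis you spell out is correct.

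There is, however, a genuine gap in the cumulant bookkeeping for Part~(ii). You attribute the middle term $(1+\phi)^2(|m|+\phi)^2Nq^{-2}$ of $\widetilde{\cal E}$ to the \emph{fourth} cumulant $\cal C_4\sim(Nq^2)^{-1}$, and you handle that term with a naive entrywise bound. But the binding constraint is the \emph{third} cumulant $\cal C_3\sim(Nq)^{-1}$. The second derivative $\partial_{i\alpha}^2\widetilde G_{\alpha\j}$ produces, among other things, the term $\widetilde G_{\alpha\alpha}\widetilde G_{ii}\widetilde G_{\alpha\j}$, whose contribution is
\[
\frac{1}{Nq}\sum_{i,\alpha}\widetilde G_{\alpha\alpha}\,\widetilde G_{ii}\,\widetilde G_{\alpha\j}\,.
\]
The naive entrywise bound gives $\frac{1}{Nq}\cdot N^2(|m|+\phi)^2(1+\phi)=N(|m|+\phi)^2(1+\phi)q^{-1}$, which exceeds the claimed $N(|m|+\phi)^2(1+\phi)^2q^{-2}$ by a factor $q/(1+\phi)\gg1$. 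So a straight power-count does not close here. The paper handles this by applying the Ward identity once more to the $\j$-ending factor (Cauchy--Schwarz gives $\sum_\alpha|\widetilde G_{\alpha\j}|\prec N^{1/2}((\im m+\phi)/\eta)^{1/2}$), yielding the bound in~\eqref{4.66}, and then closes by an AM--GM split:
\[
\frac{(1+\phi)(\im m+\phi)^{1/2}(|m|+\phi)N^{1/2}}{\eta^{1/2}q}\prec\frac{\im m+\phi}{\eta}+\frac{(1+\phi)^2(|m|+\phi)^2N}{q^2}\,.
\]
This is precisely the step your proposal omits; as written, the third cumulant is not under control, so the recursion $\bb E|Y|^{2n}\prec\sum_a\widetilde{\cal E}^a\bb E|Y|^{2n-a}$ does not follow. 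Once you add the Ward--Cauchy--Schwarz refinement on the third cumulant and the AM--GM split, your argument aligns with the paper's.
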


\begin{proof}
	Part (i) is essentially identical to Proposition \ref{prop4.2}, whose proof is a standard argument using Lemma \ref{cumulant}, and we shall omit it and only give the details of part (ii). Fix an even integer $n \geq 2$ and an index $\j$.  Let us denote $\cal Q\deq\sum_{i} ((H\widetilde{G})_{i\j}+\ul{\widetilde{G}}\widetilde{G}_{i\j})$ and suppose $\cal Q_* \prec \Psi$ for some deterministic $\Psi\in [1,N^{2}]$. We shall prove our statement by showing that
	\begin{equation} \label{5.3}
		\bb E |\cal Q|^{n} \prec \sum_{a=1}^n (\widetilde{\cal E}+\widetilde{\cal E}^{1/2}\Psi^{1/2}+N^{-\delta}
		\Psi)^a\bb E \cal |\cal Q|^{n-a}\,.
	\end{equation}
Indeed, \eqref{5.3} implies $\sum_i((H\widetilde{G})_{i\j}+\ul{\widetilde{G}}\widetilde{G}_{i\j}) \prec \widetilde{\cal E}+\widetilde{\cal E}^{1/2}\Psi^{1/2}+N^{-\delta}
\Psi$. Similarly, we also have $\sum_\alpha((H\widetilde{G})_{\alpha\j}+\ul{\widetilde{G}}\widetilde{G}_{\alpha\j}) \prec \widetilde{\cal E}+\widetilde{\cal E}^{1/2}\Psi^{1/2}+N^{-\delta}
\Psi$. Since the estimates are uniform in $\j$, we get
\begin{equation} \label{5.4}
\cal Q_* \prec \widetilde{\cal E} +\widetilde{\cal E}^{1/2}\Psi^{1/2}+N^{-\delta} \Psi
\end{equation}
provided that $\cal Q_* \prec \Psi$. Iterating \eqref{5.4} we get the desired result. As complex conjugates play no roles in our argument, we shall ignore it on LHS of \eqref{5.3} and prove
\begin{equation} \label{5.5}
	\bb E \cal Q^{n} \prec \sum_{a=1}^n (\widetilde{\cal E}+\widetilde{\cal E}^{1/2}\Psi^{1/2}+N^{-\delta}
	\Psi)^a\bb E \cal |\cal Q|^{n-a}
\end{equation}
instead. By Lemma \ref{cumulant} we get
\begin{equation} \label{4.412}
	\begin{aligned}
		\bb E \cal Q^{n}&=\sum_{i\alpha} \bb E H_{i\alpha }\widetilde{G}_{\alpha j} \cal Q^{n-1}+\sum_i \bb E\ul{\widetilde{G}}\widetilde{G}_{i\j}\cal Q^{n-1}\\
		&=\sum_{r=1}^{\ell}\sum_{i\alpha}\cal C_{r+1}(H_{i\alpha})\bb E\partial_{i\alpha}^r(\widetilde{G}_{\alpha\j}\cal Q^{n-1})+\sum_i \bb E\ul{\widetilde{G}}\widetilde{G}_{i\j}\cal Q^{n-1}+(N^{-10n})\\
		&\eqd \sum_{r=1}^{\ell} Y_r+\sum_i\bb E \ul{\widetilde{G}}\widetilde{G}_{i\j}\cal Q^{n-1}+(N^{-10n})\,.
	\end{aligned}
\end{equation}
By \eqref{diff} and Lemma \ref{lemma4.11} (iii), we have
	\begin{align}
	Y_1=&\,-\sum_i \bb E\ul{\widetilde{G}}\widetilde{G}_{i\j}\cal Q^{n-1}-\frac{1}{N} \sum_{i\alpha}\bb E \widetilde{G}_{\alpha i} \widetilde{G}_{\alpha \j}\cal Q^{n-1}+\frac{n-1}{N}\sum_{i\alpha}\bb E \widetilde{G}^2_{\alpha \j}\cal Q^{n-2}\nonumber\\
	&\,-\frac{n-1}{N}\sum_{i\alpha}\bb E \widetilde{G}_{\alpha\j}\G_{\alpha j} \sum_{k}((H\widetilde{G})_{ki}+\ul{\widetilde{G}}\widetilde{G}_{ki})\cal Q^{n-2}-\frac{n-1}{N}\sum_{i\alpha}\bb E \widetilde{G}_{\alpha\j}\G_{i j} \sum_{k}((H\widetilde{G})_{k\alpha}+\ul{\widetilde{G}}\widetilde{G}_{k\alpha})\cal Q^{n-2}\nonumber\\
	&-\frac{(n-1)}{2N^2}\sum_{i\alpha}\bb E \widetilde{G}_{\alpha j} ((\G^2)_{i\alpha}+(\G^2)_{\alpha i})\sum_{k}\G_{k\j}\cal Q^{n-2}\nonumber\\
	=&\,-\sum_i \bb E\ul{\widetilde{G}}\widetilde{G}_{i\j}\cal Q^{n-1}+O_{\prec}((\im m+\phi)\eta^{-1}) \bb E |\cal Q|^{n-1}+O_{\prec}((\im m+\phi)\eta^{-1}) \bb E |\cal Q|^{n-2}\nonumber\\
	&\,+O_{\prec}((\im m+\phi)\eta^{-1}\Psi) \bb E |\cal Q|^{n-2}+O_\prec((\im m+\phi)N^{-1}\eta^{-2}\psi)\bb E |\cal Q|^{n-2}\nonumber\,,
	\end{align}
and thus 
\begin{equation} \label{5.6}
Y_1	+\sum_i \bb E\ul{\widetilde{G}}\widetilde{G}_{i\j}\cal Q^{n-1}=O_\prec(\widetilde{\cal E}) \bb E |\cal Q|^{n-1}+O_\prec(\widetilde{\cal E}^2+\widetilde{\cal E}\Psi) \bb E |\cal Q|^{n-2}\,.
\end{equation}
Note that for $s\geq 1$
\begin{equation} \label{4.4}
\partial_{i\alpha}^s \cal Q \prec (1+\phi)^{s-1}\Big(\Psi(|\G_{i\j}|+|\G_{\alpha \j}|)+\frac{(\im m+\phi)\psi}{N\eta}\Big)\,,
\end{equation}
and $ \partial_{i\alpha}^s \G_{\alpha \j}\prec (1+\phi)^{s-1} (|\G_{i\j}|+|\G_{\alpha \j}|)(|\G_{\alpha i}|+|\G_{ii}|+|\G_{\alpha\alpha}|)$. Thus Lemma \ref{lemma4.11} (iii) and \eqref{ward} implies
\begin{equation} \label{4.66}
\quad \frac{1}{N}\sum_{i\alpha}\partial_{i\alpha}^s \G_{\alpha \j}\prec (1+\phi)^{s-1} \frac{\im m+\phi}{\eta}+(1+\phi)^{s-1} \frac{(\im m+\phi)^{1/2}(|m|+\phi)N^{1/2}}{\eta^{1/2}}\,. 
\end{equation}
For $r\geq 2$, we split
\begin{align}
	Y_r\prec \frac{1}{Nq^{r-1}}\sum_{r_1=0}^r\sum_{i\alpha}\bb E (\partial_{i\alpha}^{r-r_1} \G_{\alpha j})(\partial^{r_1}_{i\alpha} \cal Q^{n-1})\eqd \sum_{r_1=0}^{r}Y_{r,r_1}\,.
\end{align}
When $r_1\geq 1$, by \eqref{4.4},  $ \partial_{i\alpha}^{r-r_1} \G_{\alpha \j}\prec (1+\phi)^{r-r_1} (|\G_{i\j}|+|\G_{\alpha \j}|)$ and \eqref{ward}, we get
\begin{align*}
	Y_{r,r_1}\prec \frac{(1+\phi)^{r-1}(\im m+\phi)}{\eta q^{r-1}}\sum_{a=1}^{r\wedge (n-1)} (\Psi +\psi)^a\bb E |\cal Q|^{n-1-a}\prec\sum_{a=1}^{n-1} (\im m+\phi)\eta^{-1}N^{-\delta a}(\Psi +\psi)^a\bb E |\cal Q|^{n-1-a}
\end{align*}
For $r_1=0$, we get from \eqref{4.66} that
\begin{align}
Y_{r,0}	& \prec\bigg( \frac{(1+\phi)^{r-1}(\im m+\phi)}{\eta q^{r-1}}+\frac{(1+\phi)^{r-1}(\im m+\phi)^{1/2}(|m|+\phi)N^{1/2}}{\eta^{1/2}q^{r-1}}\bigg)\bb E |\cal Q|^{n-1}\nonumber\\
	&\prec \bigg( \frac{(\im m+\phi)}{\eta }+\frac{(1+\phi)(\im m+\phi)^{1/2}(|m|+\phi)N^{1/2}}{\eta^{1/2}q}\bigg)\bb E |\cal Q|^{n-1}\prec \widetilde{\cal E}\bb E |\cal Q|^{n-1}\nonumber\,.
\end{align}
where in the second step we used $r \geq 2$, and in the third step we used 
$$
(1+\phi)(\im m+\phi)^{1/2}(|m|+\phi)N^{1/2}\eta^{-1/2}q^{-1}\prec (\im m+\phi)\eta^{-1}+(1+\phi)^2(|m|+\phi)^2Nq^{-2}\,.
$$ Thus
\begin{equation} \label{1234}
	Y_{r}\prec  \sum_{a=1}^n (\widetilde{\cal E}+\widetilde{\cal E}^{1/2}\Psi^{1/2}+N^{-\delta}
	\Psi)^a\bb E \cal |\cal Q|^{n-a}
\end{equation}
for all $r \geq 2$. Inserting \eqref{5.6}, \eqref{1234} into \eqref{4.412}, we get \eqref{5.5} as desired. This finishes the proof.
\end{proof}

\begin{proof}[Proof of Theorem \ref{theoremA}]
Fix $\nu\in (0,\delta/100)$. Suppose that $\max_{\i\j}|G_{\i\j}-M_{\i\j}|\prec \phi$ for some deterministic $\phi\in [N^{-1},N^{\nu}]$ at $(w,z)\in \b D_\delta$. By triangle inequality we get $\max_{\i\j}|G_{\i\j}|\prec 1+\phi$. Thus
	\begin{equation} \label{qwq}
		\max_{\j}\Big|\sum_{i} \widetilde{G}_{i\j}\Big|+\max_{\j}\Big|\sum_{\alpha} \widetilde{G}_{\alpha\j}\Big| \prec \psi
		\end{equation}
	for some deterministic $\psi\in [N^{-1},N^{1+\nu}]$. By Proposition \ref{prop5.2} (ii) and the resolvent identity, we get
	\[
\sum_{i} \big((H\widetilde{G})_{i\j}+\ul{\widetilde{G}}\widetilde{G}_{i\j}\big)=\delta_{\j\in \{1,2,...,N\}}+z\sum_{i}\G_{i\j}+\ul{\G}\sum_i\G_{i\j}+w\sum_{\alpha}\G_{\alpha\j}-f\sum_\alpha \G_{\alpha \j} \prec \widetilde{\cal E}\,,
	\]
and thus 
	\[
	\sum_{\alpha}\G_{\alpha \j} \prec f^{-1}(\widetilde{\cal E}+\psi)\prec (\im m+\phi) \eta^{-1} f^{-1}+(1+\phi)^2(|m|+\phi)^2Nq^{-2}f^{-1}+N^{-\delta}\psi\,.
	\]
Here in the last step we used $f\asymp N^{\xi}$. Similarly, we get the same estimate for $\sum_{i}\G_{i\j}$. As the last estimates hold uniformly in $\j$, we get
\begin{equation} \label{qqq}
\max_{\j}\Big|\sum_{i} \widetilde{G}_{i\j}\Big|+\max_{\j}\Big|\sum_{\alpha} \widetilde{G}_{\alpha\j}\Big| \prec (\im m+\phi) \eta^{-1}f^{-1} +(1+\phi)^2(|m|+\phi)^2Nq^{-2}f^{-1}+N^{-\delta}\psi
\end{equation}
provided \eqref{qwq} holds. Iterating \eqref{qqq} we get
\begin{equation} \label{5.9}
	\max_{\j}\Big|\sum_{i} \widetilde{G}_{i\j}\Big|+\max_{\j}\Big|\sum_{\alpha} \widetilde{G}_{\alpha\j}\Big| \prec (\im m+\phi) \eta^{-1}f^{-1} +(1+\phi)^2(|m|+\phi)^2Nq^{-2}f^{-1}
\end{equation}
at $(w,z)$. By Proposition \ref{prop5.2} (i) and resolvent identity, we get
\[
\max_{i\j} |\delta_{i\j}+z\G_{i\j}+\ul{\G}\G_{i\j}+wG_{i'\j}|\prec \cal E + \max_{\j}\Big|\sum_{\alpha} \frac{f}{N}\widetilde{G}_{\alpha\j}\Big| \prec \cal E
\]
at $(w,z)$. Similarly, $\max_{\alpha\j} |\delta_{\alpha\j}+z\G_{\alpha\j}+\ul{\G}\G_{\alpha\j}+\bar{w}G_{\alpha'\j}| \prec\cal E$ at $(w,z)$. The rest of the proof follows exactly as the steps in the proof of Theorem \ref{theorem 4.1}.
\end{proof}

Basing on Theorem \ref{theoremA}, have the following delocalization result.
\begin{lemma} \label{lemmadelocalization}
Let $|w|\leq \delta^{-1}$ for some fixed $\delta>0$. We denote the eigenvalues and corresponding $L^2$-normalized eigenvectors of $\widetilde{H}_w$ by $\pm \sigma_1,...,\pm \sigma_N$ and $\begin{pmatrix}
	{\b v_1} \\  \pm \b w_1
\end{pmatrix},...,\begin{pmatrix}
{\b v_N} \\  \pm \b w_N
\end{pmatrix}$ respectively. Here $\b v_i,\b w_i \in \bb C^{N}$ for all $i$. Then
\[
\max_i\|\b v_i\|_\infty+\max_i \|\b w_i\|_\infty \prec N^{-1/2+\varepsilon}\,.
\]
\end{lemma}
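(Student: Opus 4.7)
The plan is to deduce delocalization from the entrywise local law (Theorem \ref{theoremA}) via the standard spectral decomposition argument. The block anti-diagonal structure of $\widetilde{H}_w$ in \eqref{Her} means it anticommutes with $\mathrm{diag}(I_N, -I_N)$, so its spectrum is symmetric about $0$ and its eigenvectors take exactly the chiral form $\tfrac{1}{\sqrt 2}\begin{pmatrix}\b v_i\\\pm \b w_i\end{pmatrix}$ stated in the lemma. In particular, $\|\b v_i\|^2 + \|\b w_i\|^2 = 1$, so it suffices to bound the $L^\infty$ norm of each normalized eigenvector of $\widetilde H_w$ by $N^{-1/2+\varepsilon}$.

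For any index $\j$ and any $L^2$-normalized eigenvector $\b u$ of $\widetilde H_w$ with real eigenvalue $\sigma$, the spectral representation of the Green function yields
\[
\im \widetilde{G}_{\j\j}(\sigma + \ii\eta) \,=\, \sum_{k}\frac{\eta\,|\b u_k(\j)|^2}{(\sigma_k-\sigma)^2+\eta^2}\,\geq\, \frac{|\b u(\j)|^2}{\eta}\,.
\]
Fix $\varepsilon>0$ small and pick $\delta\in (0,\varepsilon)$. Setting $\eta = N^{-1+\delta}$, the hypothesis $(w,\sigma+\ii\eta)\in \b D_\delta$ together with Theorem \ref{theoremA} gives $|\widetilde G_{\j\j}(\sigma+\ii\eta)|\prec 1$, whence $|\b u(\j)|^2\prec N^{-1+\delta}$. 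Since the components of $\b v_i$ and $\b w_i$ are, up to a factor $\sqrt 2$, the first $N$ and last $N$ components of the normalized eigenvector, taking the maximum over $\j$ and $i$ delivers the desired estimate.

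The only delicate point is to check that $|\sigma|\leq \delta^{-2}$ so that $(w,\sigma+\ii\eta)\in \b D_\delta$. Writing $\widetilde H_w = H_w + P$, where $P = \begin{pmatrix} 0 & f\b e\b e^*\\ f\b e\b e^* & 0\end{pmatrix}$ is a rank-two perturbation with non-zero eigenvalues $\pm f\asymp q$, one uses $\|H\|=O(1)$ with very high probability (moment method, cf.\,\cite[Lemma 4.3]{EKYY1}) and Weyl's inequalities to conclude that at most two eigenvalues of $\widetilde H_w$ exceed any fixed constant; the remaining $2N-2$ are bounded and are handled uniformly by the preceding argument. The two outliers lie near $\pm f$ and are the main obstacle: for them one cannot directly invoke Theorem \ref{theoremA}, because the corresponding spectral parameter falls outside $\b D_\delta$. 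However, a first-order perturbation argument shows that each such eigenvector is $O(q^{-1})$-close to a vector in the two-dimensional subspace $\mathrm{span}\{(\b e,\b e),(\b e,-\b e)\}$ on which $P$ acts non-trivially, and hence is $L^\infty$-bounded by $N^{-1/2}+O(q^{-1})\|H\|\cdot N^{-1/2}$. Concretely, one inverts $(\widetilde H_w - \sigma)$ on the complementary subspace using $\|(H_w-\sigma)^{-1}\|=O(f^{-1})$ for $\sigma\asymp f$, making the perturbative expansion rigorous. Combining the two cases yields the stated delocalization for every eigenvector.
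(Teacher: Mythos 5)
Your treatment of the non-outlier eigenvectors is correct and coincides with the paper's: with very high probability all eigenvalues except the pair $\pm\sigma_1$ are $O(1)$ (moment method for $\|H_w\|$ plus a rank-two perturbation/Weyl or Bauer--Fike argument), and for those the spectral representation $|\b u(\j)|^2\leq \eta \,\im\widetilde G_{\j\j}(\sigma+\ii\eta)$ at $\eta=N^{-1+\delta}$ combined with Theorem \ref{theoremA} gives the bound.

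The gap is in your treatment of the outlier pair. Your quantitative claim is that the outlier eigenvector is $O(q^{-1})$-close to $\mathrm{span}\{(\b e,\b e)^*,(\b e,-\b e)^*\}$ and \emph{hence} has sup-norm at most $N^{-1/2}+O(q^{-1})\|H\|N^{-1/2}$; but the closeness you actually obtain by inverting on the complementary subspace with the operator-norm bound $\|(H_w-\sigma)^{-1}\|=O(f^{-1})$ is an $L^2$ statement. An $L^2$ error of size $O(q^{-1})$ only yields $\|\cdot\|_\infty=O(q^{-1})=O(N^{-\xi})$, and even a first-order truncation of the Neumann series leaves an $L^2$ (hence $L^\infty$) remainder of size $O(f^{-2})=O(N^{-2\xi})$; for $\xi<1/4$, i.e.\ throughout the sparser part of the regime $p\geq N^{-1+\tau}$, both bounds are much larger than $N^{-1/2+\varepsilon}$. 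The extra factor $N^{-1/2}$ you attach to the error term would require \emph{entrywise} control of the resolvent acting on the flat vectors, which does not follow from $\|H\|=O(1)$. This is precisely the missing ingredient that the paper supplies: writing $\widetilde H_w=H_w+f\b x\b y^*+f\b y\b x^*$ with $\b x=(\b e,\b 0)^*$, $\b y=(\b 0,\b e)^*$, the eigenvector equation gives
\[
\begin{pmatrix}\b v_1\\ \b w_1\end{pmatrix}=f\sigma_1^{-1}(\b e,\b w_1)(I-\sigma_1^{-1}H_w)^{-1}\b x+f\sigma_1^{-1}(\b e,\b v_1)(I-\sigma_1^{-1}H_w)^{-1}\b y\,,
\]
and one shows, by an argument as in \cite[Lemma 7.10]{EKYY1} (entrywise bounds of the form $\|H_w^k\b x\|_\infty\prec N^{-1/2}C^k$, coming from repeated row-sum estimates for the centered sparse matrix applied to the flat vector), that $(I-\sigma_1^{-1}H_w)^{-1}\b x=\b x+\b\varepsilon$ with $\|\b\varepsilon\|_\infty\prec N^{-1/2}\sigma_1^{-1}$, and similarly for $\b y$. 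Only with this input does $\|\b v_1-\b e/\sqrt2\|_\infty+\|\b w_1-\b e/\sqrt2\|_\infty\prec N^{-1/2}f^{-1}$, and hence the lemma, follow; without it your perturbative step does not close in the sparse regime.
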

\begin{proof}
	W.L.O.G assume $\sigma_1,...,\sigma_N \geq 0$ and $\sigma_1=\max_i{\sigma_i}$. Using the moment method, it is not hard to show that $\|H_w\|=O(1)$ with very high probability (see e.g.\,\cite[Lemma 4.3]{EKYY1} for the proof of a similar result). Thus an application of Bauer-Fike Theorem shows that with very high probability
	\[
	|\sigma_1-f|=O(1) \quad \mbox{and} \quad \max_{2\leq i\leq N} |\sigma_i|=O(1)\,.
	\]
Hence Theorem \ref{theoremA} and a spectral decomposition implies
\[
\max_{i \geq 2}\|\b v_i\|_\infty+\max_{i \geq 2} \|\b u_i\|_\infty \prec N^{-1/2+\varepsilon}\,.
\]

Let $\b 0\deq (0,0,...,0)^*\in \bb R^{N}$ and $\b e\deq N^{-1/2}(1,1,...,1)^*\in \bb R^{N}$. In addition, set $\b x\deq \begin{pmatrix}
	{\mathbf e} \\ \b 0
\end{pmatrix}$ and $\b y\deq \begin{pmatrix}
	{\mathbf 0} \\ \b e
\end{pmatrix}$. The identity $\widetilde{H}_w=H_w+f \b x\b y^*+f\b y \b x^*$ yields
\[
\sigma_1 \begin{pmatrix}
	{\b v_1} \\  \b w_1
\end{pmatrix}= H_w \begin{pmatrix}
{\b v_1} \\  \b w_1
\end{pmatrix}+f (\b e, \b w_1)\b x+f (\b e, \b v_1)\b y\,.
\]
As $\sigma_1$ is not in the spectrum of $H_w$ with very high probability, we get
\begin{equation} \label{4.98}
\begin{pmatrix}
{\b v_1} \\  \b w_1
\end{pmatrix}=f\sigma_1^{-1}(\b e, \b w_1)(I-\sigma_1^{-1}H_w)^{-1}\b x+f\sigma^{-1}_1(\b e, \b v_1)(I-\sigma_1^{-1}H_w)^{-1} \b y\,.
\end{equation}
Using an argument similar to \cite[Lemma 7.10]{EKYY1}, it can be shown that
\begin{equation} \label{4.999}
	(I-\sigma_1^{-1}H_w)^{-1}\b x=\b x+\b \varepsilon_1 \quad \mbox{and}\quad  (I-\sigma_1^{-1}H_w)^{-1}\b y=\b y+\b \varepsilon_2\,,
\end{equation}
where $\|\b \varepsilon_1\|_\infty, \|\b \varepsilon_2\|_\infty \prec N^{-1/2}\sigma_1^{-1}$. As $\sigma_1\asymp f$ with very high probability, we get from \eqref{4.98} and \eqref{4.999} that
\[
\b v_1=(\b e , \b w_1) \b e+\b \varepsilon_3 \quad \mbox{and} \quad \b w_1=(\b e , \b v_1) \b e+\b \varepsilon_4\,,
\]
where $\|\b \varepsilon_3\|_\infty, \|\b \varepsilon_4\|_\infty \prec N^{-1/2}f^{-1}$. Thus $\|\b v_1-\b e/\sqrt{2}\|_\infty+\|\b w_1-\b e/\sqrt{2}\|_\infty \prec N^{-1/2}f^{-1}$. This finishes the proof.
\end{proof}

To prove delocalization Theorem \ref{radius and delocalization} (ii), we also need the following prior estimate on the extreme eigenvalues of $A$.
\begin{lemma} \label{lemma 5.1} 
	Let $\lambda_1,\lambda_2,...,\lambda_N$ be the eigenvalues of $A$ with $|\lambda_1|=\max_i |\lambda_i|$. Then with very high probability,
	\[
	|\lambda_1-f|=O(1) \quad \mbox{and} \quad \max_{2\leq i\leq N} |\lambda_i|=O(1)\,.
	\]
\end{lemma}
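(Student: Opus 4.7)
The plan is to exploit the rank-one decomposition $A = B + f\mathbf{e}\mathbf{e}^*$ and the secular equation that comes from the matrix determinant lemma: for $\lambda \notin \operatorname{spec}(B)$,
\[
\det(A-\lambda I) = \det(B-\lambda I)\bigl(1 + f\mathbf{e}^*(B-\lambda I)^{-1}\mathbf{e}\bigr),
\]
so eigenvalues of $A$ are either eigenvalues of $B$ or roots of $F(\lambda) := 1 + f\mathbf{e}^*(B-\lambda I)^{-1}\mathbf{e} = 0$.

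First I would establish the prior bound $\|B\| = O(1)$ with very high probability. This is standard via the moment method applied to $\operatorname{Tr}(BB^*)^k$ with $k$ a slowly growing power of $\log N$, exactly as in \cite[Lemma 4.3]{EKYY1}; the argument uses the moment bound \eqref{2.1} and $q \ge N^{\tau/2}$. Once we know $\|B\|\le C_0$ w.v.h.p., any eigenvalue of $A$ that is also an eigenvalue of $B$ is automatically $O(1)$, so it suffices to analyze the roots of $F(\lambda)=0$ in $\mathbb{C}\setminus\operatorname{spec}(B)$.

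Next I would handle the region $|\lambda|>2\|B\|$ by a Neumann series. On this set,
\[
-1 = f\mathbf{e}^*(B-\lambda I)^{-1}\mathbf{e} = -\tfrac{f}{\lambda}\,\mathbf{e}^*(I-B/\lambda)^{-1}\mathbf{e},
\]
so any such root satisfies $\lambda = f\mathbf{e}^*(I-B/\lambda)^{-1}\mathbf{e}$, and since $\|(I-B/\lambda)^{-1}-I\|\le 2\|B\|/|\lambda|$ and $\mathbf{e}^*\mathbf{e}=1$, we get the key estimate
\[
|\lambda - f| \le \frac{2f\|B\|}{|\lambda|}.
\]
A simple case split on whether $|\lambda|\ge f/2$ or not (recall $f\asymp q\to\infty$) shows that every such root lies in $\{|\lambda-f|\le 4\|B\|\}\cup\{|\lambda|\le 4\|B\|\}$. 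Combined with the previous paragraph, every eigenvalue of $A$ lies in one of these two $O(1)$-sized sets.

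Finally, to show that exactly one eigenvalue lies in the disk around $f$ (and all the other $N-1$ eigenvalues therefore satisfy $|\lambda|=O(1)$), I would apply Rouché's theorem to $F$ on the circle $|\lambda-f|=10\|B\|$. Writing $F(\lambda) = (1-f/\lambda) + R(\lambda)$ with $R(\lambda) = -\tfrac{f}{\lambda}\mathbf{e}^*[(I-B/\lambda)^{-1}-I]\mathbf{e}$, one has $|1-f/\lambda| = |\lambda-f|/|\lambda| \asymp \|B\|/f$ on this circle, while $|R(\lambda)| = O(\|B\|^2/f^2) \ll \|B\|/f$. Hence $F$ and $1-f/\lambda$ have the same number of zeros inside, namely one, which proves the claimed count. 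The main (minor) obstacle is purely bookkeeping: making sure each eigenvalue of $A$ is accounted for exactly once by $F$ together with $\operatorname{spec}(B)$, which is immediate from the determinantal identity since it expresses $\det(A-\lambda I)$ as a polynomial whose factorization into $N$ roots is determined by $\operatorname{spec}(B)$ and the $N$-th degree polynomial $\det(B-\lambda I)\cdot F(\lambda)$.
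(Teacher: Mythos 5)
Your argument establishes the same key probabilistic input as the paper (the moment-method bound $\|B\|=O(1)$ with very high probability, via \cite[Lemma 4.3]{EKYY1}), but then diverges in the deterministic step: the paper perturbs off the rank-one matrix $f\b e\b e^*$ and simply invokes the Bauer--Fike theorem, since $f\b e\b e^*$ is real symmetric (so the eigenvector condition number is $1$), yielding that every eigenvalue of $A$ lies within $\|B\|$ of $\{0,f\}$, and the one-eigenvalue count near $f$ follows by continuity of the spectrum along $s\mapsto f\b e\b e^*+sB$. You instead perturb off $B$, use the matrix determinant lemma to reduce to the secular function $F$, locate its zeros via a Neumann series, and count them with Rouch\'e. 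Both routes are valid; yours is more self-contained while the paper's is shorter. One small inaccuracy in your write-up: the asserted bound $|R(\lambda)|=O(\|B\|^2/f^2)$ does not follow from $\|(I-B/\lambda)^{-1}-I\|\le 2\|B\|/|\lambda|$ alone, which only gives $|R(\lambda)|\le 2f\|B\|/|\lambda|^2\asymp\|B\|/f$, the \emph{same} order as $|1-f/\lambda|$. Getting the extra factor of $\|B\|/f$ would require noticing that the leading term $\b e^*B\b e/\lambda$ is small, i.e.\ $|\b e^*B\b e|\prec N^{-1/2}$ (a concentration input you have not stated). Fortunately the Rouch\'e comparison still closes, because on the circle $|\lambda-f|=10\|B\|$ one has $|1-f/\lambda|=10\|B\|/|\lambda|$ while $|R(\lambda)|\le 2\|B\|/|\lambda|\cdot(1+o(1))$, so the inequality $|R|<|1-f/\lambda|$ holds with a safe numerical margin. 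So the conclusion is correct, but the claimed order of magnitude for $R$ should either be weakened to $O(\|B\|/f)$ (which suffices) or backed by the $\b e^*B\b e$ estimate; alternatively, picking a Rouch\'e radius of order $f/\log N$ makes the comparison completely painless.
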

\begin{proof}
Recall the definition of $H$ in \eqref{H}.	Using the moment method, it is not hard to see that $\|H\|=O(1)$ with very high probability. Thus $\|B\|=O(1)$ with very high probability. The result then follows easily from the Bauer-Fike theorem.
\end{proof}	

\begin{proof}[Proof of Theorem \ref{radius and delocalization} (ii)]
Let us use the normalization that $\|\b u\|=1$. Set $\b 0\deq (0,0,...,0)^*\in \bb R^{N}$. It is easy to see that $0$ is an eigenvalue of $\widetilde{H}_{\lambda}$, with eigenvector 
$\begin{pmatrix}
	{\mathbf 0} \\ \b u
\end{pmatrix}\in \mathbb C^{2N}$.
 As Lemma \ref{lemma 5.1} shows that $\max_{2\leq i\leq N} |\lambda_i|=O(1)$ with very high probability, we can easily deduce from Theorem \ref{theoremA} and spectral decomposition that
	\[
	\bigg\|	\begin{pmatrix}
		{\mathbf 0} \\ \b u
	\end{pmatrix}\bigg\|_\infty \prec N^{-1/2}\,.
	\]
	This completes the proof.
\end{proof}

\subsection{The spectral radius of $A$} \label{secradius}
Recall the definition of $\b S^{(2)}_\delta$ in \eqref{3.111}. The main goal of this section is to prove the following improvement of Corollary \ref{cor4.2} outside the unit disc.

\begin{proposition} \label{prop4.5}
	Fix $\delta\in (0,\xi/100)$ and $\nu\in (0,\delta/100)$. We have
	\[
	\ul{\G}-m\prec \frac{1}{N^{1+\nu}\eta}
	\]
	uniformly for all $(w,\ii\eta)\in \b S^{(2)}_\delta$
\end{proposition}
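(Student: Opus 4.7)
The plan is to transfer the strong local law of Theorem~\ref{thmHstrong}~(ii) from the centered matrix $H_w$ to $\widetilde{H}_w$, exploiting the rank-two structure of the difference. Since $\widetilde{H}_w = H_w + fY$ with
\[
Y = \begin{pmatrix} 0 & \b e\b e^*\\ \b e\b e^* & 0\end{pmatrix} = \b u_+\b u_+^*-\b u_-\b u_-^*, \qquad \b u_\pm\deq\frac{1}{\sqrt{2}}\begin{pmatrix}\b e\\ \pm \b e\end{pmatrix}\in\bb R^{2N},
\]
the Sherman-Morrison-Woodbury identity gives
\[
\G = G - GU\bigl(D^{-1}+U^*GU\bigr)^{-1}U^*G,\qquad U\deq [\b u_+,\b u_-],\ D\deq\diag(f,-f),
\]
where all resolvents are at $z=\ii\eta$. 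Taking the normalised trace,
\[
\ul{\G}-\ul{G} = -\frac{1}{2N}\tr\!\Bigl(\bigl(D^{-1}+U^*GU\bigr)^{-1}U^*G^2U\Bigr).
\]
Since Theorem~\ref{thmHstrong}~(ii) already yields $\ul{G}-m\prec 1/(N^{1+\nu}\eta)$, it suffices to bound the right-hand side by $1/(N^{1+\nu}\eta)$ uniformly in $(w,\ii\eta)\in\b S^{(2)}_\delta$.

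The two $2\times 2$ matrices $S\deq U^*GU$ and $T\deq U^*G^2U$ are quadratic forms against the flat vectors $\b u_\pm$, i.e.\ precisely the isotropic-type objects controlled by Lemma~\ref{lemma4.7} (the required bound for $G$ rather than $\G$ follows by rerunning the cumulant expansion of Proposition~\ref{prop5.2}~(ii) on $H_w$ in place of $\widetilde{H}_w$; no mean-perturbation terms arise because $H_w$ is centered). This gives $S = S_{\mathrm{det}} + O_\prec(1/(N\eta))$ with $S_{\mathrm{det}}$ an explicit matrix built from $m$ and $\frak m$. For $(w,\ii\eta)\in\b S^{(2)}_\delta$ one has $\kappa = |w|-1 \geq N^{-1/2+\delta}$, which ensures that $D^{-1}+S_{\mathrm{det}}$ is uniformly non-degenerate and hence $\|(D^{-1}+S)^{-1}\|=O(1)$ with overwhelming probability. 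For $T$, I would use the identity $T = \ii\partial_\eta S$ together with a Cauchy-type contour argument in the $\eta$ variable and uniformity of the isotropic local law, to deduce $\|T\|\prec 1/\kappa + 1/(N\eta^2)$.

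Inserting these estimates into the Woodbury trace formula together with the elementary inequalities $1/(N\kappa)\leq N^{-\nu}/(N\eta)$ and $1/(N^2\eta^2)\leq N^{-\nu}/(N\eta)$, both valid throughout $\b S^{(2)}_\delta$ since $\nu<\delta/100$, then yields the desired bound $|\ul{\G}-\ul{G}|\prec 1/(N^{1+\nu}\eta)$. The main obstacle is the sharp isotropic control of $T$: the naive operator-norm bound $\|G^2\| = O(\eta^{-2})$ is far too weak, and one must exploit the cusp-type smallness $\im m\asymp \eta/\kappa$ off the spectrum, which only manifests itself after a cumulant-expansion/fluctuation-averaging argument adapted to quadratic forms against the constant vector, in the spirit of Section~\ref{sec3.4}. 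Once these isotropic estimates are in place, the remainder of the proof is routine algebra with the Woodbury formula.
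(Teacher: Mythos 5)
Your route is genuinely different from the paper's: the paper never passes through a finite-rank resolvent expansion, but instead proves Lemma \ref{lemma4.8}, i.e.\ the analogues of the generalized Ward identity and of the self-consistent estimate $P(\tilde g)\prec\cal E_1$ directly for $\tilde g=\ul{\G}$, using Lemma \ref{lemma4.7} to absorb the extra mean terms such as $\frac{f}{N}\sum_\alpha\G_{\alpha\j}$ in the resolvent identities, and then reruns the cubic stability analysis of Section \ref{sec4.3} (case (ii)). Your Woodbury reduction $\ul{\G}-\ul{G}=-\frac{1}{2N}\tr[(D^{-1}+U^*GU)^{-1}U^*G^2U]$ together with Theorem \ref{thmHstrong} (ii) is algebraically correct (up to the harmless sign $U^*G^2U=-\ii\,\partial_\eta U^*GU$), and your bookkeeping does close: with $\|(D^{-1}+U^*GU)^{-1}\|=O(1)$ and $\|U^*G^2U\|\prec \kappa^{-1}+(N\eta^2)^{-1}$ one indeed gets $|\ul\G-\ul G|\prec N^{-1-\nu}\eta^{-1}$ on $\b S^{(2)}_\delta$.

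The genuine gap is the input you dismiss in one clause: a flat-vector isotropic law for the \emph{centered} Green function $G$ at scales $\eta\in[N^{-1+\delta},N^{-3/4+\delta}]$. It does not follow by ``rerunning the cumulant expansion of Proposition \ref{prop5.2} (ii) on $H_w$'': the smallness in Proposition \ref{prop5.2} (ii) and Lemma \ref{lemma4.7} is extracted precisely by dividing the self-consistent identity by the large mean $f\asymp q$ (this is the ``large expectation'' mechanism the paper advertises), and that mechanism is absent for $H_w$. For the centered matrix the quadratic forms $\b u_\pm^*G\b u_\pm$ are of order one (close to $m\pm\re(w)\frak m$), so one must instead solve a $2\times2$ system with determinant $(\ii\eta+\ul G)^2-|w|^2$, check its stability, and prove a new fluctuation-averaging bound for flat quadratic forms at cusp scales — work of comparable substance to Lemma \ref{lemma4.7}, not a formal rerun. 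This estimate is needed twice: to show $D^{-1}+U^*GU$ is non-degenerate (here $o(1)$ accuracy suffices, since $|w|^2\frak m^2$ is bounded below on $\b S^{(2)}_\delta$), and, more stringently, to bound $U^*G^2U$, where the required precision is $\im(\b u^*G\b u)\prec \im m+(N\eta)^{-1}$; entrywise bounds or the delocalization results of the paper are far too weak for either. Secondarily, your Cauchy-contour argument for $U^*G^2U$ uses the Green function at complex $\eta$, i.e.\ at spectral parameters off the imaginary axis where none of the paper's edge estimates are proved; this particular step is easily repaired by the Ward-type bound $|\b u^*G^2\b u|\le \b u^*G^*G\b u=\eta^{-1}\im(\b u^*G\b u)$, but it again lands you on the same missing isotropic input. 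So the scheme is plausible, but as written it rests on an unproven estimate whose proof is the real content of this step.
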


From Lemma \ref{lemma 5.1}, we know that $A$ has a nontrivial eigenvalue $\lambda_1\in \bb C$ that satisfies $|\lambda_1-f|=O(1)$ with very high probability. Moreover, Proposition \ref{prop4.5} implies that for any fixed $\delta>0$, with very high probability, $A$ has no eigenvalues in the ring $\{w: 1+N^{-1/2+\delta}\leq|w|\leq \delta^{-1} \}$. Thus we deduce the following upper bound in Theorem \ref{radius and delocalization} (i).
\begin{corollary} \label{cor4.7}
	Let $\lambda_1,\lambda_2,...,\lambda_N$ be the eigenvalues of $A$ with $|\lambda_1|=\max_i |\lambda_i|$. We have
		\[
		\max_{ 2\leq i \leq N} |\lambda_i|\leq1+O_{\prec}(N^{-1/2})\,.
		\]
\end{corollary}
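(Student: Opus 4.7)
The plan is to combine Proposition \ref{prop4.5} with the following elementary observation: if $\lambda \in \bb C$ is an eigenvalue of $A$, then $A-\lambda$ is singular, hence $\widetilde H_\lambda$ has $0$ in its spectrum, and the spectral decomposition of $\widetilde H_\lambda$ yields
\[
\im \ul{\widetilde G}_\lambda(\ii \eta) \;=\; \frac{1}{2N}\sum_j \frac{\eta}{\mu_j^2+\eta^2} \;\geq\; \frac{1}{2N\eta}
\]
for every $\eta>0$, by isolating the contribution of the zero eigenvalue. The strategy is to derive a contradiction between this lower bound and the local law whenever $\lambda$ lies in the annulus $\{w : 1+N^{-1/2+\delta}\leq |w|\leq \delta^{-1}\}$.

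First I would use Lemma \ref{lemma 5.1}, which gives $\max_{i\geq 2}|\lambda_i|=O(1)$ with very high probability, so it suffices to show: for every fixed $\delta>0$, with very high probability $A$ has no eigenvalue in the annulus $\{1+N^{-1/2+\delta}\leq |w|\leq \delta^{-1}\}$. Fix $\nu \in (0,\delta/100)$ and set $\eta=N^{-1+\delta/2}$, so that $(w,\ii\eta)\in \b S^{(2)}_\delta$ for every $w$ in the annulus. Since $\kappa \geq N^{-1/2+\delta}$ dominates $\eta^{2/3}=N^{-2/3+\delta/3}$, Lemma \ref{lemma4.11} (iv) gives
\[
\im m \;\asymp\; \frac{\eta}{\kappa+\eta^{2/3}} \;\leq\; \frac{N^{-1+\delta/2}}{N^{-1/2+\delta}} \;=\; N^{-1/2-\delta/2}\,,
\]
while Proposition \ref{prop4.5} gives $|\ul{\widetilde G}-m| \prec (N^{1+\nu}\eta)^{-1} = N^{-\nu-\delta/2}$. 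Combining,
\[
\im \ul{\widetilde G}_w(\ii\eta) \;\prec\; N^{-1/2-\delta/2}+N^{-\nu-\delta/2} \;\ll\; \frac{N^{-\delta/2}}{2} \;=\; \frac{1}{2N\eta}\,,
\]
which is incompatible with the spectral lower bound above if $w=\lambda$ is a nontrivial eigenvalue of $A$ lying in the annulus.

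To turn this pointwise contradiction into a uniform statement, I would run a standard net argument. By the moment method (as in \cite[Lemma 4.3]{EKYY1}), $\|H\|=O(1)$ and hence $\|\widetilde H_w\|\leq \|H\|+|w|=O(1)$ with very high probability uniformly on the annulus, so $w\mapsto \ul{\widetilde G}_w(\ii\eta)$ is Lipschitz with a polynomial-in-$N$ constant bounded by $C\eta^{-2}$. Choose an $N^{-10}$-net $\{w_k\}$ of polynomial cardinality covering the annulus; Proposition \ref{prop4.5} applied at each $w_k$, together with a union bound, yields the local law simultaneously on the whole net with very high probability. The Lipschitz bound then propagates the estimate from the net to every $w$ in the annulus up to a negligible $N^{-5}$ error, and the same perturbative stability applies to the deterministic expression for $\im m$ in Lemma \ref{lemma4.11} (iv). Evaluating the resulting uniform estimate at $w=\lambda_i$ for any $i\geq 2$ produces the promised contradiction, establishing the corollary.

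The argument is essentially bookkeeping once Proposition \ref{prop4.5} is available; no step is genuinely hard. The only mildly delicate point is the net/continuity reduction, which hinges on the crude a priori bound $\|\widetilde H_w\|=O(1)$ (provided by the moment method and Lemma \ref{lemma 5.1}) to ensure the polynomial-in-$N$ Lipschitz constant of $\ul{\widetilde G}_w(\ii\eta)$ is comfortably compatible with an $N^{-10}$ grid spacing.
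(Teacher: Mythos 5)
Your argument is essentially the paper's argument: the zero singular value of $A-\lambda$ forces $\im\ul{\widetilde G}_\lambda(\ii\eta)\geq (2N\eta)^{-1}$, Proposition \ref{prop4.5} together with Lemma \ref{lemma4.11} (iv) gives the contradicting upper bound for $w$ in the ring, a continuity/net step upgrades this to a simultaneous statement, and Lemma \ref{lemma 5.1} handles $|\lambda_i|>\delta^{-1}$. This is exactly the route the paper takes here (and spells out more explicitly in the parallel Corollary for $B$ after Theorem \ref{thmHstrong}).

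One small bookkeeping slip: with the choice $\eta=N^{-1+\delta/2}$ you have $\eta<N^{-1+\delta}$, so $(w,\ii\eta)\notin\b S^{(2)}_\delta$ as that domain is defined in \eqref{3.111}; the pair is not admissible for Proposition \ref{prop4.5} at parameter $\delta$. This is harmless and easy to repair, e.g.\ take $\eta=N^{-1+\delta}$ (the bottom of the allowed range), or equivalently invoke Proposition \ref{prop4.5} at parameter $\delta/2$ and restrict attention to $\kappa\geq N^{-1/2+\delta}$; in either version the comparison $\im m+O_\prec((N^{1+\nu}\eta)^{-1})\ll (2N\eta)^{-1}$ still goes through since $\delta<\xi/100\leq 1/200$.
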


 Similar to \eqref{wardd} and \eqref{4q2}, we can apply Theorem \ref{theoremA} and Lemma \ref{lemmadelocalization} to improve \eqref{ward} to
\begin{equation} \label{warddd}
	\sum_{\i}|\G_{\i\j}|^2=\frac{\im \G_{\j\j}}{\eta}\prec \frac{\im \ul{\G}}{\eta} \prec \frac{|\ul{\G}- m|+\im m}{\eta}\,,
\end{equation}
and we also have
\begin{equation} \label{4q22}
	\G_{\i\i} \prec |\ul{\G}|\leq |\ul{\G}-m|+\im m\,.
\end{equation} 

To prove Proposition \ref{prop4.5}, recall that we have already obtained its analogue for $\ul{G}$ in Theorem \ref{thmHstrong}. The main difference between $G$ and $\G$ is in their resolvent identities. More precisely, the Green function $G$ satisfies
\[
\delta_{i\j}+\ii \eta G_{i\j}-(HG)_{i\j}+wG_{i'\j}=0\,,
\]
while for the Green function $\G$, we have
\begin{equation} \label{5.20}
	\delta_{i\j}+\ii \eta \G_{i\j}-(H\G)_{i\j}+w\G_{i'\j}=\frac{f}{N}\sum_\alpha \G_{\alpha \j}\,.
\end{equation}
The next result estimates the RHS of \eqref{5.20}, which is the key in showing Proposition \ref{prop4.5}.

\begin{lemma} \label{lemma4.7}
	Fix $\delta\in (0,\xi/100)$, and let $(w,\ii\eta)\in \b S_\delta$. Denote $g\deq \ul{G}$. Suppose that $|g-m|\prec \Lambda$ for some deterministic $\Lambda \in [N^{-1},N^{-1}\eta^{-1}]$ at $(w,\ii\eta)$. Then at $(w,
	\eta)$ we have
\begin{equation} \label{5.11}
	\max_{\j}\Big|\sum_{i} \widetilde{G}_{i\j}\Big|+\max_{\j}\Big|\sum_{\alpha} \widetilde{G}_{\alpha\j}\Big| \prec \frac{\im m+\Lambda}{f\eta}
\end{equation}
as well as
\begin{equation} \label{5.12}
\begin{aligned}
\cal K&\deq	\Big|\sum_{ij} \widetilde{G}_{ij}\Big|+\Big|\sum_{i\alpha} \widetilde{G}_{i\alpha}\Big|+\Big|\sum_{\alpha i} \widetilde{G}_{\alpha i}\Big| +\Big|\sum_{\alpha \beta} \G_{\alpha\beta}\Big| \prec (\im m+\Lambda)\eta^{-2}f^{-2}+Nf^{-1}\,.
\end{aligned}
\end{equation}
\end{lemma}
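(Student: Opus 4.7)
The strategy is to derive closed self-consistent systems for the sums in question and to exploit that, because $\bb E A_{ij}$ is large, the determinants of these systems are of size $\asymp f^2$, producing the $f^{-1}$ (resp.\ $f^{-2}$) gain claimed.

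For \eqref{5.11}, write $\widetilde H_w - H = f(\b x\b y^* + \b y\b x^*) - w P_{12} - \bar w P_{21}$ with $\b x \deq (\b e^*, \b 0^*)^*$, $\b y \deq (\b 0^*, \b e^*)^*$, and $P_{12}, P_{21}$ the block-identity shifts. The resolvent identity $\widetilde H_w \widetilde G = I + \ii \eta \widetilde G$ gives, for $i \in \{1,\dots,N\}$,
\[
(H\widetilde G)_{i\j} = \delta_{i\j} + \ii\eta \widetilde G_{i\j} - \frac{f}{N}\sum_\beta \widetilde G_{\beta\j} + w \widetilde G_{i',\j},
\]
with the analogous identity for $\alpha$-rows. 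Summing over $i$ and over $\alpha$, and applying Proposition \ref{prop5.2}(ii), yields the $2\times 2$ linear system
\begin{align*}
(\ii\eta + \ul{\widetilde G}) S_1 + (w - f) S_2 &= -\mathbbm 1_{\j \leq N} + O_\prec(\widetilde{\cal E}),\\
(\bar w - f) S_1 + (\ii\eta + \ul{\widetilde G}) S_2 &= -\mathbbm 1_{\j > N} + O_\prec(\widetilde{\cal E}),
\end{align*}
where $S_1 \deq \sum_i \widetilde G_{i\j}$, $S_2 \deq \sum_\alpha \widetilde G_{\alpha \j}$. Its determinant is $(\ii\eta + \ul{\widetilde G})^2 - |w-f|^2 \asymp f^2$ since $f\to \infty$ while $\ul{\widetilde G}, w$ are bounded, so $|S_1|+|S_2| \prec (1 + \widetilde{\cal E})/f$. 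Cauchy interlacing combined with the hypothesis $|g-m| \prec \Lambda$ yields $|\ul{\widetilde G} - m| \prec \Lambda$, while Theorem \ref{theoremA} controls the $\phi$-parameter in $\widetilde{\cal E}$. One then iterates: each round gains a factor $f^{-1} \leq N^{-\xi/2}$ in the $\psi$-contribution of Proposition \ref{prop5.2}(ii), so after finitely many rounds the dominant surviving contribution is $(\im m + \phi)/(f\eta) \prec (\im m + \Lambda)/(f\eta)$, which is \eqref{5.11}.

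For \eqref{5.12}, observe that $T_1,T_2,T_3,T_4$ (the four double sums) are precisely $N$ times the entries of the $2\times 2$ matrix $V^*\widetilde G V$ with $V \deq (\b x, \b y) \in \bb C^{2N\times 2}$. Since $\widetilde H_w = H_w + fVJV^*$ for $J \deq \bigl(\begin{smallmatrix} 0 & 1 \\ 1 & 0 \end{smallmatrix}\bigr)$, the Sherman--Morrison--Woodbury formula gives
\[
V^*\widetilde G V = (J/f)\big(J/f + V^* G V\big)^{-1}(V^* G V).
\]
The deterministic approximation $V^*M V = \bigl(\begin{smallmatrix} m & w\frak m \\ \bar w\frak m & m \end{smallmatrix}\bigr)$, together with the relations $\frak m = -m/(z+m)$ and $-1/m = z+m-|w|^2/(z+m)$, inserted in the Woodbury formula and expanded in $1/f$, yields $V^*\widetilde G V = J/f + (z+m)I_2/f^2 + (\text{lower order})$. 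The off-diagonal $J/f$ produces $T_2, T_3 = N/f + O_\prec(\cdot)$, whence the $Nf^{-1}$ contribution; the diagonal part gives $T_1, T_4 = O_\prec(N(\eta + \im m)/f^2)$, comfortably within the stated bound. To quantify the fluctuations, I would derive the isotropic estimate $\|V^*GV - V^*MV\| \prec \sqrt{(\im m + \Lambda)/(N\eta)}$ by an additional cumulant expansion applied to the quadratic forms $\b x^*G\b x$, $\b x^*G\b y$, $\b y^*G\b x$, $\b y^*G\b y$, in the spirit of Lemma \ref{lemma4.4} but with the flat vector $\b e$ in place of a coordinate vector. Propagating this fluctuation through the Woodbury inversion and using the Ward identity $\sum_\i|G_{\i\j}|^2 = \im G_{\j\j}/\eta$ delivers the $(\im m+\Lambda)\eta^{-2}f^{-2}$ correction.

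The principal obstacle is obtaining the isotropic bound on $V^*GV$ from the merely averaged assumption $|g-m|\prec \Lambda$. The argument has to mirror the proof of Lemma \ref{lemma4.4}, but each step is run with the flat vector $\b e$ in place of a basis vector; fortunately, the complete delocalization already secured in Corollary \ref{lem3.3} ensures that these isotropic quadratic forms behave as well as the averaged trace modulo a Ward-type fluctuation of the required size. A secondary difficulty arises near the cusp, where $|m|$ and $\det V^*MV$ can both be small: here one must retain the $J/f$ term in the Woodbury denominator, whose off-diagonal structure stabilizes the inversion uniformly throughout $\b S_\delta$ and prevents the naive expansion from blowing up.
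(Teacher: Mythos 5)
Your linear-system formulation of \eqref{5.11} is essentially a recasting of the paper's proof, but it has a gap at the crucial step. A direct invocation of Proposition \ref{prop5.2}(ii) with the entrywise parameter $\phi$ supplied by Theorem \ref{theoremA} gives, after the $\psi$-iteration, the bound $(\im m + \phi)/(f\eta)$. The relation you wrote, $(\im m + \phi)/(f\eta) \prec (\im m + \Lambda)/(f\eta)$, is in fact backwards: for $(w,\ii\eta)\in \b S_\delta$ one has $\phi \asymp (N\eta)^{-1/6} + q^{-1/3}$ while $\Lambda \leq (N\eta)^{-1}$, so $\phi \gg \Lambda$. The paper gets past this precisely by re-running the proof of Proposition \ref{prop5.2}(ii) with the enhanced Ward identity \eqref{warddd} and the diagonal bound \eqref{4q22}, both consequences of complete delocalization, which replace the entrywise $\phi$ by the averaged $\Lambda$ in every occurrence of $\widetilde{\cal E}$. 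Without that substitution your iteration converges to a strictly weaker bound than \eqref{5.11}.

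For \eqref{5.12} your Woodbury route is genuinely different from the paper's and structurally appealing: it reduces the four double sums to entries of a $2\times 2$ matrix $V^*\G V$ and exposes exactly where the $f^{-1}$ and $f^{-2}$ gains come from. However, it rests on the isotropic input $\|V^*GV - V^*MV\| \prec \sqrt{(\im m+\Lambda)/(N\eta)}$ for the \emph{centered} Green function $G$, which is not available: the paper only establishes the averaged strong law (Theorem \ref{thmHstrong}) and a weak entrywise law (Theorem \ref{theorem 4.1}). Proving the optimal isotropic fluctuation of $\b e^* G \b e$ and its block variants would require a new self-consistent expansion — not a rerun of Lemma \ref{lemma4.4}, which is a fourth-moment bound of a different character — and in the sparse regime the isotropic error typically carries additional $q$-dependent terms that you have not tracked. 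Appealing to delocalization (Corollary \ref{lem3.3}) does not help either: for the centered $H_w$, $\b e$ is a generic vector and the sup bound on eigenvector entries only recovers the trivial estimate $|\b e^*\b u_i|\leq 1$. The paper avoids all this by applying one more round of the cumulant machinery directly to the bilinear sum $\cal S_*$, which contains the factor $(H\G)_{ij}$ and therefore stays closed within the averaged framework, followed by the resolvent identity that manufactures the $(w-f)$ coefficient and an iteration in $\varphi$. Incidentally, the worry you raise about stabilizing the Woodbury denominator with the $J/f$ term is not a real issue: since $\im m \gtrsim \eta$ uniformly on $\b S_\delta$ one has $|\frak m|\gtrsim_\delta 1$, so $\det(J/f + V^*MV) = \frak m + O(f^{-1})$ is already bounded away from zero without invoking the $J/f$ correction.
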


\begin{proof}	
	(i) We first prove \eqref{5.11}. By Theorem \ref{theoremA}, the LHS of \eqref{5.11} is stochastically dominated by $N$. Now suppose 
	\[
	\max_{\j}\Big|\sum_{i} \widetilde{G}_{i\j}\Big|+\max_{\j}\Big|\sum_{\alpha} \widetilde{G}_{\alpha\j}\Big| \prec \psi
	\]
	for some deterministic $\psi\in [N^{-1},N]$ at $(w,\ii\eta)$. We can repeat the proof of Proposition \ref{prop5.2} (ii), using \eqref{warddd} instead of \eqref{ward}, and together with the help of \eqref{4q22}, to show that
		\begin{equation} \label{Q}
			\begin{aligned}
			\cal Q_*&\deq \max_{\j}\Big|\sum_{i} \big((H\widetilde{G})_{i\j}+\ul{\widetilde{G}}\widetilde{G}_{i\j}\big)\Big|+\max_{\j}\Big|\sum_{\alpha}\big( (H\widetilde{G})_{\alpha\j}+\ul{\widetilde{G}}\widetilde{G}_{\alpha\j}\big)\Big|\\
			&\prec (\im m+\Lambda) \eta^{-1} +(\im m+\Lambda)^2Nq^{-2}+\psi \prec (\im m+\Lambda) \eta^{-1}+\psi\,.
			\end{aligned}
		\end{equation}
Here in the last step we used Lemma \ref{lemma4.11} (v). The rest of the proof is very close to the derivation of \eqref{5.9}; we omit the details. Note that \eqref{5.11} and \eqref{Q} also implies
\begin{equation} \label{QQQ}
	\cal Q_* \prec (\im m+\Lambda)\eta^{-1}\,.
\end{equation} 

(ii) Now we prove \eqref{5.12}. Suppose at $(w,\ii\eta)$ we have $\cal K \prec \varphi$ for some deterministic $\varphi \in [1,N^2]$. We shall show \eqref{5.12} by proving that
\begin{equation} \label{5.13}
	\begin{aligned}
		\cal S_*\deq &\,\Big|\sum_{ij} \big((H\widetilde{G})_{ij}+\ul{\G}\G_{ij})\Big|+\Big|\sum_{i\alpha} \big((H\widetilde{G}\big)_{i\alpha}+\ul{\G}\G_{i \alpha }\big)\Big|+\Big|\sum_{\alpha i} \big((H\widetilde{G})_{\alpha i}+\ul{\G}\G_{\alpha i}\big)\Big|\\
		&+\Big|\sum_{\alpha \beta} \big((H\widetilde{G})_{\alpha \beta}+\ul{\G}\G_{\alpha \beta}\big)\Big|\prec (\im m+\Lambda)\eta^{-2}f^{-1}+N +\varphi\eqd  \widehat{\cal E}\,. 
	\end{aligned}
\end{equation}
Indeed, as the resolvent identity gives 
\[
\sum_{ij}\big((H\widetilde{G})_{ij}+\ul{\G}\G_{ij}
\big)=N+\ii \eta \sum_{ij} \G_{ij} +\sum_{ij}\ul{\G}\G_{ij}+w\sum_{\alpha j}\G_{\alpha j} -f\sum_{\alpha j}\G_{\alpha j} \prec \widehat{\cal E} \,,
\]
which implies 
\[
\Big|\sum_{\alpha j}\G_{\alpha j}\Big| \prec (\im m+\Lambda)\eta^{-2}f^{-2}+Nf^{-1} +f^{-1}\varphi\,.
\]
The same estimate applies to the other three terms in $\cal K$. Thus 
\begin{equation} \label{5.144}
 \cal K \prec (\im m+\Lambda)\eta^{-2}f^{-2}+Nf^{-1} +f^{-1}\varphi
\end{equation}
provided that $\cal K \prec \varphi$. Iterating \eqref{5.14} we get the desired result.

Now suppose $\cal S_* \prec \widehat{\Psi}$ for some deterministic $\widehat{\Psi}\in [1,N^3]$. Let $\cal S\deq \sum_{ij}((HG)_{ij}+\ul{\G}\G_{ij})$, and fix an even integer $n \geq 2$. We shall prove \eqref{5.13} by showing that
\begin{equation} \label{5.14}
	\bb E |\cal S|^{n} \prec \sum_{a=1}^n (\widehat{\cal E}+\widehat{\cal E}^{1/2}\widehat{\Psi}^{1/2}+N^{-\delta}\widehat{\Psi} )^n \bb E |\cal S|^{n-a}\,.
\end{equation}
Indeed, \eqref{5.14} implies that $|\sum_{ij}((HG)_{ij}+\ul{\G}\G_{ij})|\prec \widehat{\cal E}+\widehat{\cal E}^{1/2}\widehat{\Psi}^{1/2}+N^{-\delta}\widehat{\Psi}$.
Then by estimating other three terms in $\cal S_*$ in a similar fashion, we get
\begin{equation} \label{5.15}
\cal S_* \prec \widehat{\cal E}+\widehat{\cal E}^{1/2}\widehat{\Psi}^{1/2}+N^{-\delta}\widehat{\Psi}
\end{equation}
provided that $\cal S_* \prec \widehat{\Psi}$. Iterating \eqref{5.15} finitely many time we get \eqref{5.13} as desired. Moreover, as complex conjugates play no role in the subsequent analysis, we shall ignore it on LHS of \eqref{5.14} and prove 
\begin{equation} \label{5.16}
	\bb E \cal S^{n} \prec \sum_{a=1}^n (\widehat{\cal E}+\widehat{\cal E}^{1/2}\widehat{\Psi}^{1/2}+N^{-\delta}\widehat{\Psi} )^n \bb E |\cal S|^{n-a}
\end{equation}
instead. By Lemma \ref{cumulant} we get
\begin{equation}
	\begin{aligned}
		\bb E \cal S^n&=\sum_{r=1}^{\ell}\sum_{ij\alpha}\cal C_{r+1}(H_{i\alpha})\bb E\partial_{i\alpha}^r(\widetilde{G}_{\alpha j}\cal S^{n-1})+\sum_{ij} \bb E\ul{\widetilde{G}}\widetilde{G}_{ij}\cal S^{n-1}+O_\prec(N^{-10n})\\
		&\eqd \sum_{r=1}^{\ell} Z_r+\sum_{ij}\bb E \ul{\widetilde{G}}\widetilde{G}_{ij}\cal S^{n-1}+O_\prec(N^{-10n})\,.
	\end{aligned}
\end{equation}
By \eqref{diff}, we have
\begin{align}
	Z_1=&\,-\sum_{ij} \bb E\ul{\widetilde{G}}\widetilde{G}_{ij}\cal S^{n-1}-\frac{1}{N} \sum_{ij\alpha}\bb E \widetilde{G}_{\alpha i} \widetilde{G}_{\alpha j}\cal S^{n-1}+(n-1)\sum_{jl\alpha}\bb E \widetilde{G}_{\alpha j}\G_{\alpha l}\cal S^{n-2}\nonumber\\
	&\,-\frac{n-1}{N}\sum_{jl\alpha}\bb E \widetilde{G}_{\alpha j}\G_{\alpha l} \sum_{ik}((H\widetilde{G})_{ki}+\ul{\widetilde{G}}\widetilde{G}_{ki})\cal S^{n-2}-\frac{n-1}{N}\sum_{ijl\alpha}\bb E \widetilde{G}_{\alpha j}\G_{i l} \sum_{k}((H\widetilde{G})_{k\alpha}+\ul{\widetilde{G}}\widetilde{G}_{k\alpha})\cal S^{n-2}\nonumber\\
	&-\frac{2(n-1)}{N^2}\sum_{ij\alpha}\bb E \widetilde{G}_{\alpha j} ((\G^2)_{i\alpha}+(\G^2_{\alpha i}))\sum_{kl}\G_{kl}\cal S^{n-2}\nonumber\\
	=&\,-\sum_{ij}\bb E\ul{\widetilde{G}}\widetilde{G}_{ij}\cal S^{n-1}+O_{\prec}((\im m+\Lambda)^2f^{-2}\eta^{-2}) \bb E |\cal S|^{n-1}+O_{\prec}(N(\im m+\Lambda)^2f^{-2}\eta^{-2}) \bb E |\cal S|^{n-2}\nonumber\\
	&\,+O_{\prec}((\im m+\Lambda)^{2}f^{-2}\eta^{-2}\widehat{\Psi}) \bb E |\cal S|^{n-2}+O_\prec((\im m+\Lambda)f^{-1}\eta^{-1}\varphi(\im m+\Lambda)\eta^{-1})\bb E |\cal S|^{n-2}\nonumber\\
	&\,+O_{\prec}((\im m+\Lambda)^2f^{-1}\eta^{-2}\varphi)\bb E |\cal S|^{n-2}\nonumber\,,
\end{align}
where in the second step we used \eqref{5.11}, \eqref{QQQ}, and $\cal K\prec\varphi$. Thus
\begin{equation} \label{5.66}
	Z_1	+\sum_{ij}\bb E\ul{\widetilde{G}}\widetilde{G}_{ij}\cal S^{n-1}=O_\prec(\widehat{\cal E}) \bb E |\cal S|^{n-1}+O_\prec(\widehat{\cal E}^2+\widehat{\cal E}\widehat{\Psi}) \bb E |\cal S|^{n-2}\,.
\end{equation}
For $r\geq 2$, we have
\begin{equation*}
	\begin{aligned}
		Z_r&\prec \frac{1}{Nq^{r-1}}\sum_{r_1=0}^r\sum_{ij\alpha}\bb E (\partial_{i\alpha}^{r-r_1} \G_{\alpha j})(\partial^{r_1}_{i\alpha} \cal S^{n-1}) \eqd \sum_{r_1=0}^rY_{r,r_1}\,.
	\end{aligned}
\end{equation*}
Note that for $s\geq 1$, by \eqref{5.11} and \eqref{QQQ}, 
\begin{equation} \label{4.28}
	\begin{aligned}
\partial_{i\alpha}^s \cal S &\prec (\im m+\Lambda)^2f^{-1}\eta^{-2}+(\im m+\Lambda)f^{-1}\eta^{-1}+\frac{(\im m+\Lambda)}{N\eta}(\varphi+(\im m+\Lambda)^2f^{-2}\eta^{-2}) \\
&\prec (\im m+\Lambda)\widehat{\cal E}
	\end{aligned}
\end{equation}
and
\begin{equation} \label{4.299}
\sum_{\alpha j} \partial^s_{i\alpha}G_{\alpha j} \prec \bigg( \frac{\im m+\Lambda}{f\eta}\bigg)\bigg(\sqrt{\frac{N(\im +\Lambda)}{\eta}}+N(\im m+\Lambda)\bigg) \prec N^{\delta}\widehat{\cal E}\,.
\end{equation}
From \eqref{4.299} we know
\[
Z_{r,0} \prec \frac{1}{q^{r-1}}\widehat{\cal E}\bb E |\cal S|^{n-1} \prec \widehat{\cal E} \bb E|\cal S|^{n-1}\,.
\]
By \eqref{5.11} and \eqref{4.28} we have
\[
Z_{r,r_1} \prec \frac{N}{q^{r-1}} \frac{\im m+\Lambda}{f\eta}(\im m+\Lambda) \sum_{a=1}^{(n-1)\wedge r}\widehat{\cal E}^{a} \bb E |\cal S|^{n-1-a} \prec \sum_{a=1}^{n-1}\widehat{\cal E}^{a}\bb E |\cal S|^{n-1-a}\,.
\]
for all $r_1\geq 1$. Combining the above two result we have $Z_r\prec \sum_{a=0}^{n}\widehat{\cal E}^{a}\bb E |\cal S|^{n-a}$ for all $r\geq 2$. Together with \eqref{5.66}, we get \eqref{5.12} as desired. This finishes the proof.
\end{proof}

Proposition \ref{prop4.5} now follows immediately from the following analogue of Proposition \ref{prop4.1} and Lemma \ref{lemma4.4}, together with the stability analysis presented in Section \ref{sec4.3}.

\begin{lemma} \label{lemma4.8}
	Fix $\delta\in (0,\xi/100)$, and let $(w,\ii\eta)\in \b S_\delta$. Denote $\tilde{g}\deq \ul{\G}$. Suppose that $|\tilde{g}-m|\prec \Lambda$ for some deterministic $\Lambda \in [N^{-1},N^{-1}\eta^{-1}]$ at $(w,\ii\eta)$. Then at $(w,
	\eta)$ we have
	\begin{equation} \label{4.244}
	\max_{\j}	\frac{1}{N}\sum_{\i} |\G_{\i \j}|^4 \prec  \Big(\frac{\Lambda+\im m}{N\eta}\Big)^2+\frac{1}{N}
	\end{equation}
	and
	\begin{equation} \label{4.245}
\hspace{-0.08cm}	P(\tilde{g}) \prec \frac{(\Lambda+\im { m})^2}{N\eta}+\frac{(\Lambda+\im m)^{1/2}}{N^{5/2}\eta^{5/2}}+\frac{(\Lambda+\im m)^{1/2}\kappa^{3/4}}{N\eta}+\frac{\Lambda^3+(\im {m})^3+\eta+\eta^{1/3}\kappa}{q^2}+\frac{1}{N}\,.
    \end{equation}
\end{lemma}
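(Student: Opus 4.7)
Both \eqref{4.244} and \eqref{4.245} are direct adaptations of Lemma \ref{lemma4.4} and Proposition \ref{prop4.1} to the non-centered Hermitization $\widetilde{H}_w$, and the plan is to imitate those proofs while carefully handling the corrections produced by the rank-two mean perturbation $\widetilde{H}_w - H_w$. The starting point is the pair of resolvent identities
\begin{equation*}
	w \widetilde{G}_{i'\j} = -\delta_{i\j} - \ii \eta \widetilde{G}_{i\j} + (H\widetilde{G})_{i\j} + \frac{f}{N} \sum_\alpha \widetilde{G}_{\alpha \j}\,, \qquad \bar{w} \widetilde{G}_{i\j} = -\delta_{i'\j} - \ii \eta \widetilde{G}_{i'\j} + (H\widetilde{G})_{i'\j} + \frac{f}{N}\sum_k \widetilde{G}_{k\j}\,,
\end{equation*}
which follow from $\widetilde{H}_w\widetilde{G} = I + \ii \eta \widetilde{G}$ and differ from their centered counterparts only by the two deterministic-in-form correction terms on the right. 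By Lemma \ref{lemma4.7}~(i), each of these corrections satisfies $(f/N)|\sum_k \widetilde{G}_{k\j}| + (f/N)|\sum_\alpha \widetilde{G}_{\alpha \j}| \prec (\im m + \Lambda)/(N\eta)$, which is the key smallness that propagates through the whole argument.

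For \eqref{4.244}, I would mimic the proof of Lemma \ref{lemma4.4} with $G$ replaced by $\widetilde{G}$ throughout, starting from the second identity above. The cumulant expansion is unchanged because the cumulants of $(\widetilde{H}_w)_{i'j}$ of order at least two agree with those of $B_{ij}$. The only new ingredient is the deterministic piece $(f/N)\sum_k \widetilde{G}_{k\j}$, of size $\prec (\im m + \Lambda)/(N\eta) \leq \cal E_2^{1/2}$; combined with the H\"{o}lder bound $(1/N)\sum_i |\widetilde{G}_{i\j}|^3 \leq \widetilde{\cal G}^{3/4}$ (with $\widetilde{\cal G} \deq N^{-1}\sum_i|\widetilde{G}_{i\j}|^4$), it contributes at most $O_\prec(\cal E_2^{1/2} \widetilde{\cal G}^{3/4})\bb E \widetilde{\cal G}^{n-1}$ to the moment recursion, and is absorbed by the iterative scheme exactly as in Section \ref{sec3.3}.

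For \eqref{4.245}, I would first establish the decomposition
\begin{equation*}
	P(\tilde g) = (\ii \eta + \tilde g)\widetilde{P}_1 + \widetilde{P}_2 + \widetilde{P}_3\,,
\end{equation*}
where $\widetilde{P}_1 \deq N^{-1}\sum_i (H\widetilde{G})_{ii} + \tilde g^2$, $\widetilde{P}_2 \deq -w\bigl(N^{-1}\sum_i (H\widetilde{G})_{i'i} + \tilde g \, N^{-1}\sum_i \widetilde{G}_{i'i}\bigr)$, and
\begin{equation*}
	\widetilde{P}_3 \deq (\ii \eta + \tilde g)\frac{f}{N^2}\sum_{i\alpha}\widetilde{G}_{\alpha i} - w\frac{f}{N^2}\sum_{ij}\widetilde{G}_{ji}
\end{equation*}
isolates the mean-correction terms. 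Using Lemma \ref{lemma4.7}~(ii), $f \asymp q$, and $|\ii \eta + \tilde g| \prec \im m + \Lambda$, one checks directly that $\widetilde{P}_3$ is already dominated by the right-hand side of \eqref{4.245}; in fact the $1/N$ term on that right-hand side is precisely what $\widetilde{P}_3$ produces. The remaining work is to estimate $\bb E(\ii \eta + \tilde g)\widetilde{P}_1 \, P(\tilde g)^{n-1}$ and $\bb E \widetilde{P}_2 P(\tilde g)^{n-1}$, which I would carry out by reproducing the cumulant-expansion arguments of Section \ref{sec3.4} essentially verbatim, using \eqref{4.244} in place of Lemma \ref{lemma4.4} and \eqref{4q22} in place of \eqref{4q2}. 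Whenever the Section \ref{sec3.4} proof invokes an auxiliary entrywise resolvent identity (most prominently in the treatments of $X_{1,1}$, $X_{1,2}$, $X_{2,0,3}$, and $X_{2,1}$), the analogous identity for $\widetilde{G}$ carries an extra $(f/N)\sum_k \widetilde{G}_{k\j}$-type correction, which by Lemma \ref{lemma4.7}~(i) is $\prec (\im m + \Lambda)/(N\eta)$, dominated by the leading $(\im m + \Lambda)^2/(N\eta)$ term already present in the target.

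The main obstacle will be the bookkeeping: although the algebraic structure of the proof carries over directly, one has to verify that every one of the many scattered $(f/N)$-corrections is individually subleading relative to the target bound. The decisive feature is the factor $f^{-1} = N^{-\xi + o(1)}$ built into \eqref{5.11} and \eqref{5.12} of Lemma \ref{lemma4.7}; it is this additional smallness, stemming from the rank-one nature of the mean perturbation, that makes both $\widetilde{P}_3$ and the scattered resolvent-identity corrections fall below threshold.
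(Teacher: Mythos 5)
Your proposal is correct and takes the same route the paper sketches: identify the additional $(f/N)$-sums in the shifted resolvent identities for $\widetilde G$, control them via Lemma \ref{lemma4.7}, and then rerun the cumulant-expansion arguments of Lemma \ref{lemma4.4} and Proposition \ref{prop4.1} with these corrections absorbed. The paper's own proof (via \eqref{5.20}--\eqref{5.24}) just records the modified resolvent identities and declares the rest ``essentially identical,'' whereas you usefully spell out the decomposition $P(\tilde g)=(\ii\eta+\tilde g)\widetilde P_1+\widetilde P_2+\widetilde P_3$ and check $\widetilde P_3$ directly against Lemma \ref{lemma4.7} \eqref{5.12}; the only minor imprecision is the claim that the $1/N$ term arises ``precisely'' from $\widetilde P_3$, since running $\widetilde P_1,\widetilde P_2$ through the Proposition \ref{prop4.1} argument would a priori produce the slightly larger $1/(N\eta^{1/3})$, but either term suffices for the downstream stability analysis in Proposition \ref{prop4.5}.
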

\begin{proof}
	Observe from \eqref{5.20} that the main difference between the proofs of \eqref{4.244}, \eqref{4.245} and those of Lemma \ref{lemma4.4}, Proposition \ref{prop4.1} is the use of resolvent identity. Thanks to Lemma \ref{lemma4.7}, we have a sufficient estimate of the RHS of \eqref{5.20}, which leads to 
\begin{equation} \label{5.21}
	\delta_{i\j}+\ii \eta \G_{i\j}-(H\G)_{i\j}+w\G_{i'\j} \prec \frac{\im m+\Lambda}{N\eta}\,.
\end{equation}
Similarly, we also have
\begin{equation} \label{5.22}
	\delta_{\alpha\j}+\ii \eta \G_{\alpha\j}-(H\G)_{\alpha\j}+\bar{w}\G_{\alpha'\j} \prec \frac{\im m+\Lambda}{N\eta}\,,
\end{equation}
and
\begin{equation} \label{5.23}
	1+\ii \eta \ul{\G}-\ul{H\G}+\frac{w}{N}\sum_i \G_{i'i}=\frac{f}{N^2}\sum_{i\alpha}\G_{\alpha i}\prec \frac{\im m+\Lambda}{N^2\eta^2}+\frac{1}{N}\,,
\end{equation}
as well as
\begin{equation} \label{5.24}
	\frac{\ii \eta}{N}\sum_{i'}\G_{i'i}-\frac{1}{N}\sum_{i}(H\G)_{i'i}-\frac{1}{N}\sum_{i}(H\G)_{i'i}+\bar{w}\ul{G}\prec \frac{\im m+\Lambda}{N^2\eta^2}+\frac{1}{N}\,.
\end{equation}
Using \eqref{5.21} -- \eqref{5.24}, the proofs \eqref{4.244} and \eqref{4.245} are essentially identical to those of Lemma \ref{lemma4.4} and Proposition \ref{prop4.1} respectively; we omit the details.
\end{proof}

\section{Edge universality of $A$.} \label{sec5} 

In this section we prove Theorem \ref{main theorem}. By Girko's Hermitization \eqref{1.5}, our computation consists of the analysis of $\widetilde{G}$ in the \textit{subcritical regime} $\eta\leq N^{-3/4-\delta}$, the \textit{critical regime} $\eta\in [N^{-3/4-\delta},N^{-3/4+\delta}]$ and the \textit{supercritical regime} $\eta \geq N^{-3/4+\delta}$. 

In the subcritical regime, thanks to the optimal averaged law Corollary \ref{cor4.2} (ii), the steps are essentially identical to those in the dense case \cite{CEK3}. 

In the critical regime, we introduce the matrix flow \eqref{matrix flow} as in \cite{CEK3}. Our starting point is Lemma \ref{lemma 5.2}, which states that the results proved in Section \ref{sec4.4} for $\widetilde{G}\equiv \widetilde{G}(0)$ remain valid for all $\widetilde{G}(t)$, $t\geq 0$. Then we are able to bound $\ul{\G}(0)-\ul{\G}(\infty)$ in Lemma \ref{lemma5.3}. Note that $\G(\infty)$ is associate with the Ginibre ensemble with a rank-one perturbation, and we still need to compare it with the centered model in Lemma \ref{lemma5.4}. Combining Lemmas \ref{lemma 5.2} -- \ref{lemma5.4} settles the critical regime.

The supercritical regime turns out to be the most difficult, as the optimal estimate $\ul{\G}-m\prec (N\eta)^{-1}$ fails for large $\eta$ (see Corollary \ref{cor4.2}). The key idea here is to use integration by parts formula for the shift variable $w$ inside the Girko's Hermitization \eqref{5.28}, which avoids the study of $\widetilde{G}$ for $\eta>N^{-3/4+\delta}$. In addition, we are able to prove the $N$th sub-trace of $\widetilde{G}$ satisfies a strong rigidity estimate (Proposition \ref{prop5.1}), which is required by the RHS of \eqref{5.28}. As a result, we are able to show that the supercritical regime is negligible \eqref{6.6}.

Combining the above results yields the proof of Theorem \ref{main theorem}.

\begin{figure}
	\begin{tikzpicture}[>=stealth,every node/.style={shape=rectangle,draw,rounded corners, minimum width=2.9cm,},]
		\node[very thick,] (l2) { 
				Theorem~\ref{main theorem}}; 

		\node[very thick, minimum width=2.7cm] (l5)[right=of l2]{Proposition~\ref{prop5.5} }; 

		\node[very thick] (l7) [right=of l5]{Lemma~\ref{lemma5.3}}; 
		
			\node[very thick] (l8) [above=of l7]{Lemma~\ref{lemma5.4}}; 
			
						\node[very thick] (l9) [below=of l7]{Lemma~\ref{lemma5.6}}; 

		\node[very thick] (l10) [right=of l7]{Lemma~\ref{lemma 5.2}}; 

			\node[very thick] (l11) [right=of l9, fill=, fill=blue!30]{Proposition~\ref{prop5.1}}; 

				\node[very thick] (l12) [below=of l11]{Corollary~\ref{cor4.2}}; 

		
		\draw[->,  line width=.6mm] (l5) to[out=180,in=0] (l2);
		
		\draw[->,  line width=.6mm] (l7) to[out=180,in=0] (l5);
		
			\draw[->,  line width=.6mm] (l8) to[out=180,in=3mm] (l5);

				\draw[->,  line width=.6mm] (l9) to[out=180,in=-3mm] (l5);

		\draw[->,  line width=.6mm] (l10) to[out=180,in=0] (l7);
		
		\draw[->,  line width=.6mm] (l11) to[out=180,in=0] (l9);
		
				\draw[->,  line width=.6mm] (l12) to[out=180,in=-3mm] (l9);

	\end{tikzpicture}
	\caption{Proof of Theorem \ref{main theorem}}
	\label{fig:proofthm1.1}
\end{figure}

The last result remains to be proved is the lower bound in Theorem \ref{radius and delocalization} (i). This is a direct consequence of Theorem \ref{thmcircularlaw} -- a mesoscopic density law near the edge. To prove Theorem \ref{thmcircularlaw}, one only needs to lift \eqref{6.4} and Lemma \ref{lemma5.6} to the mesoscopic scale.

\begin{proposition} \label{prop5.1}
	Fix $\delta\in (0,\xi/100)$. We have
	\[
	\frac{1}{N}\sum_{i} \G_{i'i}+\frac{1+m^2}{w} \prec \frac{1}{N^2\eta^2}+\frac{1}{N\eta^{2/3}}
	\]
	uniformly for $(w,\ii\eta) \in \b S_{\delta}$. 
\end{proposition}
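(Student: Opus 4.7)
The starting point is the resolvent identity \eqref{5.20} for $\widetilde{G}$: setting $\j = i$ and summing over $i$, and using Lemma~\ref{lemma4.11}(iii) (which gives $\sum_i \G_{ii} = N\ul{\G}$), one obtains
\[
1 + \ii\eta \ul{\G} - \frac{1}{N}\sum_i (H\G)_{ii} + \frac{w}{N}\sum_i \G_{i'i} = \frac{f}{N^2}\sum_{i,\alpha}\G_{\alpha i}.
\]
By Lemma~\ref{lemma4.7} (specifically \eqref{5.12}), the right-hand side is $\prec \frac{\im m + \Lambda}{N^2\eta^2} + \frac{1}{N}$, which is within the target. Corollary~\ref{cor4.2}(ii) supplies $\ul{\G} = m + O_\prec(\Lambda)$ with $\Lambda = 1/(N\eta)$, so after rearranging the display above becomes
\[
\frac{w}{N}\sum_i \G_{i'i} + 1 + \ii\eta m + m^2 = Q + O_\prec\!\Big(\tfrac{1}{N^2\eta^2} + \tfrac{1}{N\eta^{2/3}}\Big),\qquad Q \deq \frac{1}{N}\sum_i \big[(H\G)_{ii} + \ul{\G}\, \G_{ii}\big],
\]
where the error term absorbs $(\eta + |m+\ul{\G}|)\Lambda$ via the edge estimates of Lemma~\ref{lemma4.11}(iv). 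Dividing by $w$ and using $1 + \ii\eta m + m^2 = (1+m^2) + \ii\eta m$, the proposition reduces to the two statements $Q \prec \frac{1}{N^2\eta^2} + \frac{1}{N\eta^{2/3}}$ and $|\eta m|/|w| \prec \frac{1}{N^2\eta^2} + \frac{1}{N\eta^{2/3}}$; the latter is a short computation using $|m| \asymp \kappa^{1/2} + \eta^{1/3}$ for $|w| \le 1$ and $|m| \asymp \eta/(\kappa + \eta^{2/3})$ for $|w| > 1$, combined with the size constraints $\eta \le N^{-3/4+\delta}$ and $\kappa \le N^{-1/2+\delta}$ in $\b{S}^{(1)}_\delta$ (no such $\kappa$ restriction is needed in $\b{S}^{(2)}_\delta$).

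To obtain the sharp averaged estimate on $Q$, we apply Lemma~\ref{cumulant} to $\bb{E}[B_{ik}\G_{k+N,i}(\cdot)]$ in the spirit of Proposition~\ref{prop4.1} and Lemma~\ref{lemma4.8}. The crucial cancellation occurs at the second-cumulant ($r=1$) level: using \eqref{diff},
\[
\sum_{i,k}\partial_{B_{ik}}\G_{k+N,i} = -\sum_{i,k}\G_{k+N,i}^2 - \Big(\sum_\alpha \G_{\alpha\alpha}\Big)\Big(\sum_i \G_{ii}\Big) = -\sum_{i,k}\G_{k+N,i}^2 - N^2 \ul{\G}^2,
\]
so the $-\ul{\G}^2$ piece cancels exactly against the $+\ul{\G}^2$ inherent in $Q$, leaving the leading term $-N^{-2}\sum_{i,k}\G_{k+N,i}^2$. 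The Ward identity \eqref{warddd} then gives $|N^{-2}\sum_{i,k}\G_{k+N,i}^2| \le \im \ul{\G}/(N\eta) \prec (\im m + \Lambda)/(N\eta)$, and the edge estimates from Lemma~\ref{lemma4.11}(iv) convert this into $\frac{1}{N^2\eta^2} + \frac{1}{N\eta^{2/3}} + \frac{\kappa^{1/2}}{N\eta}$, the last term being $\prec$ the target throughout $\b{S}_\delta$ by direct inspection. Higher-cumulant terms ($r \ge 2$) are handled by Lemma~\ref{Tlemh} together with the inputs already established for $\widetilde{G}$: the Ward identity \eqref{warddd}, the four-sum bound \eqref{4.244}, the isotropic bound \eqref{5.11} of Lemma~\ref{lemma4.7}, and the diagonal cusp bound $|\G_{\hat\imath\hat\imath}| \prec |m| + \Lambda$ from \eqref{4q22}. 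The smallness of $|m|$ near the unit circle is what suppresses these terms enough to offset the $q^{-(r-1)}$ prefactors; the extension to general moments $\bb{E}|Q|^n$ is then by the standard bootstrap used in Proposition~\ref{prop4.1}.

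The principal obstacle is precisely the averaged bound on $Q$. Proposition~\ref{prop5.2}(i) provides only the entry-wise estimate $|(H\G)_{\hat\imath\hat\jmath} + \ul{\G}\G_{\hat\imath\hat\jmath}| \prec \sqrt{1/(N\eta)} + 1/q$, which at $\eta \sim N^{-3/4}$ is of size $N^{-1/8}$ and is preserved under averaging over $\hat\imath = i$; this is far coarser than the $N^{-1/2}$ target. Overcoming this requires the direct cumulant-expansion of the diagonal-averaged quantity $Q$, where the Ward summation over $k$ of $|\G_{k+N,i}|^2$ supplies the full factor $1/(N\eta)$ of improvement and the cusp smallness of $m$ disposes of the higher-cumulant tail. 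This is the same mechanism by which Proposition~\ref{prop4.1} sharpens Proposition~\ref{prop4.2} in the analysis of $P(\ul{G})$, and it is the only new non-routine input needed for Proposition~\ref{prop5.1}.
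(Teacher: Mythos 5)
Your proposal is correct and follows essentially the same route as the paper's proof of Proposition~\ref{prop5.1}: both reduce, via the resolvent identity \eqref{5.20}, Lemma~\ref{lemma4.7}, and Corollary~\ref{cor4.2}, to a cumulant-expansion moment bound on the self-consistent average ($\ul{H\G}+\ul{\G}^2$ in the paper, your equivalent $Q$), where the $\cal C_2$-level cancellation of $\ul{\G}^2$ and the Ward bound $N^{-2}\sum_{i\alpha}\G_{\alpha i}^2 \prec (\im m+\Lambda)/(N\eta)$ produce the target. The detail you add — spelling out the cancellation mechanism and noting the short computation needed to absorb the residual $\ii\eta m$ term — correctly unpacks what the paper compresses into the recursive estimate \eqref{6.2} (``an argument similar to the proof of Proposition~\ref{prop4.1}'') and the chain of identities preceding \eqref{6.1}.
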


\begin{proof}
	From Corollary \ref{cor4.2}, we know that $|\ul{\G}-m|\prec N^{-1}\eta^{-1}\eqd \Lambda$. Fix an even integer $n$. Using Lemma \ref{cumulant} and a argument similar to the proof of Proposition \ref{prop4.1}, it is not hard to get the recursive estimate 
	\begin{equation} \label{6.2}
	\bb E |\ul{H\G}+\ul{\G}^2|^n \prec \sum_{a=1}^n \bigg(\frac{\im m+\Lambda}{N\eta}+\frac{(\im m+\Lambda)^{1/2}}{(N\eta)^{3/2}}+\frac{(\im m+\Lambda)^2}{q^2}+\frac{1}{N\eta^{2/3}}\bigg)^a \bb E |\ul{H\G}+\ul{\G}^2|^{n-a}\,.
	\end{equation}
	In fact, \eqref{6.2} is much easier to prove than \eqref{3.3}, as it does not require the exploration of cusp fluctuation. By \eqref{6.2} and Lemma \ref{lemma4.11}, we get 
	\begin{equation} \label{6.1}
		\ul{H\G}+\ul{\G}^2 \prec \frac{\im m+\Lambda}{N\eta}+\frac{(\im m+\Lambda)^{1/2}}{(N\eta)^{3/2}}+\frac{(\im m+\Lambda)^2}{q^2} \frac{1}{N\eta^{2/3}}\leq \frac{C}{N^2\eta^2}+\frac{C}{N\eta^{2/3}}\,.
	\end{equation}
The resolvent identity, Corollary \ref{cor4.2} and Lemma \ref{lemma4.7} imply
\begin{equation*}
	\ul{H\G}+\ul{\G}^2=1+\ii \eta\ul{\G}+\ul{\G}^2+\frac{w}{N}\sum_i \G_{i'i}-\frac{f}{N^2}\sum_{\alpha i}\G_{\alpha i}=1+m^2+\frac{w}{N}\sum_i \G_{i'i}+O_\prec\Big(\frac{1}{N^2\eta^2}+\frac{1}{N\eta^{2/3}}\Big)\,.
\end{equation*} 
Combining the above with \eqref{6.1} and $|w|^{-1}\leq 2$, we conclude the proof.
\end{proof}

\subsection{Matrix flows}
Let $W\in \bb R^{N\times N}$ denote the real Ginibre ensemble, i.e.\,$W_{ij} (1\leq i,j\leq N)$ are i.i.d.\,with $W_{ij}\overset{d}{=}\cal N(0,1/N)$. We also assume $W$ and $A$ are independent. Denote 
\[
B(t)\deq \e^{-t/2}B+\sqrt{1-\e^{t/2}}W\quad \mbox{and} \quad A(t)\deq B(t)+ f\b e \b e^*
\]
for any $t\in [0,\infty]$. It is easy to see that $A(0)=A$ and $A(\infty)=W+f\b e \b e^*$. Accordingly, for $w \in \bb C$ and $\eta>0$, we define the Hermitization of $A(t)$ and its Green function by
\begin{equation} \label{matrix flow}
\widetilde{H}_w(t)\deq
\begin{pmatrix}
	0&  A(t)-w\\
	A^*(t)-\bar{w}&0
\end{pmatrix} \quad \mbox{and} \quad \G_w(t;\ii\eta)\deq (\widetilde{H}_w(t)-\ii \eta)^{-1}
\end{equation}
respectively. In $\widetilde{G}(\cdot,\cdot)$, the first variable is real and the second variable is purely imaginary. We sometimes use the abuses of notations $\widetilde{G}(t)\equiv \widetilde{G}(t,\cdot)$ and $\widetilde{G}(\ii \eta)\equiv \widetilde{G}(\cdot,\ii\eta)$.  In addition, 
\begin{equation*}
\widetilde{H}(t)\deq \widetilde{H}_0(t)\,, \quad H(t)\deq \begin{pmatrix}
	0&  B(t)\\
	B^*(t)&0
\end{pmatrix}\,,\quad \mbox{and} \quad 	\partial_{\i\j} F\deq \frac{\partial F}{\partial H_{\i\j}(t)}
\end{equation*}
for differentiable functions $F$ of $\widetilde{H}_w(t)$. It is easy to check that
\begin{equation} \label{diff3}
\partial_{\i\j} \widetilde{G}(t)_{\k\l}=-\widetilde{G}_{\k\i}(t)\widetilde{G}_{\j\l}(t)-\widetilde{G}_{\k\j}(t)\widetilde{G}_{\i\l}(t)\,.
\end{equation}

Note that the entries of $B(t)$ satisfies $\bb E B_{ij}(t)=0$, $\var(B_{ij}(t))=N^{-1}$ and $\cal C_k(B_{ij}(t))=O_k(1/(Nq^{k-2}))$ for all fixed $k\geq 3$. As a result, everything we have proved so far for $\G$, we can repeat exactly the same proof for $\G(t)$.

\begin{lemma} \label{lemma 5.2}
Let $t \in [0,\infty]$. All results stated in Section \ref{sec4.4} concerning $\G=\G(0)$ also hold for $\G(t)$.
\end{lemma}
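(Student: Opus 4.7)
The plan is to observe that every argument in Sections \ref{sec3} and \ref{sec4.4} accesses the centered matrix $B$ only through three structural inputs: $\mathbb{E} B_{ij}=0$, $\mathrm{var}(B_{ij})=1/N$, and the cumulant bound $\mathcal{C}_k(B_{ij})=O_k(N^{-1}q^{-k+2})$ for $k\geq 3$ from Lemma \ref{Tlemh}. If the interpolated entries $B_{ij}(t)$ satisfy the same three properties with constants uniform in $t\in[0,\infty]$, then every cumulant expansion (applications of Lemma \ref{cumulant}, the remainder estimates, the counting of derivatives $\partial_{i\alpha}^r$), together with all Ward identities, isotropic estimates, and stability analyses, transfers verbatim under the mechanical substitutions $B\mapsto B(t)$, $H\mapsto H(t)$, $\widetilde{G}\mapsto \widetilde{G}(t)$. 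Importantly, the differentiation rule \eqref{diff3} matches \eqref{diff}, and the rank-one decomposition $\widetilde{H}_w(t)=H(t)+f\b e\b e^*-(\cdots)$ has the same algebraic shape as in the $t=0$ case.

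First I would verify the three structural properties. Since $B$ and $W$ are independent and cumulants are additive under independent sums,
\[
\mathcal{C}_k(B_{ij}(t)) = e^{-tk/2}\,\mathcal{C}_k(B_{ij}) + (1-e^{-t})^{k/2}\,\mathcal{C}_k(W_{ij}).
\]
The $k=1$ contributions vanish; for $k=2$ one gets $e^{-t}N^{-1}+(1-e^{-t})N^{-1}=N^{-1}$; for $k\geq 3$, $\mathcal{C}_k(W_{ij})=0$ by Gaussianity of $W$, and the remaining piece is bounded by $e^{-tk/2}\cdot O_k(N^{-1}q^{-k+2})\leq O_k(N^{-1}q^{-k+2})$ uniformly in $t\in[0,\infty]$. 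The endpoint $t=\infty$ (where $B(\infty)=W$) is immediate since Gaussians satisfy the higher-cumulant bound trivially.

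Second, I would transfer the Section \ref{sec3} inputs (Theorem \ref{theorem 4.1} and Theorem \ref{thmHstrong}) to the ensemble $H(t)$; these feed into Corollary \ref{cor4.2}, and their proofs use exactly the same three structural inputs. With those in hand, I would walk through the Section \ref{sec4.4} statements in sequence---Theorem \ref{theoremA}, Proposition \ref{prop5.2}, Lemma \ref{lemmadelocalization}, Lemma \ref{lemma 5.1}, Lemma \ref{lemma4.7}, Lemma \ref{lemma4.8}, Proposition \ref{prop4.5}, Corollary \ref{cor4.7}, and Theorem \ref{radius and delocalization} (ii)---and in each step observe that the proof relies only on (a) the cumulant expansion applied to entries of $H(t)$, (b) the uniform cumulant bounds above, and (c) deterministic facts about $m(w,z)$ and the algebraic structure of $\widetilde{H}_w$, all of which are $t$-independent. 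The one auxiliary input not of this form is the a-priori bound $\|H(t)\|=O(1)$ with very high probability used in Lemma \ref{lemma 5.1}; I would obtain it by the moment method from $\mathbb{E}|B_{ij}(t)|^k=O_k(N^{-1}q^{-k+2})$ for every fixed $k\geq 2$, a direct consequence of the cumulant estimates above.

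The main obstacle, if any, is purely bookkeeping: tracking that every implicit constant remains uniform in $t$. Since the $t$-dependence of $B(t)$ enters only through factors $e^{-tk/2}\leq 1$, every constant improves (or is unchanged) relative to the $t=0$ case, and no uniformity is lost. Hence the conclusion of Lemma \ref{lemma 5.2} holds throughout $t\in[0,\infty]$.
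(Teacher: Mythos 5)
Your proposal is correct and follows essentially the same route the paper takes: it reduces the lemma to checking that $\mathbb{E}B_{ij}(t)=0$, $\var(B_{ij}(t))=N^{-1}$, and $\mathcal{C}_k(B_{ij}(t))=O_k(N^{-1}q^{-k+2})$ for $k\geq 3$ hold uniformly in $t$, which the paper states without spelling out the cumulant-additivity calculation you supply. Your additional observation that the moment bound $\mathbb{E}|B_{ij}(t)|^k=O_k(N^{-1}q^{-k+2})$ (needed for the moment-method estimate of $\|H(t)\|$ in Lemma~\ref{lemma 5.1}) also follows from the cumulant bounds is a sensible closing of a small gap that the paper leaves implicit.
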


Next we would like to control the number of eigenvalues of $\widetilde{H}_w$ with size less than $N^{-3/4}$.

\begin{lemma} \label{lemma5.3}
For $(w,\ii\eta) \in \b S_\delta^{(1)}$, we have
\begin{equation} \label{blb}
\bb E\frac{\dd \ul{\G}(t)}{\dd t} \prec  N^{10\delta}\Big(\frac{\eta^{1/3}}{\e^{t/2}q}+\frac{1}{\e^{t/2}N\eta q}\Big) \eqd \cal E_4
\end{equation}
and
\begin{equation} \label{121212}
	\bb E\frac{\dd |\ul{\G}(t)-\ul{\G}(\infty)|^2}{\dd t} \prec  N^{10\delta}\Big(\frac{1}{\e^{t/2}N\eta^{2/3}q}+\frac{1}{\e^{t/2}N^2\eta^2 q}\Big) 
\end{equation}
for all $t\in [0,\infty]$. 
\end{lemma}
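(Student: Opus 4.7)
The plan is to apply the standard cumulant-expansion / Gaussian-integration-by-parts technique for Ornstein--Uhlenbeck-type matrix interpolations, combined with the cusp fluctuation tools developed in Section \ref{sec3.4}. First, writing the derivative via the resolvent identity,
\[
\frac{\dd \ul{\G}(t)}{\dd t} = \sum_{i,j} \bigl(\partial_{i,j+N}\ul{\G}(t)\bigr)\cdot \frac{\dd B_{ij}(t)}{\dd t}\,,
\]
and splitting $\dd B_{ij}(t)/\dd t$ into its $B$-part and its $W$-part, the first is handled by Lemma \ref{cumulant} (cumulant expansion in $B_{ij}$) and the second by Stein's lemma for $W_{ij}\sim \cal N(0,N^{-1})$. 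Tracking chain-rule factors --- in particular $\partial_{B_{ij}} = \e^{-t/2}\partial_{B_{ij}(t)}$ and $\partial_{W_{ij}} = \sqrt{1-\e^{-t}}\,\partial_{B_{ij}(t)}$ on functions of $B(t)$ --- and using $\var(B_{ij})=\var(W_{ij})=1/N$, the second-order (Gaussian) contributions from $B$ and $W$ cancel identically. What remains is
\[
\bb E\frac{\dd \ul{\G}(t)}{\dd t} = -\sum_{ij}\sum_{k\geq 2}\frac{\e^{-(k+1)t/2}}{2\,k!}\,\cal C_{k+1}(B_{ij})\,\bb E\bigl[(\partial_{i,j+N})^{k+1}\ul{\G}(t)\bigr] + R\,,
\]
with remainder $R$ negligible for $\ell$ sufficiently large.

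The dominant contribution is $k=2$, whose coefficient has size $\e^{-3t/2}/(Nq)$. Applying \eqref{diff3} iteratively, $(\partial_{i,j+N})^{3}\ul{\G}(t)$ is a signed sum of products of four entries of $\G(t)$ with indices in $\{i,j+N,\k\}$. To bound $\sum_{ij}|(\partial_{i,j+N})^{3}\ul{\G}(t)|$, I would invoke Lemma \ref{lemma 5.2} to transfer the local laws of Section \ref{sec4.4} to $\G(t)$, use the Ward-type identity \eqref{warddd} to handle sums such as $\sum_{\i}|\G_{\i\j}|^2$, and apply Lemma \ref{lemma4.7} to deal with sums of entries of $\G(t)$ over restricted index blocks. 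The edge smallness $\im m\prec \kappa^{1/2}+\eta^{1/3}$ then enters exactly as in the cusp-fluctuation averaging of Proposition \ref{prop4.1}. Collecting these bounds yields size $\e^{-3t/2}q^{-1}\eta^{1/3}$ up to $N^{o(1)}$ factors, while cumulant terms with $k\geq 3$ are further suppressed by additional powers of $1/q$. Using the trivial estimate $\e^{-3t/2}\leq \e^{-t/2}$ delivers \eqref{blb}.

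For \eqref{121212}, set $\Delta(t)=\ul{\G}(t)-\ul{\G}(\infty)$ and apply the same framework to $F=|\Delta|^2$. Using $\dd|\Delta|^2/\dd t = 2\re(\bar\Delta\,\dd\ul{\G}(t)/\dd t)$, the Gaussian pieces again cancel between $B$ and $W$; the only subtlety is that Stein's lemma on $W_{ij}$ now produces a direct-$W$ term from $\ul{\G}(\infty)$, but this term is controlled by the local law for the limiting Ginibre Hermitization, which follows from Lemma \ref{lemma 5.2} at $t=\infty$. Every surviving cumulant term, after expansion of the product rule on $|\Delta|^2$, carries at least one factor of $\Delta$ or $\bar\Delta$ (terms where all derivatives fall on one copy of $\ul{\G}(t)$ leave the other copy of $\Delta$ untouched). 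By Theorem \ref{thmHstrong} and Corollary \ref{cor4.2}, transferred to general $t$ by Lemma \ref{lemma 5.2}, we have $|\Delta(t)|\prec (N\eta)^{-1}$, which supplies an extra factor of $(N\eta)^{-1}$ relative to \eqref{blb}. Multiplying $\cal E_4$ by this factor yields exactly the right-hand side of \eqref{121212}.

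The main technical obstacle is the bookkeeping required to show that every monomial in $(\partial_{i,j+N})^{k+1}\ul{\G}(t)$, after expansion by \eqref{diff3}, admits a bound matching the edge-cusp scale $\eta^{1/3}$ rather than the naive constant scale; this is the same cusp-averaging mechanism driving Proposition \ref{prop4.1}, and no ingredients beyond those already assembled in Sections \ref{sec3} and \ref{sec4.4} are needed. A secondary nuisance in \eqref{121212} is the presence of $W$ inside $\ul{\G}(\infty)$, which creates boundary terms under Stein's lemma; these are absorbed by the same isotropic estimate Lemma \ref{lemma4.7} applied at $t=\infty$.
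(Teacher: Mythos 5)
Your framework is correct: the OU interpolation, the split of $\dot B_{ij}(t)$ into a $B$-part and a $W$-part, the exact cancellation of the second-order (Gaussian) contributions, and the identification of the third-cumulant term as the dominant one are all exactly as in the paper. The second half of your argument, for \eqref{121212} --- noting that $\ul{\G}(\infty)$ depends only on $W$, so the $B$-cumulant derivatives leave at least one factor of $\Delta$ intact, supplying the extra $(N\eta)^{-1}$ --- also matches the paper.

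The gap is in your claim that ``collecting these bounds yields size $\e^{-3t/2}q^{-1}\eta^{1/3}$ up to $N^{o(1)}$ factors'' via the Ward identity, Lemma~\ref{lemma4.7}, and ``the same cusp-averaging mechanism driving Proposition~\ref{prop4.1}.'' This estimate does not follow from those tools alone. Consider the dominant piece of $U_2$, schematically $\frac{\e^{-t}}{N^2q}\sum_{\alpha i}(\G^2)_{\alpha\alpha}\G_{ii}\G_{\alpha i}$. The diagonal factor $\G_{ii}$ blocks any application of Lemma~\ref{lemma4.7} to the $i$-sum (Lemma~\ref{lemma4.7} controls unweighted row/column sums), so the best you can do directly is Cauchy--Schwarz plus Ward, which gives $\sum_{\alpha i}|\G_{\alpha i}|\prec N^{3/2}((\im m+\Lambda)/\eta)^{1/2}$. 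Feeding in $\im m\asymp\eta^{1/3}$ and $\Lambda\asymp(N\eta)^{-1}$ at the critical scale $\eta\sim N^{-3/4}$ leaves you with roughly $\e^{-t}q^{-1}$, which is larger by a factor $\sim\eta^{-1/3}$ than the target $\cal E_4$. The paper's mechanism for closing this gap is to expand $\G_{\alpha i}$ with the resolvent identity and re-apply Lemma~\ref{cumulant}; but, as the paper points out, doing this once produces index-matched factors $\G_{ii'},\G_{i'i}\asymp 1$ (since $M_{ii'}=w\fr m\asymp 1$), which prevent a naive Ward estimate from closing. One must therefore iterate the resolvent-identity-plus-cumulant-expansion step, observing that each iteration reproduces a term of the same structure multiplied by an extra $q^{-1}$, and stop after finitely many rounds. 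This iterative removal of index-matched $O(1)$ block-off-diagonal entries is the genuinely new step here compared to Proposition~\ref{prop4.1}, and it is absent from your proposal; without it the claimed bound on the $k=2$ term does not hold.

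A smaller issue: when you invoke Stein's lemma on $W$ for \eqref{121212}, the direct derivative of $\ul{\G}(\infty)$ indeed produces an extra term, but you dismiss it too quickly. That term is a cross product of $(\G^2)_{i\alpha}(t)$ and $(\G^{*2}(\infty))_{i\alpha}$ summed over $i,\alpha$, and controlling it at the right scale again requires the same iterated expansion, not merely the local law for the Ginibre Hermitization.
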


\begin{proof}
For simplicity we shall not write the parameter $t$ in $\G$. Note that
\begin{equation} \label{6.3}
\bb E\frac{\dd \ul{\G}}{\dd t} = -\frac{1}{2N}\sum_{\alpha i}\bb E \dot A_{\alpha i}(t)(\G^2)_{i\alpha }-\frac{1}{2N}\sum_{i\alpha} \bb E\dot A_{i\alpha} (t)(\G^2)_{\alpha i}\,.
\end{equation}
By Lemma \ref{cumulant}, we have
	\begin{align}
 &\quad -\frac{1}{2N}\sum_{\alpha i}\bb E \dot A_{\alpha i}(t)(\G^2)_{i\alpha }\nonumber\\
 &=\frac{\e^{-t/2}}{4N}\sum_{\alpha i}\bb E B_{\alpha i}(\G^2)_{i\alpha }-\frac{\e^{-t}}{4N\sqrt{1-\e^{-t}}}\sum_{\alpha i} \bb E W_{\alpha i}(\widetilde{G}^2)_{i\alpha}\nonumber\\
 &=\frac{\e^{-t/2}}{4N}\sum_{r=1}^{\ell}\sum_{\alpha i}\frac{1}{r!}\cal C_{r+1}(B_{\alpha i}) \e^{-r/2}\bb E \partial_{\alpha i}^r (\G^2)_{i\alpha}-\frac{\e^{-t}}{4N^2}\sum_{\alpha i} \bb E  \partial_{\alpha i} (\G^2)_{i\alpha}+O(\e^{-r/2}N^{-1})\nonumber\\
 &=\frac{\e^{-t/2}}{4N}\sum_{r=2}^{\ell}\sum_{\alpha i}\frac{1}{r!}\cal C_{r+1}(B_{\alpha i}) \e^{-r/2}\bb E \partial_{\alpha i}^r (\G^2)_{i\alpha}+O(\e^{-r/2}N^{-1})\eqd \sum_{r=2}^\ell U_r+O(\e^{-r/2}N^{-1})\label{7.4}\,.
 \end{align}

When $r=2$, we have
\begin{equation*}
	\begin{aligned}
U_2=&\,O(\e^{-t}N^{-2}q^{-1}) \sum_{\alpha i}\bb E (\G^2)_{\alpha\alpha}\G_{ii}\G_{\alpha i}+O(\e^{-t}N^{-2}q^{-1}) \sum_{\alpha i}\bb E (\G^2)_{ii}\G_{\alpha\alpha}\G_{\alpha i}\\
&+O(\e^{-t}N^{-2}q^{-1}) \sum_{\alpha i}\bb E (\G^2)_{\alpha i}\G^2_{\alpha i}+O(\e^{-t}N^{-2}q^{-1}) \sum_{\alpha i}\bb E (\G^2)_{\alpha i}\G_{\alpha \alpha}\G_{ii}\\
&+O(\e^{-t}N^{-2}q^{-1}) \sum_{i\alpha }\bb E (\G^2)_{\alpha i}\G_{\alpha \alpha}\G_{ii}\eqd\, U_{2,1}+U_{2,2}+U_{2,3}+U_{2,4}+U_{2,5}\,.
	\end{aligned}
\end{equation*}
By the resolvent identity $\bar w\G_{\alpha i}=(\G H)_{\alpha i'}+fN^{-1}\sum_j\G_{\alpha j}-\delta_{\alpha i'}-\ii \eta \G_{\alpha i'}$ and $|w|^{-1}\leq 2$, we get
	\begin{align}	
	\hspace{-0.3cm}	U_{2,1}&=O(\e^{-t}N^{-2}q^{-1}) \sum_{\alpha ij}\bb E (\G^2)_{\alpha\alpha}\G_{ii}\G_{\alpha j }H_{ji'}+O(\e^{-t}N^{-3}q^{-1}f) \sum_{\alpha 
			i j}\bb E (\G^2)_{\alpha\alpha}\G_{ii}\G_{\alpha j}\nonumber\\ 
		&=O(\e^{-t}N^{-2}q^{-1}) \sum_{\alpha ij}\bb E (\G^2)_{\alpha\alpha}\G_{ii}\G_{\alpha j }H_{ji'}+O_{\prec}(\cal E_4)\label{U21}\,.
		\end{align}
Here in the second step we used 
\[
\sum_j\G_{\alpha j} \prec \frac{\im m+1/(N\eta)}{\eta f}\,,
\] 
which is deduced from Lemmas \ref{lemma4.7} and \ref{lemma 5.2}. We expand the first term on RHS of \eqref{U21} by Lemma \ref{cumulant}, i.e.
\[
U_{2,1}=O(\e^{-t}N^{-2}q^{-1}) \sum_{k=1}^{\ell}\frac{1}{k!}\cal C_{k+1}(H_{ji'})\sum_{\alpha ij}\bb E \partial_{ji'}^k\big((\G^2)_{\alpha\alpha}\G_{ii}\G_{\alpha j })+O_{\prec}(\cal E_4)\,.
\]
We then apply \eqref{diff} and estimate the results using \eqref{warddd}, \eqref{4q22}, Lemmas \ref{lemma4.7} and \ref{lemma 5.2}. Here the off-diagonal terms $\G_{\alpha i'},\G_{\alpha j}, \G_{ij},\G_{i'j}$ can be estimated through \eqref{warddd} and Lemma \ref{lemma4.7}, and the diagonal terms of $\G$ are bounded by $G_{\i\i}\prec \eta^{1/3}+1/(N\eta)$ using \eqref{4q22}. However, as $\G_{ii'},\G_{i'i} \asymp 1$, the worst case happens when the index $i'$ in $H_{ji'}$ and the index $i$ in $\G_{ii}$ are matched. This leads to 
\begin{equation} \label{7.7}
	\begin{aligned}
U_{2,1}=&\,	O(\e^{-t}N^{-3}q^{-1}) \sum_{\alpha ij}\bb E (\G^2)_{\alpha\alpha}\G_{ij} \G_{i' i}\G_{\alpha j }+O(\e^{-t}N^{-3}q^{-1}) \sum_{\alpha ij}\bb E (\G^2)_{\alpha\alpha}\G_{ji} \G_{i i'}\G_{\alpha j }\\
&+O(\e^{-t}N^{-3}q^{-2}) \sum_{\alpha ij}\bb E (\G^2)_{\alpha\alpha}\G_{jj} \G_{i' i}\G_{ii'}\G_{\alpha j} +O_{\prec}(\cal E_4)\,.
\end{aligned}
\end{equation} 
The first three terms on RHS of \eqref{7.7} cannot be naively bounded by $O_{\prec}(\cal E_4)$.  Luckily, we can proceed by again applying the resolvent identity $w\G_{i'i}=(H\G)_{ii}+fN^{-1}\sum_{\beta} \G_{\beta i}-1-\ii \eta \G_{ii}$ and Lemma \ref{cumulant}. Similar to \eqref{U21} and \eqref{7.7}, we have
\begin{equation*} 
	\begin{aligned}
		U_{2,1}=&\,	O(\e^{-t}N^{-3}q^{-1}) \sum_{\alpha ij}\bb E (\G^2)_{\alpha\alpha}\G_{ij} \G_{\alpha j }+O(\e^{-t}N^{-3}q^{-1}) \sum_{\alpha ij}\bb E (\G^2)_{\alpha\alpha}\G_{ji} \G_{\alpha j }\\
		&+O(\e^{-t}N^{-2}q^{-2}) \sum_{\alpha j}\bb E (\G^2)_{\alpha\alpha}\G_{jj} \G_{\alpha j} +O_{\prec}(\cal E_4) \eqd U_{2,1,1}+U_{2,1,2}+U_{2,1,3}\,.
	\end{aligned}
\end{equation*}
Now for the term $U_{2,1,1}$, if we again use resolvent identity on $\G_{ij}$ and expand via Lemma \ref{cumulant}, there will no longer be other Green functions that matches the index $i$, and we can get $U_{2,1,1}\prec \cal E_4$. The same holds for $U_{2,1,2}$. As a result, we have
\begin{equation} \label{7.8}
	U_{2,1}=U_{2,1,3}+O_{\prec}(\cal E_4)=O(\e^{-t}N^{-2}q^{-2}) \sum_{\alpha j}\bb E (\G^2)_{\alpha\alpha}\G_{jj} \G_{\alpha j}+O_{\prec}(\cal E_4)\,.
\end{equation}
Note that $U_{2,1,3}$ and $U_{2,1}$ are similar in structure, yet $U_{2,1,3}$ is $q^{-1}$ times smaller than $U_{2,1}$, due to the extra factor $q^{-1}$. We can then iterate \eqref{7.8} finitely many times, and get $U_{2,1}\prec \cal E_4$ as desired. Similar arguments also work for $U_{2,2},...,U_{2,5}$. Thus $U_2\prec \cal E_4$.

When $r=3$, by \eqref{diff3}, Lemma \ref{lemma 5.2}, \eqref{warddd} and \eqref{4q22} we get
\begin{equation} \label{7.6}
	\begin{aligned}
	U_{3}&=O(\e^{-t}N^{-2}q^{-1}) \sum_{\alpha i}\bb E (\G^2)_{\alpha i}\G^3_{\alpha i}+O_{\prec}(\cal E_4)\\
	&=O(\e^{-t}N^{-2}q^{-1}) \cdot \frac{\eta^{1/3}+1/(N\eta)}{\eta}\sum_{\alpha i} \bb E|\G_{\alpha i}^3|+O_{\prec}(\cal E_4)\,.
	\end{aligned}
\end{equation}
In addition, Lemmas \ref{lemma4.8} and \ref{lemma 5.2} imply
\begin{equation} \label{7.9}
\sum_{i} \bb E |\G_{\alpha i}^3| \prec N\cdot \Big(\frac{\im m+1/(N\eta)}{N\eta}\Big)^{3/2}\,,
\end{equation}
and combining \eqref{7.6} and \eqref{7.9} we get $U_3\prec \cal E_4$. 

When $r \geq 4$, the estimates of $U_r$ are easier than that of $U_4$, due to the decay of cumulants. As a result, from \eqref{7.4} we have 
\[
-\frac{1}{2N}\sum_{\alpha i}\bb E \dot A_{\alpha i}(t)(\G^2)_{i\alpha } \prec \cal E_4\,.
\]
Due to symmetry, the second term on RHS of \eqref{6.3} can be handed in the same way; this finishes the proof of \eqref{blb}.

For the proof of \eqref{121212}, we have
	\begin{align}
&\, \bb E\frac{\dd  |\ul{\G}(t)-\ul{\G}(\infty)|^2}{\dd t} =	\bb E\frac{\dd  [(\ul{\G}(t)-\ul{\G}(\infty))(\ul{\G}^*(t)-\ul{\G}^*(\infty))]}{\dd t}\nonumber\\
		=&\, -\frac{1}{2N}\sum_{\alpha i}\bb E \dot A_{\alpha i}(t)(\G^2)_{i\alpha }(t)(\ul{\G}^*(t)-\ul{\G}^*(\infty)) -\frac{1}{2N}\sum_{i\alpha}\bb E \dot A_{i\alpha }(t)(\G^2)_{\alpha i}(t)(\ul{\G}^*(t)-\ul{\G}^*(\infty))\nonumber\\
		&\,-\frac{1}{2N}\sum_{\alpha i}\bb E \dot A_{\alpha i}(t)(\G^{*2})_{i\alpha }(t)(\ul{\G}(t)-\ul{\G}(\infty)) -\frac{1}{2N}\sum_{i\alpha}\bb E \dot A_{i\alpha }(t)(\G^{*2})_{\alpha i}(t)(\ul{\G}(t)-\ul{\G}(\infty))\label{7.13}\,.
	\end{align}
Due to symmetry, we only look at the first term on RHS of \eqref{7.13}. Similar to \eqref{7.4}, we get
\begin{equation}
\begin{aligned}
&-\frac{1}{2N}\sum_{\alpha i}\bb E \dot A_{\alpha i}(t)(\G^2)_{i\alpha }(t)(\ul{\G}^*(t)-\ul{\G}^*(\infty))\\
=&\,\frac{\e^{-t/2}}{4N}\sum_{r=2}^{\ell}\sum_{\alpha i}\frac{1}{r!}\cal C_{r+1}(B_{\alpha i}) \bb E \widetilde{\partial}_{\alpha i}^r [(\G^2)_{i\alpha}(t)(\ul{\G}^*(t)-\ul{\G}^*(\infty))]+O(\e^{-r/2}N^{-1})\,,
\end{aligned}
\end{equation}
where we abbreviate $\widetilde{\partial}_{\i\j} F=\partial F/\partial H_{\i\j}(0)$. The rest of the proof is essentially the same to that of \eqref{blb}: we first apply \eqref{diff3}, and then explore the index matching through the resolvent identity and Lemma \ref{cumulant}. We omit the details.
\end{proof}

By Lemma \ref{lemma5.3}, we see that
\begin{equation} \label{7.15}
	\bb E \ul{\G}(0)-\bb E \ul{\G}(\infty)\prec \frac{\eta^{1/3}}{q}+\frac{1}{N\eta q}
\end{equation}
as well as
\begin{equation} \label{7.16}
	\bb E |\ul{\G}(0)-\ul{\G}(\infty)|^2 \prec \frac{1}{N\eta^{2/3}q}+\frac{1}{N^2\eta^2q}\,.
\end{equation}

To interpolate between $A(\infty)$ and $W$, let $\b e_1=(1,0,...,0)^* \in \bb R^N$, and set $\widehat{W}\deq W+f \b e_1^*\b e_1$. By the invariance of $W$, see that $\widehat{W}$ and $A(\infty)$ have the same spectral distribution. For $w \in \bb C$ and $\eta>0$, we denote the Hermitization of $\widehat{W}$ and its Green function by
\begin{equation} \label{astronaut}
\widehat{H}_w\deq
\begin{pmatrix}
	0&  \widehat{W}-w\\
	\widehat{W}-\bar{w}&0
\end{pmatrix}  \quad \mbox{and} \quad \widehat{G}\deq (\widehat{H}_w-\ii \eta)^{-1}
\end{equation}
respectively. In addition, we set
\begin{equation*} 
	{H}_w^W\deq
	\begin{pmatrix}
		0&  {W}-w\\
		{W}-\bar{w}&0
	\end{pmatrix}  \quad \mbox{and} \quad {G}^W\deq ({H}^W_w-\ii \eta)^{-1}
\end{equation*}
Next we compare $ \bb E\ul{\widehat{G}}=\bb E\ul{\G}(\infty)$ and $ \bb E\ul{G}^W$.

\begin{lemma} \label{lemma5.4}
	For $(w,\ii\eta)\in \b S_{\delta}^{(1)}$, we have
	\begin{equation} \label{6.12}
		\bb E \ul{\widehat{G}}-\bb E \ul{{G}}^W\prec \frac{1}{N\eta }\Big(\frac{1}{q}+\frac{1}{N^{1-\delta}\eta}\Big)
	\end{equation}
and
\begin{equation} \label{5.19}
	\bb E |\ul{\widehat{G}}-\ul{{G}}^W|^2 \prec\frac{1}{N^2\eta^2 }\Big(\frac{1}{q}+\frac{1}{N^{1-\delta}\eta}\Big)\,.
\end{equation}
\end{lemma}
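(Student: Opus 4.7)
I would follow an interpolation strategy analogous to Lemma~\ref{lemma5.3}. Set $W(s) \deq W + sf\b e_1\b e_1^*$ for $s \in [0,1]$, and denote by $G(s)$ the Green function of its Hermitization, so that $G(0) = G^W$ and $G(1) = \widehat G$. With $\b x \deq (\b e_1^*, \b 0^*)^*$, $\b y \deq (\b 0^*, \b e_1^*)^* \in \bb R^{2N}$, differentiating in $s$ yields $\partial_s \tr G(s) = -f[(G(s)^2)_{N+1,1} + (G(s)^2)_{1,N+1}]$. Since $H_w(s)$ has vanishing diagonal blocks, Lemma~\ref{lemma4.11}(ii) applies to $G(s)$ and gives the conjugation identity $(G(s)^2)_{1,N+1} = -\overline{(G(s)^2)_{N+1,1}}$; combined with $(G(s)^2)_{\i\j} = -\ii\partial_\eta G(s)_{\i\j}$ and integrating in $s$,
\[
  \bb E \ul{\widehat G} - \bb E \ul{G^W} \;=\; \frac{\ii f}{N}\int_0^1 \re \partial_\eta \bb E G(s)_{N+1,1}\,ds.
\]

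The task reduces to estimating $\re\partial_\eta\bb E G(s)_{N+1,1}$. By the Woodbury identity applied to the rank-two perturbation $H_w(s) = H^W_w + sVTV^*$ with $V = [\b x,\b y]$, $T = f\sigma_1$, $\sigma_1 = \left(\begin{smallmatrix}0 & 1\\1 & 0\end{smallmatrix}\right)$,
\[
  G(s)_{N+1,1} \;=\; G^W_{N+1,1} - \sum_{a,b \in \{1,N+1\}} G^W_{N+1,a}\,K(s)_{ab}\,G^W_{b,1},\quad K(s) = \big((sT)^{-1}+V^*G^W V\big)^{-1},
\]
expressing $G(s)_{N+1,1}$ through finitely many $G^W$ entries at indices $\{1, N+1\}$. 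Because $W$ is Gaussian and only the second cumulant contributes in the cumulant expansion, one can establish the sharp isotropic local law for $G^W$: for $(w,\ii\eta) \in \b S^{(1)}_\delta$, $G^W_{\i\j} - M_{\i\j} \prec (N\eta)^{-1/2}$ and $\bb E G^W_{\i\j} = M_{\i\j} + O_\prec((N\eta)^{-1})$, with analogous bounds for $\partial_\eta G^W_{\i\j}$. Substituting into the Woodbury formula and expanding $K(s)$ in powers of $(sT)^{-1} \asymp 1/(sq)$ produces a controllable series.

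After the expansion, the surviving contributions split into: (i) the first-order Woodbury correction of size $1/(sq)$ times entries of $V^*(G^W)^2 V \asymp \partial_\eta M \asymp \eta^{-1/3}$, which after $\partial_\eta$, multiplication by $\ii f/N \asymp q/N$, and $s$-integration (whose logarithmic singularity at $s=0$ contributes only an $N^\varepsilon$ factor), yields $O\big(\frac{1}{Nq\eta}\big)$; and (ii) the isotropic fluctuation $(N\eta)^{-1/2}$ in $G^W$, contributing $O((N\eta)^{-1})$ to the expectation and, after $\partial_\eta$ and the prefactor, $O\big(\frac{1}{N^{2-\delta}\eta^2}\big)$. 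Summing gives \eqref{6.12}. The bound \eqref{5.19} then follows from the decomposition $\bb E|X|^2 = |\bb E X|^2 + \var(X)$ with $X = \ul{\widehat G} - \ul{G^W}$: the first term is bounded by the squared first-moment estimate, while $\var(X) \leq O(1/N^2)$ by standard concentration of normalized Green function traces (the Hermitizations differ by a rank-two perturbation), both smaller than the target.

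The main obstacle is ensuring the cancellation of the $s$-independent leading contribution $\partial_\eta \re(\bar w\frak m) \asymp \eta^{-1/3}$, which when multiplied by $\ii f/N$ would naively give $O(1/(N\eta^{1/3}))$, exceeding the target in the dense regime $\xi \approx 1/2$. The cancellation is ultimately guaranteed by eigenvalue interlacing (the rank-two perturbation of $H^W_w$ moves only two eigenvalues out to $\pm(f - 1) + O(1)$, contributing only $O(\eta/(Nf^2))$ to $\ul{\widehat G}$), but to extract it analytically one must track the Woodbury expansion carefully, in the spirit of the cusp fluctuation handling of Proposition~\ref{prop4.1}.
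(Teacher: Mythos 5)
There is a genuine gap, and it sits exactly at the heart of the lemma. The whole content of \eqref{6.12} is the mechanism by which the trivial rank-two/interlacing bound $|\ul{\widehat{G}}-\ul{G}^W|=O((N\eta)^{-1})$ improves to $\frac{1}{N\eta}\bigl(\frac1q+\frac{1}{N^{1-\delta}\eta}\bigr)$, and your proposal defers precisely this to an unperformed "careful tracking of the Woodbury expansion." The heuristic you offer in its place is not correct: eigenvalue interlacing does not say that the perturbation "only moves two eigenvalues out to $\pm(f-1)+O(1)$"; all $2N$ eigenvalues shift, and the cumulative effect of the bulk shifts on the trace of the resolvent at $z=\ii\eta$ is only controlled at the level $O((N\eta)^{-1})$, which is far above the target (the two outliers' own contribution $O(\eta/f^{2})$ is not the issue). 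So the key cancellation that removes the naive term of size $q\,\eta^{-1/3}/N$ is asserted, not proved. In addition, the inputs you take for granted are not available: the isotropic statements $\bb E G^W_{\i\j}=M_{\i\j}+O_\prec((N\eta)^{-1})$ and "analogous bounds for $\partial_\eta G^W_{\i\j}$" (i.e.\ for entries of $(G^W)^2$) at the cusp scaling $z=\ii\eta$, $\eta\le N^{-3/4+\delta}$, are nowhere established in the paper and would require their own cumulant-expansion arguments; the paper's entrywise law only gives errors of size $(N\eta)^{-1/6}+q^{-1/3}$. Finally, for \eqref{5.19} the claim $\var(\ul{\widehat{G}}-\ul{G}^W)=O(N^{-2})$ "by standard concentration" is unsupported: what is standard (from $|\ul{\widehat{G}}-m|\prec(N\eta)^{-1}$ or from interlacing) is only $\var\lesssim (N\eta)^{-2}$, which exceeds the target $\frac{1}{N^2\eta^2}\bigl(\frac1q+\frac1{N^{1-\delta}\eta}\bigr)$ throughout most of $\b S^{(1)}_\delta$; the paper instead reruns the first-moment argument at second order.

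For comparison, the paper's proof makes the gain explicit and short. It writes the rank-two difference exactly as in \eqref{5.222}, $\ul{\widehat{G}}-\ul{G}^W=-\frac{f}{2N}\bigl[(\widehat{G}G^W)_{11'}+(\widehat{G}G^W)_{1'1}\bigr]$, then multiplies by $(f-\bar w)$ and uses the resolvent identity \eqref{resolvent} together with Stein's formula (Gaussianity of $W$) to trade $(f-\bar w)\bb E(\widehat{G}G^W)_{11'}$ for terms bounded via the Ward/Cauchy--Schwarz estimate $\eta(\widehat{G}G^W)_{\i\j}\prec(\im\widehat{G}_{\i\i}\,\im G^W_{\j\j})^{1/2}$ and the cusp smallness $\im\widehat{G}_{\i\i}\prec f^{-1}+(N^{1-\delta}\eta)^{-1}$, $\im G^W_{\j\j}\prec (N^{1-\delta}\eta)^{-1}$, the latter coming from spectral decomposition plus the delocalization estimates \eqref{ev}--\eqref{ev2}. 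Dividing back by $(f-\bar w)\asymp q$ is exactly where the factor $\frac1q$ in \eqref{6.12} comes from. If you want to keep your Woodbury route, you would need to (a) prove the isotropic and $(G^W)^2$-entry estimates you invoke, and (b) carry out the $2\times2$ algebra far enough to exhibit that no factor of $f$ survives (it cancels between $K(s)^{-1}$ and the prefactor), rather than appealing to interlacing; as written, the argument does not close.
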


\begin{proof}
As in Corollary \ref{cor4.2} (ii), it is easy to show that 
\begin{equation} \label{1113}
\ul{\widehat{G}}-m \prec \frac{1}{N\eta} \quad \mbox{and} \quad \ul{{G}}^W-m \prec \frac{1}{N\eta}\,.
\end{equation}	
Let us denote the eigenvalues and corresponding $L^2$-normalized eigenvectors of $\widehat{H}$ by $\pm \widehat{\sigma}_{1},...,\pm \widehat{\sigma}_{N}$ and $\begin{pmatrix}
	{\widehat{\b v}_{1}} \\  \pm \widehat{\b w}_{1}
\end{pmatrix},...,\begin{pmatrix}
	{\widehat{\b v}_{N}} \\  \pm \widehat{\b w}_{N}
\end{pmatrix}$ respectively. Here $\widehat{\sigma}_{1},...,\widehat{\sigma}_{N}\geq 0$, $\widehat{\sigma}_{1}=\max_{i}\widehat{\sigma}_{i}$, and $\widehat{\b v}_{i},\widehat{\b w}_{i} \in \bb C^{N}$ for all $i$. Similar to the proof of Lemma \ref{lemmadelocalization}, we can show that
\begin{equation} \label{ev}
	\|\widehat{\b v}_{1}-\b e_1/\sqrt{2}\|_{\infty}+\|\widehat{\b w}_{1}-\b e_1/\sqrt{2}\|_{\infty} \prec N^{-1/2}f^{-1}
\end{equation}
and
\begin{equation} \label{ev2}
	\max_{i\geq 2} \|\widehat{\b v}_{i}\|_\infty+\max_{i\geq 2} \|\widehat{\b w}_{i}\|_\infty \prec N^{-1/2}\,,
\end{equation}
where $\b e_1\deq (1,0,...,0)\in \bb R^N$. By the resolvent identity, we have
\begin{equation} \label{5.222}
		\ul{\widehat{G}}-\ul{{G}}^W=-\frac{f}{2N}\sum_{\i}{G}^W_{\i 1'}\widehat{G}_{1 \i}-\frac{f}{2N}\sum_{\i}{G}^W_{1\i}\widehat{G}_{\i 1'}=-\frac{f}{2N}(\widehat{G}G^W)_{11'}-\frac{f}{2N}(\widehat{G}G^W)_{1'1}\,,
\end{equation}
and
	\begin{equation} \label{resolvent}
		\delta_{1'\i}+\ii \eta \widehat{G}_{1'\i}=({H}^W\widehat{G})_{1'\i}-\bar{w}\widehat{G}_{1\i}+f\widehat{G}_{1\i}\,,
	\end{equation}
where $H^W\deq H^W_0$. From \eqref{resolvent} and Stein's formula, we know that
	\begin{align}
	&(f-\bar{w})\bb E (\widehat{G}G^W)_{11}=\bb E\Big( G^W_{1'1}+\ii \eta (\widehat{G}G^W)_{1'1}-\sum_{i}{H}^W_{1'i}(\widehat{G}G^W)_{i1}\Big)\label{12345}\\
	=\,&\bb E\Big( G^W_{1'1}+\ii \eta (\widehat{G}G^W)_{1'1}+N^{-1}\sum_{i}\big(\widehat{G}_{i1'}(\widehat{G}G^W)_{i1}+\widehat{G}_{ii}(\widehat{G}G^W)_{1'1}+(\widehat{G}G^W)_{i1'}G^W_{i1}+(\widehat{G}G^W)_{ii}G^W_{1'1}\big)\Big)\,.\nonumber
	\end{align}
For any indices $\i,\j$, using Cauchy-Schwarz and the Ward identity, we can get 
\[
\eta (\widehat{G}G^W)_{\i\j} \prec  (\im \widehat{G}_{\i\i} \im G^W_{\j\j})^{1/2}\,.
\]
As $\sigma_1\asymp f$, we can use spectral decomposition and \eqref{1113} -- \eqref{ev2} to show that
\begin{equation}\label{oasis}
\im \widehat{G}_{\i\i} \prec f^{-1}+\im \ul{\widehat{G}} \prec f^{-1}+(N\eta)^{-1}+|m|\prec  f^{-1}+(N^{1-\delta}\eta)^{-1} \,.
\end{equation}
Together with $\im G^W_{\j\j}\prec (N^{1-\delta}\eta)^{-1}$ we get 
\begin{equation}\label{oasis2}
	\eta (\widehat{G}G^W)_{\i\j} \prec f^{-1}+(N^{1-\delta}\eta)^{-1}\,.
\end{equation}
Inserting \eqref{oasis} and \eqref{oasis2} into \eqref{12345} we get
\[
(f-\bar{w})\bb E (\widehat{G}G^W)_{11'} \prec \eta^{-1}\big(f^{-1}+(N^{1-\delta}\eta)^{-1}\big)\,.
\]
Similarly, $(f-{w})\bb E (\widehat{G}G^W)_{1'1} \prec \eta^{-1}\big(f^{-1}+(N^{1-\delta}\eta)^{-1}\big)$. Inserting the above two estimates into \eqref{5.222}, and together with $|w|\leq 2$ and $f \asymp q$, we get \eqref{6.12} as desired. The proof of \eqref{5.19} follows in a similar fashion; we omit the details.
\end{proof}

\subsection{Proof of Theorem \ref{main theorem}}
In this section we prove the following result, which obviously implies Theorem \ref{main theorem}.

\begin{proposition} \label{prop5.5}
	Fix $k \in \bb N_+$ and $w_1,...,w_k\in \bb C$ with $|w_j|=1$ for all $j\in \{1,2,...,k\}$. Let $f_1,...,f_k :\bb C \to \bb C$ be smooth and compactly supported, independent of $N$, and set
	\[
	f_{j,w_j}(w)\deq N f_j(\sqrt{N}(w-w_j))
	\]
	for all $j \in \{1,2,...,k\}$. Let $\lambda_1,...,\lambda_N$ be the eigenvalues of $A$, and $\mu_1,...,\mu_N$ be the eigenvalues of $W$. Then
	\begin{equation*}
		\begin{aligned}
	\bb E \Bigg[ \prod_{j=1}^k \bigg(\frac{1}{N}\sum_{i=1}^N&f_{j,w_j}(\lambda_i)-\frac{1}{\pi}\int_{|w|\leq 1}f_{j,w_j}(w)\dd^2 w\bigg)\\
	&-\prod_{j=1}^k \bigg(\frac{1}{N}\sum_{i=1}^Nf_{j,w_j}(\mu_i)-\frac{1}{\pi}\int_{|w|\leq 1}f_{j,w_j}(w)\dd^2 w\bigg)\Bigg]=O(N^{-c})
		\end{aligned}
	\end{equation*}
for some constant $c\equiv c(\xi,k,f_1,...,f_k)>0$. 
\end{proposition}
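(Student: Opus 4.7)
The plan is to apply Girko's Hermitization formula \eqref{1.5} to each factor and reduce everything to a comparison of the Stieltjes transforms of $\widetilde H_w(t)$ along a matrix flow from $A$ to the real Ginibre $W$. Writing
\[
X^S_j \deq \frac{1}{N}\sum_{i=1}^N f_{j,w_j}(\lambda_i^S) - \frac{1}{\pi}\int_{|w|\leq 1}f_{j,w_j}(w)\,\dd^2 w
\]
for $S\in\{A,W\}$ with eigenvalues $\lambda_i^S$, the formula \eqref{1.5} presents $X_j^S$ as a double integral of $\ul{\widetilde G}^S_w(\ii\eta) - m(w,\ii\eta)$ against $\nabla^2 f_{j,w_j}(w)\,\dd\eta\,\dd^2 w$. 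Telescoping $\prod_j X_j^A - \prod_j X_j^W$ and applying H\"older then reduces the statement to bounds on $\bb E[X_j^A - X_j^W]$ and $\bb E|X_j^A - X_j^W|^2$, since higher moments of individual $X_j^\bullet$ are controlled by Corollary \ref{cor4.2} and its Ginibre analogue.

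I would split the $\eta$-integral at the critical threshold $\eta_0 \deq N^{-3/4+\delta}$. On the window $[N^{-1+\delta},\eta_0]$ the strong local laws from Lemma \ref{lemma4.8} (together with Lemma \ref{lemma 5.2} for the intermediate flow models) supply the optimal estimate $\ul{\widetilde G}-m\prec(N\eta)^{-1}$, which combined with $\|\nabla^2 f_{j,w_j}\|_1 = O(N)$ leaves enough slack to absorb the comparison errors described below. For $\eta \geq \eta_0$ the pointwise bound $\ul{\widetilde G}-m = O(1)$ costs a whole factor of $N$, so I would invoke the integration-by-parts identity \eqref{1.10} to replace $\nabla^2 f_{j,w_j}$ by $\partial_{\bar w}f_{j,w_j}$, whose $L^1$-mass is only $O(N^{1/2})$; this converts the large-$\eta$ contribution to a boundary term involving $\sum_i \widetilde G_{i+N,i}(\ii\eta_0)$, for which Proposition \ref{prop5.1} furnishes the sharpened estimate needed.

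The comparison proceeds in two stages along the matrix flow from $A(0)=A$ to $A(\infty)=W+f\b e\b e^*$. First, Lemma \ref{lemma5.3} (encoded in \eqref{7.15}--\eqref{7.16}) interpolates from $A(0)$ to $A(\infty)$. Since $W$ is orthogonally invariant, $A(\infty)$ shares its spectral distribution with $\widehat W = W + f\b e_1\b e_1^*$, so the analysis transfers to the Green function $\widehat G$ from \eqref{astronaut}. Lemma \ref{lemma5.4} then compares $\widehat W$ with $W$ via its rank-one perturbation through \eqref{6.12} and \eqref{5.19}. Inserting all of these expectation and second-moment bounds into the split Girko integral and evaluating at the critical scale $\eta_0$ yields a polynomial gain $N^{-c}$ for some $c = c(\xi,k)>0$, completing the proof.

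The main obstacle I anticipate lies in the large-$\eta$ part of the Girko integral: because $\ul{\widetilde G}-m$ is only $O(1)$ rather than $o(1)$ there, the naive estimate loses a factor of $N$, and the only way to recover is through the integration-by-parts representation \eqref{1.10} combined with the sharpened estimate of Proposition \ref{prop5.1}. A second delicate point is that the product structure forces the use of second-moment bounds for the Green function differences along the flow (not just expectations), which is precisely why both \eqref{7.16} and \eqref{5.19} are needed; tracing the constants through the flow so that these second moments behave essentially as squares of the first-moment bounds is the most technical part of the comparison argument.
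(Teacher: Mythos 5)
Your outline reproduces the paper's skeleton — Girko's formula, a split of the $\eta$-integral around the critical scale $N^{-3/4}$, the integration-by-parts/boundary-term treatment of large $\eta$ via Proposition \ref{prop5.1}, and the two-stage flow comparison using \eqref{7.15}--\eqref{7.16} and \eqref{6.12}, \eqref{5.19} on the critical window — but it has a genuine gap: you never treat the regime $\eta$ below your lower cutoff $N^{-1+\delta}$ (in the paper, below $\eta_*=N^{-3/4-\delta}$). Girko's formula integrates $\eta$ down to $0$, and this portion is governed by the smallest singular values of $\widetilde H_w$, i.e.\ by eigenvalues $\lambda_i$ that may be extremely close to $w$; no local law applies there, and Corollary \ref{cor4.2} certainly does not control it, so even your preliminary claim that higher moments of the individual $X_j$'s are bounded already breaks at this point. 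The paper needs two separate inputs here: first, the least-singular-value (small ball) estimate \eqref{345} from \cite{TV08}, implemented through a net $K$ and the exclusion set $U$, merely to get the a priori bound $|T_{j,1}|\prec 1$ in Lemma \ref{lemma5.6}; second, and more importantly, to obtain the actual smallness $\bb E|T_{j,1}|\prec N^{-\delta/2}$ one must bound the \emph{expected number} of singular values of $\widetilde H_w$ below $N^{\delta/2}\eta_*$. The local laws only give $N\eta_*\,\bb E\im\ul{\G}(\ii N^{\delta/2}\eta_*)=O(1)$ for this count, which is useless; the paper gets $o(1)$ by transferring $\bb E\im\ul{\G}$ at that scale to the Ginibre reference via \eqref{7.15} and \eqref{6.12} and then quoting the known Ginibre edge estimate \cite[(28)]{CEK3}. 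This extra comparison at a fixed sub-critical scale, together with the small-ball input, is an essential ingredient that your plan omits, and without it the claimed $N^{-c}$ bound cannot be reached.

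A secondary issue: the paper deliberately takes the comparison window to be the narrow interval $[\eta_*,\eta^*]=[N^{-3/4-\delta},N^{-3/4+\delta}]$, not $[N^{-1+\delta},N^{-3/4+\delta}]$ as you propose. The second-moment flow bounds \eqref{7.16} and \eqref{5.19} deteriorate like inverse powers of $\eta$, and integrating their square roots down to $\eta\sim N^{-1+\delta}$ makes the $(N^{3-\delta}\eta^3)^{-1}$ term from \eqref{5.19} at best borderline; the whole point of the paper's three-regime decomposition is that the flow comparison is only invoked where it is efficient (near the critical scale), while the sub-$\eta_*$ regime is handled by the eigenvalue-counting argument described above and the super-$\eta^*$ regime by the boundary-term identity you correctly identified.
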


Our starting point is Girko's Hermitization formula \cite{Gir84}, which reads
\begin{equation} \label{Gir}
\frac{1}{N}\sum_i f_{j,w_j}(\lambda_i)=\frac{1}{2\pi N}\sum_i\int_{\bb C}  \nabla^2 f_{j,w_j}(w) \log |\lambda_i-w|\, \dd^2 w\,,
\end{equation}
and
\begin{equation} \label{Herm}
	\sum_i \log |\lambda_i-w|=\frac{\ii}{2}\int_0^{\infty} \tr \widetilde{G}_{w}(\ii \eta) \dd \eta\,.
\end{equation}
We would like to replace the integral on RHS of \eqref{Gir} by a Riemann sum. To this end, fix $\delta\in (0,\xi/100)$, and let $K$ be an $N^{-10}$-net of the ring $\{w\in \bb C: ||w|-1|\leq N^{-1/2+\delta}\}$.  Let $U\deq \cup_i \cal B_{\lambda_i} (N^{-5})$, where $\cal B_z(r)$ denotes the ball centered at $z$ with radius $r$. As $\|\nabla^2 f_{j,w_j}(w)\|_\infty=O(N^2)$, we have
	\begin{align}
		&\,\frac{1}{2\pi N}\sum_i\int_{\bb C }  \nabla^2 f_{j,w_j}(w) \log |\lambda_i-w|\, \dd^2 w-\frac{1}{2\pi N^{21}}\sum_i  \sum_{w\in K\cap U^c} \nabla^2 f_{j,w_j}(w) \log |\lambda_i-w|\nonumber\\
		=&\, \frac{1}{2\pi N}\sum_i\int_{\bb C\cap U^c}  \nabla^2 f_{j,w_j}(w) \log |\lambda_i-w|\, \dd^2 w\label{5.322}\\
		&-\frac{1}{2\pi N^{21}}\sum_i  \sum_{w\in K\cap U^c} \nabla^2 f_{j,w_j}(w) \log |\lambda_i-w|+O(N^{-6})=O(N^{-6})\nonumber\,,
	\end{align}
where in the last step we used $\log |\lambda_i-w|-\log |\lambda_i-w'|=O(N^{5}|w-w'|)$ for all $w,w'\in K\cap U^c$, and
\begin{equation} \label{count}
	|K\cap U^c\cap \supp f_{j,w_j}|=O(N^{19})\,.
\end{equation}
By \eqref{Gir}, \eqref{Herm} and \eqref{5.322} we get
\begin{equation*}
	\begin{aligned}
		\frac{1}{N}\sum_i f_{j,w_j}(\lambda_i)&=\frac{1}{2\pi N^{21}}\sum_i  \sum_{w\in K\cap U^c} \nabla^2 f_{j,w_j}(w) \log |\lambda_i-w|+O(N^{-6})\\
		&=\frac{\ii }{4\pi N^{21}} 
		\sum_{w\in K\cap U^c} \int_0^{\infty} \nabla^2 f_{j,w_j}(w) \tr \G_w(\ii \eta) \dd \eta+O(N^{-6})\,.
	\end{aligned}
\end{equation*}
As a result
\begin{equation}  \label{6.4}
\frac{1}{N}\sum_i f_{j,w_j}(\lambda_i)-\int_{|w|\leq 1} f_{j,w_j}(w) \dd^2 w=	T_{j,1}+T_{j,2}+T_{j,3}+O(N^{-6})\,,
\end{equation}
where 
\[
T_{j,1}= \frac{\ii}{2\pi N^{20}}\sum_{w\in K\cap U^c} \int_0^{\eta_*} \nabla^2 f_{j,w_j}(w) \big(\ul{\G}_w(\ii\eta) - m(w,\ii\eta)\big)  \dd \eta
\]
\[
T_{j,2}= \frac{\ii}{2\pi N^{20}}\sum_{w\in K\cap U^c}  \int_{\eta_*}^{\eta^*} \nabla^2 f_{j,w_j}(w) \big(  \ul{\G}_w(\ii\eta) - m(w,\ii\eta)\big)  \dd \eta
\]
\[
T_{j,3}= \frac{\ii}{2\pi N^{20}}\sum_{w\in K\cap U^c}  \int_{\eta^*}^{\infty} \nabla^2 f_{j,w_j}(w) \big(  \ul{\G}_w(\ii\eta) - m(w,\ii\eta)\big)  \dd \eta
\]
with $\eta_*=N^{-3/4-\delta}$, $\eta^*=N^{-3/4+\delta}$, and fixed $\delta \in (0,\xi/100)$. 

The following lemma gives prior estimates for the RHS of \eqref{6.4}.

\begin{lemma} \label{lemma5.6}
We have
	\begin{equation} \label{6.5}
		|T_{j,1}|+ |T_{j,2}| \prec 1
	\end{equation}
and 
\begin{equation} \label{6.6}
	T_{j,3} \prec N^{-\delta}
\end{equation}
uniformly in $j$.
\end{lemma}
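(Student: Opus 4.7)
The plan is to treat the three pieces separately: the strong local law from Corollary \ref{cor4.2}(ii) handles $T_{j,2}$ and the bulk of $T_{j,1}$, a monotonicity argument plus the Tao--Vu small-ball bound disposes of the very small $\eta$ tail of $T_{j,1}$, and the integration-by-parts trick from \eqref{1.10} combined with Proposition \ref{prop5.1} controls $T_{j,3}$. In each case, the common prefactor $N^{-20}\cdot|K\cap U^c\cap\supp f_{j,w_j}|\cdot\|\nabla^2 f_{j,w_j}\|_\infty$ is $O(N)$ by \eqref{count} and $\|\nabla^2 f_{j,w_j}\|_\infty=O(N^2)$, so each $T_{j,r}$ reduces to bounding $N\int|\ul{\G}_w-m|\,d\eta$ uniformly in $w$ over the relevant range.

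For $T_{j,2}$, the range $[\eta_*,\eta^*]$ lies inside $\b S_\delta^{(1)}\cup\b S_\delta^{(2)}$, so Corollary \ref{cor4.2}(ii) (applied uniformly in $w\in K$ via a union bound over the polynomially many points) gives $|\ul{\G}_w-m|\prec 1/(N\eta)$; integrating yields $2\delta(\log N)/N$, which multiplied by $N$ is $\prec 1$. For $T_{j,1}$ the same bound handles $[N^{-1+\delta},\eta_*]$ identically. On $[\eta_{**},N^{-1+\delta}]$ with $\eta_{**}$ chosen below, I exploit the monotonicity of $\eta\mapsto\eta\,\im\ul{\G}_w(\ii\eta)$ together with $\im\ul{\G}_w(\ii N^{-1+\delta})\prec|m(\ii N^{-1+\delta})|+N^{-\delta}\prec N^{-\min(\delta,1/4)+o(1)}$; this yields $\int|\ul{\G}_w|\,d\eta\prec N^{-1+o(1)}$, and $\int|m|\,d\eta\leq C\eta_0^{4/3}$ by Lemma \ref{lemma4.11}(iv) is even smaller. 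Finally on $(0,\eta_{**}]$ I invoke the Tao--Vu small-ball estimate \cite{TV08} uniformly over $w\in K$ (by a union bound, harmless since $|K|\leq N^{O(1)}$) to get $\sigma_{\min}(A-w)\geq N^{-C_0}$ with very high probability; then $|\ul{\G}_w(\ii\eta)|\leq\eta/\sigma_{\min}^2\leq\eta N^{2C_0}$, so choosing $\eta_{**}=N^{-C_0-10}$ makes $\int_0^{\eta_{**}}|\ul{\G}|\,d\eta\leq N^{-20}$, which is negligible after multiplication by $N$.

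For $T_{j,3}$ I implement the integration-by-parts idea sketched in \eqref{1.10}. First, replace the Riemann sum over $K$ by the Lebesgue integral over $\bb C$; the discretization error is $O(N^{-6})$ since $\nabla^2 f_{j,w_j}$ is $N^2$-Lipschitz, $K$ is an $N^{-10}$-net, and $U$ has area $O(N^{-9})$. Then $\nabla^2=4\partial_w\partial_{\bar w}$ and integration by parts in $w$ on $\bb C$ (boundary terms vanish by compact support of $f_{j,w_j}$) reduce the task to bounding
\[
\int\partial_{\bar w}f_{j,w_j}(w)\cdot\int_{\eta^*}^\infty\partial_w(\ul{\G}_w-m)(\ii\eta)\,d\eta\,d^2w.
\]
Using $\partial_w\widetilde H_w=-E$ (with $E_{i,i+N}=1$ and all other entries zero) together with $\widetilde G^2=-\ii\partial_\eta\widetilde G$ gives $\partial_w\ul{\G}_w(\ii\eta)=-(\ii/(2N))\partial_\eta\sum_i\widetilde G_{i+N,i}(\ii\eta)$, and in the limit $\partial_w m=-(\ii/2)\partial_\eta(\bar w\,\mathfrak m(w,\ii\eta))$. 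The $\eta$-integral telescopes (the boundary at $\infty$ vanishes since $\widetilde G(\ii\eta)\to 0$ and $\mathfrak m(w,\ii\eta)\to 0$), yielding
\[
\int_{\eta^*}^\infty\partial_w(\ul{\G}_w-m)(\ii\eta)\,d\eta=\frac{\ii}{2N}\Bigl(\sum_i\widetilde G_{i+N,i}(\ii\eta^*)-N\bar w\,\mathfrak m(w,\ii\eta^*)\Bigr).
\]
Combining Proposition \ref{prop5.1} with the identity $\bar w\,\mathfrak m=-(1+m^2)/w+\ii\eta m/w$ and the edge estimate $|\eta^* m(w,\ii\eta^*)|\prec N^{-1+4\delta/3}$, this quantity is $\prec N^{-1/2-2\delta/3}$ uniformly in $w$; since $\int|\partial_{\bar w}f_{j,w_j}|\,d^2w=O(N^{1/2})$, I obtain $T_{j,3}\prec N^{-2\delta/3}$, which is the claimed $\prec N^{-\delta}$ upon choosing the $\delta$ of the lemma to be a correspondingly smaller constant.

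The main technical subtlety is the intermediate range $[\eta_{**},N^{-1+\delta}]$ in $T_{j,1}$, where the local law does not apply and the naive bound $|\ul{\G}|\leq 1/\eta$ gives a logarithmic divergence; the monotonicity of $\eta\,\im\ul{\G}$, together with the cusp behaviour $|m|\asymp\eta^{1/3}$ at the edge, is what rescues the integrability. A secondary point in $T_{j,3}$ is that one must telescope against the exact $\eta$-primitive $\bar w\,\mathfrak m$ rather than the cruder $-(1+m^2)/w$ estimated in Proposition \ref{prop5.1}: the discrepancy $\ii\eta m/w$ is real but at $\eta=\eta^*$ it is subdominant to the fluctuation bound.
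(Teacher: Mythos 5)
Your proof is correct and follows essentially the same route as the paper's: the local law of Corollary \ref{cor4.2} (ii) (extended to small $\eta$ by monotonicity) handles $T_{j,2}$ and the bulk of $T_{j,1}$, the least-singular-value bound of \cite{TV08} over the net $K$ handles the tiny-$\eta$ tail of $T_{j,1}$, and integration by parts in $w$ plus telescoping in $\eta$ against $\bar w\frak m=u$ together with Proposition \ref{prop5.1} gives $T_{j,3}$ (your exponent $N^{-2\delta/3}$ is indeed what the numerology yields, and suffices for the later use of the lemma). One small repair: \cite{TV08} does not provide a single constant $C_0$ with $\sigma_{\min}(A-w)\ge N^{-C_0}$ holding with \emph{very high} probability — the exponent must grow with the target probability exponent $D$ — so either pick $C_0=C_0(D)$ and $\eta_{**}=N^{-C_0(D)-10}$ separately for each fixed $D$ (harmless, as your intermediate-range logarithm then only becomes $O_D(\log N)$), or use the paper's threshold $N^{-\log N}$ on the event $\Sigma$.
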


\begin{proof}
(i) Using Corollary \ref{cor4.2} (ii) and a deterministic monotonicity argument (see e.g.\,\cite[Section 10]{BK16}), we have
\begin{equation} \label{5.344}
	\ul{\G}-m \prec \frac{1}{N\eta}
\end{equation}	
for all $|1-|w||\leq N^{-1/2+\delta}$ and $0<\eta\leq N^{-3/4+\delta}$. Together with \eqref{count} we have
\begin{equation*}
	T_{j,2} \prec \frac{1}{N^{20}}\sum_{w\in K\cap U^c} \int_{\eta_*}^{\eta^*}|\nabla^2 f_{w_j}(w) |\frac{1}{N\eta} \dd \eta \prec 1\,.
\end{equation*}
To estimate $T_{j,1}$, first note that by \eqref{6.4} and a triangle inequality, we have the deterministic bound $|T_{j,1}|=O(N^{2})$. Applying \cite[Theorem 2.9]{TV08} with $\rho=p/(1-p)$, $x\overset{d}{=}\mbox{Bern}(1-p)$ and $M=-w\sqrt{(1-p)}I$, we have
\begin{equation} \label{345}
\mathbb P\Big(\max_{w\in K}\|\widetilde{H}_w^{-1}\|\geq N^{\log N}\Big)=O (N^{-D})
\end{equation}
for any fixed $D>0$. By \eqref{5.344}, we know that on the event $\Sigma\deq \{\max_{w\in K}\|\widetilde{H}_w^{-1}\|< N^{\log N} \}$ we have
\begin{equation}
	\begin{aligned}
T_{j,1} &\prec  \frac{1}{ N^{18}}\sum_{w\in K\cap U^c}\bigg|\int_0^{N^{-\log N}} \ul{\G}_w - m(w,\ii \eta)\, \dd \eta\bigg|+1\prec N\max_{w\in K\cap U^c} \bigg|\int_0^{N^{-\log N}} \ul{\G}_w \, \dd \eta\bigg|+1\\
&\prec  \max_{w\in K\cap U^c} \sum_{|\sigma_{i,w}|\geq N^{-\log N}}\log\bigg(1+\frac{1}{\sigma_{i,w}^2N^{2\log N}}\bigg)+1\prec 1\,,
	\end{aligned}
\end{equation}
where $\sigma_{i,w}$ are the eigenvalues of $\widetilde{H}_w$. This finishes the proof of \eqref{6.5}.
	
(ii) Note that by the trivial bound $\max_{\i\j}\|\G_w(\ii \eta)_{\i\j}\|\leq \eta^{-1}$, we have
\begin{align*}
	\int_{\eta^*}^{\infty} \nabla^2 f_{j,w_j}(w) \big(  \ul{\G}_w(\ii\eta) - m(w,\ii\eta)\big)  \dd \eta-\int_{\eta^*}^{\infty} \nabla^2 f_{j,w_j}(w') \big(  \ul{\G}_{w'}(\ii\eta) - m(w',\ii\eta)\big)  \dd \eta\\ =O\big(|w-w'|\big(N^2/\eta^*+N^{5/2}\big)\big)=O(N^3|w-w'|)
\end{align*}
for all $w,w'\in K$. As a result, we can recover an integration from the Riemann sum in $T_{j,3}$, i.e.
\[
	T_{j,3}= \frac{\ii}{2\pi }\int_{\bb C}  \int_{\eta^*}^{\infty} \nabla^2 f_{w_j}(w) \big(  \ul{\G} - m\big)  \dd \eta\, \dd^2 w+O(N^{-6})
\]
 Let $u\deq - \bar{w}m /(\ii \eta+m)$, and thus $1+\ii \eta m+m^2+wu=0$. It is east to check that
\begin{equation*}
	\partial_w m=\frac{\ii}{2}\partial_\eta u
	\quad 
	\mbox{and}
	\quad
	\partial_w \ul{\G}=\frac{1}{2N}\sum_{i}(\G^2)_{i'i}=\frac{\ii}{2N} \sum_{i}\partial_\eta \G_{i'i}\,.
\end{equation*}
Together with integration by parts, we get
	\begin{equation} \label{5.28}
		\begin{aligned}
			T_{j,3}&=-\frac{2 \ii}{\pi }\int_{\bb C}  \int_{\eta^*}^{\infty} \partial_{\bar w} f_{w_j}(w) \big(  \partial_w\ul{\G} - \partial_w m\big)  \dd \eta\, \dd^2 w+O(N^{-6})\\
			&=\frac{1}{\pi}\int_{\bb C}  \int_{\eta^*}^{\infty}  \partial_{\bar w} f_{w_j}(w)\Big(\frac{1}{N}\sum_{i}\partial_\eta \G_{i'i}-\partial_\eta u \Big)\dd \eta\, \dd^2w+O(N^{-6})\\
			&=-\frac{1}{\pi} \int_{\bb C} \partial_{\bar w} f_{w_j}(w)\Big(\frac{1}{N}\sum_i\G_{i'i}(\ii\eta^*)-u(\ii\eta^*) \Big)\dd^2 w+O(N^{-6})\,.
		\end{aligned}
	\end{equation}
By Proposition \ref{prop5.1} and $u(\ii\eta^*)=-(1+m^2)/w+O(\eta^*)$, we have
\[
T_{j,3} \prec N^{-1/2-\delta}\int_{\bb C} |\partial_{\bar w} f_{w_j}(w)|\dd^2 w +N^{-6} \prec N^{-\delta}\,.
\]
This finishes the proof of \eqref{6.6}.
\end{proof}

\begin{proof}[Proof of Proposition \ref{prop5.5}]
Now we deduce Proposition \ref{prop5.5} from Lemmas \ref{lemma 5.2} -- \ref{lemma5.4} and \ref{lemma5.6}. A similar strategy was used in \cite{CEK3}.

\textit{Step 1.} We first show that
\begin{equation} \label{T1}
	\bb E |T_{j,1}| \prec N^{-\delta/2}\,.
\end{equation}
Let $\Sigma\deq \{\max_{w\in K}\|\widetilde{H}_w^{-1}\|< N^{\log N} \}$, and \eqref{345} shows $\bb P(\Sigma^c)=O(N^{-D})$ for any fixed $D>0$. Thus by \eqref{um} we know that for all $w\in K$,
\begin{equation*}
	\begin{aligned}
		&\,\bigg|\b 1_\Sigma\int_0^{\eta_*}  \ul{\G}_w(\ii\eta) - m(w,\ii\eta)  \dd \eta\bigg| \\
		=&\,\,O(N^{-1})\b 1_\Sigma\sum_{|\sigma_{i,w}|\leq N^{-\log N}} \log \Big(1+\frac{\eta_*^2}{\sigma_{i,w}^2}\Big)+O(N^{-1})\sum_{|\sigma_{i,w}|\geq N^{-\log N}}\log \Big(1+\frac{\eta_*^2}{\sigma_{i,w}^2}\Big)+\bigg|\int_0^{\eta_*} m(w,\ii\eta)\dd \eta \bigg|\\
		=&\, O(N^{-1})\sum_{|\sigma_{i,w}|\geq N^{-\log N}}\log \Big(1+\frac{\eta_*^2}{\sigma_{i,w}^2}\Big)+O(N^{-1-\delta})\,,
	\end{aligned}
\end{equation*}
where $\sigma_{i,w}$ denotes the eigenvalues of $\widetilde{H}_w$, and in the second step we used \eqref{um}. By \eqref{6.4}, we have the deterministic bound $|T_{j,1}|=O(N^{2})$. Hence
\begin{equation} \label{5.30}
\hspace{-0.2cm}	\bb E |T_{j,1}| =\bb E |\b 1_\Sigma \,T_{j,1}|+O(N^{-1})\prec \frac{1}{N^{19}}
\sum_{w\in K\cap U^c } \sum_{|\sigma_{i,w}|\geq N^{-\log N}}\bb E\log \Big(1+\frac{\eta_*^2}{\sigma_{i,w}^2}\Big)\dd w+O(N^{-\delta})\,.
\end{equation}
Observe that
\begin{align}
&\quad \sum_{|\sigma_i|\geq N^{-\log N}}\bb E\log \Big(1+\frac{\eta_*^2}{\sigma_i^2}\Big)\prec \bb E|\{i:|\sigma_i|\leq N^{\delta/2}\eta_*\}|+\bb E \sum_{|\sigma_i|\geq N^{\delta/2}\eta_*} \frac{\eta_*^2}{\sigma_i^2}\nonumber\\
&\prec \bb E|\{i:|\sigma_i|\leq N^{\delta/2}\eta_*\}|+\bb E \sum_{|\sigma_i|\geq N^{\delta/2}\eta_*} \frac{\eta_*^2}{\sigma_i^2+(N^{\delta/2}\eta_*)^2}\label{7.30}\\
& \prec \bb E|\{i:|\sigma_i|\leq N^{\delta/2}\eta_*\}|+N^{1-\delta/2}\eta_*\bb E\im  \ul{\G}(\ii N^{\delta/2}\eta_*) \prec \bb E|\{i:|\sigma_i|\leq N^{\delta/2}\eta_*\}|+N^{-\delta}\nonumber\,,
\end{align}
where in the last step we used Corollary \ref{cor4.2} (ii). By \eqref{7.15} and \eqref{6.12}, we get
\begin{equation*}
	\bb E|\{i:|\sigma_i|\leq N^{\delta/2}\eta_*\}| \leq N\eta_*\bb E \im \ul{\G}(\ii N^{\delta/2}\eta_*) (0)=N\eta_*\bb E \im \ul{\widehat{G}}(\ii N^{\delta/2}\eta_*) (0)+O(N^{-\delta})=O(N^{-\delta/2})\,,
\end{equation*}
where in the last step we used \cite[(28)]{CEK3}. Combining the above with \eqref{5.30} and \eqref{7.30}, we get \eqref{T1} as desired.

\textit{Step 2.} Similar as in \eqref{6.4} let us define
\[
\widehat{T}_{j,1}= \frac{\ii}{2\pi N^{21}}\sum_{w\in K\cap U^c_*} \int_0^{\eta_*} \nabla^2 f_{j,w_j}(w) \big(\ul{\widehat{G}}(0) - m\big)  \dd \eta
\]
\[
\widehat{T}_{j,2}= \frac{\ii}{2\pi N^{21}}\sum_{w\in K\cap U^c_*}  \int_{\eta_*}^{\eta^*} \nabla^2 f_{j,w_j}(w) \big(\ul{\widehat{G}}(0) - m\big)  \dd \eta
\]
\[
\widehat{T}_{j,3}= \frac{\ii}{2\pi N^{21}}\sum_{w\in K\cap U^c_*}  \int_{\eta^*}^{\infty} \nabla^2 f_{j,w_j}(w) \big(\ul{\widehat{G}}(0) - m\big)  \dd \eta\,,
\]
where $U_*\deq \cup_i \cal B_{\lambda^W_i} (N^{-5})$, and $\lambda_i^W$ are the eigenvalues of $W$. It is conventional to check that Lemma \ref{lemma5.6} and \eqref{T1} remain valid for $T_{j,1},...,T_{j,3}$ replaced by $\widehat{T}_{j,1},...,\widehat{T}_{j,3}$. By Lemma \ref{lemma5.6} and \eqref{T1}, we get
		\begin{align}
		\hspace{-0.1cm}	&\bb E \bigg[ \prod_{j=1}^k \bigg(\frac{1}{N}\sum_{i=1}^Nf_{j,w_j}(\lambda_i)-\frac{1}{\pi}\int_{|w|\leq 1}f_{j,w_j}(w)\dd^2 w\bigg)-\prod_{j=1}^k \bigg(\frac{1}{N}\sum_{i=1}^Nf_{j,w_j}(\mu_i)-\frac{1}{\pi}\int_{|w|\leq 1}f_{j,w_j}(w)\dd^2 w\bigg)\bigg]\nonumber\\
		&=\bb E \Big[\prod_{j=1}^k T_{j,2}-\prod_{j=1}^k \widehat{T}_{j,2}\Big]+O_{\prec}(N^{-\delta/2})\prec \sum_{j=1}^k\bb E |T_{j,2}-\widehat{T}_{j,2}| +N^{-\delta/2}\label{5.32}\,.
	\end{align}
As $\|\nabla^2 f_{j,w_j}\|_\infty=O(N^2)$, and $|U|,|U_*|=O( N^{10})$, for each $j$,
\begin{equation} \label{5.33}
	\begin{aligned}
	&\bb E |T_{j,2}-\widehat{T}_{j,2}| \prec  N \max_{w \in K}\int_{\eta_*}^{\eta^*}  \,\bb E |\ul{\G}(0)-\widehat{G}(0)| \dd \eta +N^{-1}\\
	\prec&\,N \max_{w \in K}  \int_{\eta_*}^{\eta^*}\bb E |\ul{\G}(0)-{\G}(\infty)| \dd \eta +N \max_{w \in K} \int_{\eta_*}^{\eta^*}  \bb E |\ul{\widehat{G}}(1)-\widehat{G}(0)| \dd \eta+N^{-1} \\
	\prec&\, N \max_{w \in K}  \int_{\eta_*}^{\eta^*} \bigg(\frac{1}{N\eta^{2/3}q}+\frac{1}{N^2\eta^2q}+\frac{1}{N^{3-\delta}\eta^3}\bigg)^{1/2} \dd \eta +N^{-1}\prec N^{-\delta/2}\,,
	\end{aligned}
\end{equation}
where in the third step we used \eqref{7.16} and \eqref{5.19}. Combining \eqref{5.32} and \eqref{5.33} we conclude the proof of Proposition \ref{prop5.5}.
\end{proof}

\subsection{Proof of Theorem \ref{radius and delocalization}}
The upper bound in \eqref{rigidity} was proved in Corollary \ref{cor4.7}, and Theorem \ref{radius and delocalization} (ii) was proved at the end of Section \ref{sec4.2}. Now we only need to prove the lower bound in \eqref{rigidity}. The statement follows directly from the following local circular law near the spectral edge.

\begin{theorem} \label{thmcircularlaw}
Fix $\delta \in(0,\xi/100)$, and let $f : \bb C \to \bb C $ be smooth and compactly supported, independent of $N$. Fix $a\in (1/2-\delta/2,1/2]$ and let $w_* \in \bb C$ satisfy $1-N^{-1/2+\delta/2}\leq |w_*|\leq \delta^{-1}$. Let $\lambda_1,\lambda_2,...,\lambda_N$ be the eigenvalues of $A$ and $f_{w_*}(w)\deq N^{2a}f(N^a(w-w_*))$. Then
\[
\frac{1}{N}\sum_i f_{w_*}(\lambda_i)-\frac{1}{\pi}\int_{|w|\leq 1} f_{w_*}(z) \dd^2w \prec N^{2a-1}\,. 
\]
\begin{proof}
The steps are essentially identical to those of Lemma \ref{lemma5.6}. We omit the details.
\end{proof}
\end{theorem}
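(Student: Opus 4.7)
My plan is to follow the blueprint laid out in the proof of Lemma~\ref{lemma5.6}, replacing the macroscopic scaling $N^{1/2}$ with the general scaling $N^a$ and keeping careful track of how the resulting volume/gradient factors interact with the target bound $N^{2a-1}$. The starting point is Girko's Hermitization formula
\[
\frac{1}{N}\sum_i f_{w_*}(\lambda_i)=\frac{\ii }{4\pi N}\int_{\bb C}\int_0^\infty \nabla^2 f_{w_*}(w)\,\tr \G_w(\ii \eta)\,\dd\eta\,\dd^2 w,
\]
combined with the deterministic identity $\frac{1}{\pi}\int_{|w|\le 1}f_{w_*}(w)\dd^2 w=\frac{\ii}{4\pi}\int_{\bb C}\int_0^\infty \nabla^2 f_{w_*}(w)\cdot 2N\,m(w,\ii \eta)\,\dd\eta\,\dd^2 w$. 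After approximating the $w$-integration by a Riemann sum on a net $K$ of size $N^{-10}$ contained in the support of $f_{w_*}$ (the error is negligible because $\|\nabla^2 f_{w_*}\|_\infty=O(N^{4a})$ and the integrand is $N^{O(1)}$-Lipschitz outside $N^{-5}$-neighborhoods of the $\lambda_i$), it remains to control
\[
T\deq \frac{\ii}{4\pi N^{21}}\sum_{w\in K\cap U^c}\int_0^\infty \nabla^2 f_{w_*}(w)\,\bigl(\ul{\G}_w(\ii\eta)-m(w,\ii\eta)\bigr)\,\dd\eta.
\]

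I would split $T=T_1+T_2+T_3$ along the cuts $\eta_*\deq N^{-1-\tau_0}$ and $\eta^*\deq N^{-3/4+\delta}$ for some tiny $\tau_0>0$. In the \emph{intermediate range} $[\eta_*,\eta^*]$, Corollary~\ref{cor4.2}(ii) together with a standard monotonicity argument (as in the derivation of \eqref{5.344}) yields $\ul{\G}_w-m\prec 1/(N\eta)$ uniformly for $w$ in a slightly enlarged spectral neighborhood of $w_*$ (which by the assumption $|w_*|\le \delta^{-1}$ and $|w_*|\ge 1-N^{-1/2+\delta/2}$ is covered by $\b S_\delta^{(1)}\cup\b S_\delta^{(2)}$); combined with $\|\nabla^2 f_{w_*}\|_1=O(N^{2a})$ this gives
\[
T_2\prec N^{2a}\int_{\eta_*}^{\eta^*}\frac{\dd\eta}{N\eta}\prec N^{2a-1}.
\]
For the \emph{small range} $[0,\eta_*]$, I would bound the integrand deterministically using $\|\widetilde{H}_w^{-1}\|\le N^{\log N}$ with overwhelming probability via the Tao–Vu small ball estimate \cite[Theorem 2.9]{TV08} (exactly as in \eqref{345}), reducing the contribution to $\sum_{|\sigma_{i,w}|\ge N^{-\log N}}\log(1+\eta_*^2/\sigma_{i,w}^2)$ times the volume factor $N^{2a}$; bounding the eigenvalue counting function by $N\eta_*\im\ul{\G}(\ii N^{\delta/2}\eta_*)\prec N^{-\tau_0/2}$ (using Corollary~\ref{cor4.2}(ii) again) gives $T_1\prec N^{2a-1}$.

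The \emph{large range} $[\eta^*,\infty)$ is where the integration-by-parts identity \eqref{1.10} becomes indispensable. Converting the Riemann sum back into an integral (the grid error is $O(N^{-6})$), I rewrite
\[
T_3=-\frac{1}{\pi}\int_{\bb C}\partial_{\bar w}f_{w_*}(w)\Bigl(\frac{1}{N}\sum_i \G_{i'i}(\ii \eta^*)-u(w,\ii\eta^*)\Bigr)\dd^2 w+O(N^{-6}),
\]
where $u=-\bar w m/(\ii\eta+m)$. Applying Proposition~\ref{prop5.1} at the spectral scale $\eta^*=N^{-3/4+\delta}$ yields
\[
\frac{1}{N}\sum_i\G_{i'i}(\ii\eta^*)-u(\ii\eta^*)\prec \frac{1}{N^2(\eta^*)^2}+\frac{1}{N(\eta^*)^{2/3}}\prec N^{-1/2+O(\delta)},
\]
and since $\|\partial_{\bar w}f_{w_*}\|_1=O(N^a)$, we obtain $T_3\prec N^{a-1/2+O(\delta)}\le N^{2a-1+O(\delta)}$ as long as $a\ge 1/2-O(\delta)$, which is exactly the hypothesis $a\in(1/2-\delta/2,1/2]$ (possibly after shrinking $\delta$).

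The main obstacle, as in Lemma~\ref{lemma5.6}, will be the small-$\eta$ piece $T_1$: the stochastic domination bound on $\ul{\G}-m$ deteriorates below $\eta\asymp N^{-1}$, so one cannot simply integrate the local law down to $0$. The trick is to combine the a-priori bound from the small ball probability with the sharp count of eigenvalues of $\widetilde{H}_w$ in windows of size $\eta_*$, exactly as in \eqref{7.30}; the only new ingredient is tracking the extra factor $N^{2a}$ coming from $\|\nabla^2 f_{w_*}\|_1$, and one must verify that the chosen cutoffs $\eta_*,\eta^*$ still allow the $N^{-\delta}$-type slack that drove the final step of Step~1 of the proof of Proposition~\ref{prop5.5}. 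Given the hypothesis $a\le 1/2$, this slack survives and the conclusion $T\prec N^{2a-1}$ follows.
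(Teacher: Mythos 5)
Your overall strategy — rerun the proof of Lemma~\ref{lemma5.6} with the scaling $N^a$ in place of $N^{1/2}$ and track the extra factors from $\|\nabla^2 f_{w_*}\|_1=O(N^{2a})$ and $\|\partial_{\bar w}f_{w_*}\|_1=O(N^a)$ — is exactly the paper's intent, and the decomposition into $T_1,T_2,T_3$ with the integration-by-parts treatment of $T_3$ via Proposition~\ref{prop5.1} is right. The gap is in the $T_1$ step. The claim that $N\eta_*\,\im\ul{\G}(\ii N^{\delta/2}\eta_*)\prec N^{-\tau_0/2}$ follows from Corollary~\ref{cor4.2}(ii) is not correct. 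Corollary~\ref{cor4.2}(ii) is only stated for $\eta\ge N^{-1+\delta}$, and with your $\eta_*=N^{-1-\tau_0}$ the scale $N^{\delta/2}\eta_*$ is below that. Even after extending via the monotonicity of $\eta\mapsto\eta\,\im\ul{\G}(\ii\eta)$, all one obtains is an $O_\prec(1)$ count: since $\eta\,\im\ul{\G}(\ii\eta)\le N^{-1+\delta}\,\im\ul{\G}(\ii N^{-1+\delta})\prec N^{-1}$, we have $|\{i:|\sigma_{i,w}|\le N^{\delta/2}\eta_*\}|\le 2N\cdot N^{\delta/2}\eta_*\,\im\ul{\G}(\ii N^{\delta/2}\eta_*)\prec 1$, and pushing the cutoff $\eta_*$ below $N^{-1}$ does not improve this — the local law simply does not resolve sub-$N^{-1}$ scales. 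A genuinely sub-unital count, $\bb E|\{i:|\sigma_i|\le N^{\delta/2}\eta_*\}|=O(N^{-\delta/2})$, is what Step~1 of the proof of Proposition~\ref{prop5.5} establishes, but that uses the Green function flow and rank-one comparisons \eqref{7.15}, \eqref{6.12} together with the Ginibre input \cite[(28)]{CEK3}, none of which you invoke.

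Fortunately, the sub-unital count is not needed here, and your arithmetic has a compensating omission: the spectral identity $\int_0^{\eta_*}\ul{\G}_w(\ii\eta)\,\dd\eta=\frac{\ii}{2N}\sum_i\log\big(1+\eta_*^2/\sigma_{i,w}^2\big)$ carries a $1/(2N)$ prefactor that disappeared from your bookkeeping. Once the count is $\prec 1$ and the tail is absorbed via $\sum_{|\sigma_i|>N^{\delta/2}\eta_*}\eta_*^2/\sigma_i^2\prec N^{-\delta}$, this prefactor gives $T_1\prec\|\nabla^2 f_{w_*}\|_1\cdot\frac{1}{N}\cdot O_\prec(1)\prec N^{2a-1}$, which is exactly how Lemma~\ref{lemma5.6} obtains $T_{j,1}\prec 1$ in the $a=1/2$ case. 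In particular the auxiliary parameter $\tau_0$ and the non-standard cutoff $\eta_*=N^{-1-\tau_0}$ are unnecessary; the paper's $\eta_*=N^{-3/4-\delta}$ works verbatim. Finally, the ``$N^{2a-1+O(\delta)}$'' bookkeeping in $T_3$ should be tightened: Proposition~\ref{prop5.1} at $\eta^*=N^{-3/4+\delta}$ gives a bound $\prec N^{-1/2-2\delta/3}$, so $T_3\prec N^a\cdot N^{-1/2-2\delta/3}\le N^{2a-1}$ directly from $a>1/2-\delta/2>1/2-2\delta/3$, with no loss and no need to shrink $\delta$.
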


\subsubsection*{Conflict of Interest}

Not Applicable.

	{\small
	
	\bibliography{bibliography} 
	
	\bibliographystyle{amsplain}
}

\end{document}